\def\@tocline#1#2#3#4#5#6#7{\relax
  \ifnum #1>\c@tocdepth %
  \else
    \par \addpenalty\@secpenalty\addvspace{#2}%
    \begingroup \hyphenpenalty\@M
    \@ifempty{#4}{%
      \@tempdima\csname r@tocindent\number#1\endcsname\relax
    }{%
      \@tempdima#4\relax
    }%
    \parindent\z@ \leftskip#3\relax \advance\leftskip\@tempdima\relax
    \rightskip\@pnumwidth plus4em \parfillskip-\@pnumwidth
    #5\leavevmode\hskip-\@tempdima
      \ifcase #1
       \or\or \hskip 1em \or \hskip 2em \else \hskip 3em \fi%
      #6\nobreak\relax
    \hfill\hbox to\@pnumwidth{\@tocpagenum{#7}}\par%
    \nobreak
    \endgroup
  \fi}
\DeclareRobustCommand{\SkipTocEntry}[5]{}
\newtheorem{theorem}{Theorem}
\newtheorem{prop}[theorem]{Proposition}
\newtheorem{corollary}[theorem]{Corollary}
\newtheorem{lemma}[theorem]{Lemma}
\newtheorem{problem}[theorem]{Problem}
\newtheorem{claim}[theorem]{Claim}
\theoremstyle{definition}
\newtheorem{definition}[theorem]{Definition}
\newtheorem{example}[theorem]{Example}
\theoremstyle{remark}
\newtheorem{remark}[theorem]{Remark}
\numberwithin{theorem}{subsection}
\numberwithin{equation}{section}
\theoremstyle{citeplain}
\DeclareMathOperator{\tr}{tr}
\DeclareMathOperator{\rk}{rk}
\DeclareMathOperator{\Hom}{Hom}
\DeclareMathOperator{\Sym}{Sym}
\DeclareMathOperator{\Spec}{Spec}
\DeclareMathOperator{\Proj}{Proj}
\DeclareMathOperator{\grad}{grad}
\DeclareMathOperator{\adj}{adj}
\DeclareMathOperator{\Pic}{Pic}
\DeclareMathOperator{\pf}{pf}
\def\CC{\mathbb{C}}
\def\QQ{\mathbb{Q}}
\def\RR{\mathbb{R}}
\def\ZZ{\mathbb{Z}}
\def\PP{\mathbb{P}}
\def\NN{\mathbb{N}}
\def\GG{\mathbb{G}}
\def\Z+{\mathbb{Z}_{\geq 0}}
\def\R+{\mathbb{R}_{\geq 0}}
\def\ot{\otimes}
\def\hook{\lrcorner}
\def\CQ{\mathcal{CQ}} %
\def\CS{\mathcal{CS}}
\def\ComplCol{\mathcal{CC}}
\def\Per{P}
\def\PPer{\mathcal{P}}
\def\fL{\mathcal{L}}
\def\fQ{\mathcal{Q}}
\def\fU{\mathcal{U}}
\def\ot{\otimes}
\newcommand\restr[2]{{%
		\left.\kern-\nulldelimiterspace %
		#1 %
		\vphantom{\big|} %
		\right|_{#2} %
}}
\title[Applications of intersection theory]{Applications of intersection theory:\\
from maximum likelihood to chromatic polynomials}
\author{Rodica Andreea Dinu, Mateusz Micha{\l}ek and Tim Seynnaeve}
\begin{document}
\begin{abstract}
Recently, we have witnessed tremendous applications of algebraic intersection theory to branches of mathematics, that previously seemed very distant. In this article we review some of them.
Our aim is to provide a unified approach to the results e.g.~in the theory of chromatic polynomials (work of Adiprasito, Huh, Katz), maximum likelihood degree in algebraic statistics (Drton, Manivel, Monin, Sturmfels, Uhler, Wi\'sniewski),  Euler characteristics of determinental varieties (Dimca, Papadima), characteristic numbers (Aluffi, Schubert, Vakil) and the degree of semidefinite programming (Bothmer, Nie, Ranestad, Sturmfels). Our main tools come from intersection theory on special varieties called the varieties of complete forms (De Concini, Procesi, Thaddeus) and the study of Segre classes (Laksov, Lascoux, Pragacz, Thorup).
\end{abstract}

\maketitle
\tableofcontents
\section{Introduction}

In a broad sense the aim of the article is to review applications of intersection theory. From the very beginning we are doomed to fail, as on the one hand there are too many applications even to fit in one book, on the other hand setting up correct foundations of intersection theory in detail is very technical and lengthy. Thus, we focus on very particular, recent applications, especially those that appear in disciplines not classically related to algebraic geometry. The reader interested in more technical aspects is referred to the great books \cite{FultonIntersection, eisenbud20163264}.

Our approach is not very different from what has happened in the history of intersection theory. Most mathematicians first enjoyed the ``applications" as classically done by Schubert, with the technical details only proved later (in fact, throughout the century). One of the reasons is that intersection theory often provides very intuitive, simple tools that give many concrete answers and lead to great results, however proving that the given methods always work may require tremendous work. Nowadays, most of the classical results are well-justified. Thus we hope the reader may enjoy the forthcoming journey, without worrying about the ``engine".

We will present results mainly about two invariants: chromatic polynomials of graphs and maximum likelihood (ML) degrees of linear concentration models \cite{sturmfels2010multivariate}. A priori, these objects are completely unrelated, apart from the fact that both: coefficients of the chromatic polynomial and ML-degrees are integers. The first aim is to interpret those integers in terms of intersection theory. Here, the approach is through projective degrees of rational maps. First we associate to our object (graph or statistical model) a subspace $\PP(L)$ of a projective space $\PP(V)$. The space $V$ depends on the type of the object (e.g.~number of edges of the graph) and the space $L$ precisely encodes the object. Further, the space $V$ comes to us with a distinguished polynomial $F$. The graph of the gradient of $F$ restricted to $\PP(L)$ turns out to be a variety that encodes a lot of the invariants of the object we started from. By performing intersection theory on that graph we are able to recover those invariants.

From the technical point of view, the graph of the gradient of $F$ is in most cases too singular to be able to effectively perform intersection theory on it. Fortunately, there exist resolutions of this graph with wonderful properties. The celebrated example, when our object is a graph (or more generally a matroid), was presented in \cite{huh2012milnor}, where $F$ is simply the product of the coordinates and the resolution is the permutohedral variety. In a greater generality we present various varieties of complete forms, which are precisely the resolutions we need.

We believe that the piece of mathematics presented in the review has advantages exceeding simply being beautiful. First, it gives a new point of view on important invariants. This in many cases allows a simpler computation of the invariants (although using sophisticated techniques!). Second, algebraic geometry provides a wealth of structural theorems, which reveal properties of the invariants, that were hard to see or prove otherwise. Last, using the presented techniques may lead to new invariants of the classical objects, that might be used e.g.~in classification problems.

The interplay of algebraic intersection theory and the study of invariants coming from different disciplines reveals new structure, that seems to be missed before. In classical intersection theory, we study one variety $X$, possibly with distinguished (classes of) subvarieties $Y_1,\dots,Y_k$, obtaining the integer invariants by intersecting the $Y_i$'s. The conjectures, coming e.g.~from algebraic statistics, concern sequences of numbers, when the variety $X$ changes. Still, as we will see, one can often obtain a nice, structured answer, concerning the invariants, by applying methods from enumerative geometry. This shift of interest from studying intersection theory on one $X$, to the study how the answer changes, when $X$ changes, is a part of the \emph{Bodensee program} which we introduce in the last Section~\ref{sec:Bodensee} of the article.

\section*{Acknowledgements}
We would like to thank Thomas Endler from the Max Planck Institute for Mathematics in the Sciences for helping us with the graphics.

\section{Gradients and geometry: general construction}
Let us consider a homogeneous polynomial $F\in S^dV^*$ of degree $d>0$ on the finite dimensional vector space $V$.
It partitions $V$ in sets $V_{F=\lambda}:=\{v\in V: F(v)=\lambda\}$.

\subsection{What is the gradient?}
Let us fix $v\in V$ with $F(v)=\lambda\in\CC$ and assume that $v$ is a smooth point of $V_{F=\lambda}$. The tangent space $T_v V_{F=\lambda}$ at $v$ is a codimension one affine subspace of $V$, which we identify with the (parallel) codimension one vector subspace $T_vF\subset V$ and hence a line $(T_vF)^\perp\subset V^*$. We note that scaling $v$ leads to the same line in the dual space.

We thus obtain a rational map:
\[\grad F:\PP(V)\dashrightarrow \PP(V^*).\]
The map is not defined precisely at those points $[v]\in \PP(V)$, where $v$ is a singular point of  $V_{F=F(v)}$.

In coordinates, i.e.~identifying $V\simeq \CC^{n+1}$, the map $\grad F$ is indeed the gradient of $F$, i.e.~is given by the partial derivatives $\frac{\partial F}{\partial x_0},\dots,\frac{\partial F}{\partial x_n}$. This follows from the fact that the normal vector to the tangent space to a hypersurface $V(F)$ is, by definition, the gradient of $F$, and the partial derivatives do not depend on the shift of $F$ by a constant. However, what is extremely important at this point is that the gradient map \emph{naturally} goes from a projective space to the \emph{dual} projective space. In fact, according to Dieudonn{\'e}: \emph{``Grassmann's greatest tragedy was that he did not make a distinction between a projective space and the dual projective space.''}

As the construction of the gradient map is of absolute importance for our survey, we will present one more, purely algebraic. The set of (fully) symmetric tensors $S^d V^*$ is canonically a linear subspace of partially symmetric tensors $V^*\otimes S^{d-1}V^*$. The linear inclusion:
\begin{equation} \label{eq:partiallySymmetricTensors}
	S^d V^*\hookrightarrow V^*\otimes S^{d-1}V^*
\end{equation}
equivalently gives the linear map:
\[S^{d-1}V \otimes S^dV^*\rightarrow V^*.\]
Hence, we have a bilinear map:
 \[\hook: S^{d-1}V \times S^dV^*\rightarrow V^*.\]
 The projectivisation of the linear map:
 \[\hook F: S^{d-1}V\rightarrow V^*\]
 precomposed with the $(d-1)$-st Veronese map $\PP(V)\rightarrow \PP(S^{d-1}V)$ is the gradient $\grad F$.
 \begin{example}
Let us consider a polynomial $F(x,y,z)=3x^2y+7z^3$ on $\CC^3$. Consider the point $v:=(-1,2,1)\in \CC^3$.

We have:
\[\grad F(-1,2,1)=(6xy,3x^2,21z^2)(-1,2,1)=[-12:3:21]\in\PP^2.\]

The map:
\[S^3(\CC^3)^*\rightarrow (\CC^3)^*\otimes S^2(\CC^3)^*\]
sends (up to a constant):
\[x^2y\mapsto x\otimes xy+x\otimes xy+y\otimes x^2,\quad z^3\mapsto 3 z\otimes z^2.\]
Further the second Veronese of $v$ is $v_2:=1e_1^2+4e_2^2+1e_3^2-4e_1e_2-2e_1e_3+4e_2e_3$. The pairing on $S^2\CC^3$ and $S^2(\CC^3)^*$ is given by $\langle e_1^2,x^2\rangle=2$ and $\langle e_1e_2,xy\rangle=1$. We obtain:
\[v_2\hook F=v_2\hook (6 x\otimes xy+3y\otimes x^2+21 z\otimes z^2)=6x\cdot(-4)+3y\cdot 2+21z\cdot 2.\]
Thus, we get $[-24x+6y+42z]\in \PP((\CC^3)^*)$, which coincides with the direct computation above.
 \end{example}

 Clearly, $\grad F$ has played a prominent role in geometry and algebra for hundreds of years. For example, the restriction of $\grad F$ to the variety $V(F)$ is essentially the Gauss map and the (closure of the) image of this restriction is the dual variety of $V(F)$.

\begin{remark}
In this survey we will focus on the gradient map. This could be generalized in many directions. The first one is to consider higher order derivatives. As we will see this will be often a good idea leading to resolutions of singularities of the graph of the gradient map.

Even more generally, many of the topics we describe could be implemented to rational maps between projective spaces. This however, would take us too far away from our original motivations. Our aim is to obtain nice, computable invariants of algebraic, geometric, or combinatorial objects that appear in many branches of mathematics.
\end{remark}

Of central importance will be the (closure of the) graph $\Gamma$ of our map $\grad F$:
\begin{equation}\label{eq:diagramGradF}
\begin{tikzcd}
& \Gamma\subset\PP(V)\times \PP(V^*) \arrow[ld,"\pi_1"'] \arrow[rd,"\pi_{n-1}"] &  \\
\PP(V) \arrow[rr,"\grad F"',dashed] & & \PP(V^*)
\end{tikzcd}
\end{equation}

Let us now fix a linear subspace $L$ of $V$. We assume that $\PP(L)$ is not contained in the indeterminacy locus of $\grad{F}$ and, in particular, we may consider the restriction $\restr{(\grad F)}{L}$ of the gradient map to $\PP(L)$.
Then we can make the following two constructions.

\subsection{First construction: restriction of the gradient}
 Let $D(\grad(F))\subset \PP(V)$ be the set where $\grad(F)$ is well-defined.
 We can consider the strict transform
 $$ \hat{L} := \overline{\pi_1^{-1}(\PP(L) \cap D(\grad(F)))} \subset \Gamma \subset \PP(V) \times \PP(V^*)$$
 of $\PP(L)$.

 We recall that the two basic invariants of a variety in a projective space are its dimension and degree, which are nonnegative integers \cite{NonlinearAlgebra}. Analogously to the degree, for a variety that is a subvariety of a product of two projective spaces we obtain a vector of numbers, known as \emph{bidegree} or \emph{multidegree}. Explicitly: given a $d$-dimensional variety $X\subset \PP^a\times\PP^b$, for $i=0,\dots, d$ we get the integer $\mu_i$ which is the number of points obtained by intersecting $X$ with $d-i$ general hyperplanes in $\PP^a$ and $i$ general hyperplanes in $\PP^b$ \cite[Section 8.5]{miller2004combinatorial}.

 In the Chow ring/cohomology ring $A(\PP^a \times \PP^b)$, we have
 \[
 [X]=\sum_{i=0}^{d}{\mu_i [\PP^{d-i} \times \PP^{i}]} \in A_{d}(\PP^a \times \PP^b).
 \]
 and
 \begin{equation} \label{eq:muIntersectionProduct}
 \mu_i = \int_{X}{H_1^{d-i} H_2^{i}},
 \end{equation}
 where $H_1 \in A^1(X)$ is the pullback of the hyperplane class in $\PP^a$ under $X \to \PP^a$, and similarly $H_2 \in A^1(X)$ is the pullback of the hyperplane class in $\PP^b$ under $X \to \PP^b$.

\begin{definition}
 The bidegrees of the variety $\hat{L}$ will be denoted by $\mu_i(L)$ for $i=0,\dots,\dim \hat{L}=\dim \PP(L)$. They depend on $F$ and on $L$.
\end{definition}

\emph{Assumption 1}: From now on we assume that the gradient is a generically finite map on $\PP(L)$.

\begin{remark}\label{isgenericof}
	If $\PP(L) \subset \PP(V)$ is generic of dimension $d$, then $\mu_i(L) = \mu_{i}(V)$, as they are both the bidegree of the variety obtained by intersecting $\Gamma$ with $n-i$ general hyperplanes in $\PP(V)$ and $i$ general hyperplanes in $\PP(V^*)$.
\end{remark}

Equivalently, one can consider the graph of the map $\restr{(\grad F)}{L}$:
\begin{equation*}
\begin{tikzcd}
& \Gamma'\subset\PP(L)\times \PP(V^*) \arrow[ld,"\pi_1'"'] \arrow[rd,"\pi_{n-1}'"] &  \\
\PP(L) \arrow[rr,"\restr{(\grad F)}{L}"',dashed] & & \PP(V^*)
\end{tikzcd}.
\end{equation*}
Then our numbers $\mu_i(L)$ are the projective degrees of $\restr{(\grad F)}{L}$, which are by definition the bidegrees of $\Gamma' \subset \PP(L) \times \PP(V^*)$ or degrees of the inverse image by $\restr{(\grad F)}{L}$ of a general projective subspace in $\PP(V^*)$ of a fixed dimension \cite[Definition 5.2]{cid2021mixed}. We stress that by the inverse image we mean the closure of all points in the domain \emph{on which the map is well defined} that map to the given set.

The final number $\mu_{\dim\hat{L}}(L)$ is equal to the product of the degree of $\grad(F)(L) \subset \PP(V^*)$ times the degree of the rational map $\restr{(\grad F)}{L}$. Indeed, let us fix a general subspace $S\subset \PP(V^*)$ of codimension equal to $\dim \hat L=\dim \grad(F)(L)$.
The number $\mu_{\dim\hat{L}}(L)$  is the cardinality of the set \[\{(x,y)\in \PP(L)\times \PP(V^*): \grad F(x)=y\text{ and }y\in S\}.\]
By definition of the degree we obtain $\deg \grad(F)(L)$ many points $y$ (which are general in $\grad(F)(L)$) and for each such $y$ we get $\deg \restr{(\grad F)}{L}$ many points $x$.

\emph{Assumption 2}: From now on we assume that the gradient is a generically one-to-one map on $\PP(L)$. In this case $\mu_{\dim\hat{L}}(L)$ is simply equal to the degree of $\grad(F)(L) \subset \PP(V^*)$.
This assumption will hold in all the examples we are interested in.

\subsection{Second construction: gradient of the restriction}

We can first restrict our map $F$ to $L$ and then take the gradient:
\begin{equation*}
\begin{tikzcd}
& \Gamma''\subset \PP(L)\times\PP(L^*) \arrow[ld,"\pi_1''"'] \arrow[rd,"\pi_{n-1}''"] &  \\
\PP(L) \arrow[rr,"\grad(\restr{F}{L})"',dashed] & & \PP(L^*)
\end{tikzcd}
\end{equation*}
\begin{definition} \label{def:nu}
We define $\nu_i(L)$ to be the projective degrees of $\grad(\restr{F}{L})$, which are by definition the bidegrees of $\Gamma'' \subset \PP(L) \times \PP(L^*)$.
The final number $\nu_{\dim \hat{L}}(L)$ is called the \emph{ML-degree} of $L$.
\end{definition}
Here ML stands for \emph{maximum likelihood}, a term very important in statistics. The connections to algebraic statistics will become apparent in Section~\ref{section:linearConcentrationsModels}. At this point we note that while $\Gamma'$ and $\hat L$ are naturally isomorphic, $\Gamma''$ is \emph{not}. Let us now explain the difference.

The upper triangle in the following diagram commutes by definition:
\begin{equation*}
\begin{tikzcd}[sep=huge]
 \PP(V) \arrow[rr,"\grad F",dashed]  & &\PP(V^*) \arrow[d,"\pi_{L^*}",dashed]    \\
\PP(L) \arrow[rr,"\grad(\restr{F}{L})"',dashed] \arrow[u,"\iota_L"] \arrow[rru,"\restr{\grad(F)}{L}",dashed] & & \PP(L^*)
\end{tikzcd}
\end{equation*}
Here $\pi_{L^*}$ is the projection from $L^{\perp}\subset V^*$ induced by the inclusion $L\subset V$.
\begin{lemma}
	We have: $\pi_{L^*} \circ \restr{(\grad F)}{L} = \grad(\restr{F}{L})$. In other words: the lower triangle in the above diagram also commutes.
\end{lemma}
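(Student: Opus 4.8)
The plan is to reduce the statement to the chain rule. By the coordinate description of the gradient given above, both maps $\restr{(\grad F)}{L}$ and $\grad(\restr{F}{L})$ are projectivizations of honest differentials: on the locus where it is defined, $\grad F$ sends $[v] \in \PP(V)$ to $[dF_v] \in \PP(V^*)$, where $dF_v \in V^*$ is the differential (equivalently, the gradient vector $(\partial F/\partial x_0(v), \dots, \partial F/\partial x_n(v))$), and likewise $\grad(\restr{F}{L})$ sends $[v]$ to $[d(\restr{F}{L})_v]$. On the other side, writing $\iota_L \colon L \hookrightarrow V$ for the linear inclusion (whose projectivization is the map $\iota_L$ of the diagram), the map $\pi_{L^*}$ is the projectivization of the dual map $\iota_L^* \colon V^* \to L^*$, i.e.~of the restriction of linear functionals from $V$ to $L$; its kernel is precisely $L^\perp$, which is why it is the projection from $\PP(L^\perp)$ as stated. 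Hence, after projectivizing, the lemma is equivalent to the linear-algebraic identity
\[ \iota_L^*(dF_v) = d(\restr{F}{L})_v, \qquad \text{equivalently} \qquad \restr{(dF_v)}{L} = d(\restr{F}{L})_v, \]
valid for every $v \in L$.

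First I would prove this identity by the chain rule. Since $\restr{F}{L} = F \circ \iota_L$ and $\iota_L$ is linear, its differential at every point equals $\iota_L$ itself, so for $v \in L$
\[ d(\restr{F}{L})_v = dF_{\iota_L(v)} \circ \iota_L = dF_v \circ \iota_L = \iota_L^*(dF_v) = \restr{(dF_v)}{L}. \]
This is exactly the desired equality of differentials, and projectivizing it gives $\pi_{L^*} \circ \restr{(\grad F)}{L} = \grad(\restr{F}{L})$ on the open subset of $\PP(L)$ where both sides are defined. As all three maps in the diagram are rational and $\PP(L)$ is assumed not to lie in the indeterminacy locus of $\grad F$, this common domain is nonempty and dense, and two rational maps that agree on a dense open set coincide.

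To keep the argument consistent with the algebraic description of the gradient via the contraction $\hook$, I would alternatively verify the identity in that language. There it suffices to show that for every $w \in S^{d-1}L \subset S^{d-1}V$ one has
\[ \restr{(w \hook F)}{L} = w \hook (\restr{F}{L}) \quad\text{in } L^*; \]
specializing to $w = v^{d-1}$ and projectivizing recovers the equality of maps. Pairing both sides against an arbitrary $u \in L$ and using the defining property of $\hook$ together with the fact that $\restr{F}{L}$ is the image of $F$ under the restriction $S^dV^* \to S^dL^*$ (adjoint to the inclusion $S^dL \hookrightarrow S^dV$), one gets
\[ \langle u, \restr{(w \hook F)}{L} \rangle = \langle u\cdot w, F \rangle = \langle u\cdot w, \restr{F}{L} \rangle = \langle u, w \hook (\restr{F}{L}) \rangle, \]
where the middle equality holds because $u\cdot w \in S^dL$. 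Since $u \in L$ is arbitrary, the two functionals agree.

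I do not expect a serious obstacle here; the content of the lemma is precisely the interplay of two dualities. The one point that must be handled with care is the identification of $\pi_{L^*}$ with the projectivized dual inclusion $\iota_L^*$ — that is, recognizing that ``projecting the value of the gradient from $\PP(L^\perp)$'' is literally ``restricting the differential $dF_v$ to the subspace $L$''. Once this is in place, the chain rule does all the work, and the only remaining caveat is the routine density argument needed because the maps are merely rational.
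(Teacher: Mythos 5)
Your proposal is correct, and it in fact contains two arguments: your primary chain-rule argument is a genuinely different (more elementary) route than the paper's, while your alternative verification in the $\hook$-language is the paper's own proof unpacked. The paper's proof is a one-liner: it invokes functoriality in $V$ of the inclusion $S^dV^*\hookrightarrow V^*\otimes S^{d-1}V^*$ to get $\hook(\restr{F}{L}) = \pi_{L^*}\circ(\hook F)\circ \iota_{S^{d-1}L}$, and then the Veronese compatibility $\nu_V\circ\iota_L = \iota_{S^{d-1}L}\circ\nu_L$. Your pairing computation $\langle u, \restr{(w\hook F)}{L}\rangle = \langle u\cdot w, F\rangle = \langle u\cdot w, \restr{F}{L}\rangle = \langle u, w\hook(\restr{F}{L})\rangle$ for $u\in L$, $w\in S^{d-1}L$ is exactly that functoriality statement made explicit, and your specialization $w=v^{d-1}$ is their Veronese identity; the only caveat is that $\langle u, w\hook F\rangle = \langle u\cdot w, F\rangle$ holds up to a convention-dependent universal scalar (the paper itself writes ``up to a constant'' in its example), which is harmless after projectivizing. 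Your chain-rule route, by contrast, avoids the tensor formalism entirely: $d(F\circ\iota_L)_v = dF_v\circ\iota_L$ together with the identification of $\pi_{L^*}$ as the projectivization of $\iota_L^*$ (you rightly flag this as the crux --- the paper absorbs it into the phrase ``induced by the inclusion $L\subset V$''). What the algebraic route buys is that the identity is established at the level of linear maps $S^{d-1}L \to L^*$ before projectivizing, so no density argument for rational maps is needed; your chain-rule version needs the (routine, and correctly supplied) observation that the two rational maps agree on a dense open set, with the implicit proviso that $\restr{F}{L}\not\equiv 0$ so that at least one side is defined --- an edge case the paper's setup also tacitly assumes.
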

\begin{proof}
	This can be easily seen using the algebraic definition of the gradient: since the map (\ref{eq:partiallySymmetricTensors}) is functorial in $V$, it follows that $\hook(\restr{F}{L})$ is given by the composition $\pi_{L^*} \circ (\hook F) \circ \iota_{S^{d-1} L}$. Combining this with the fact that $\nu_V \circ \iota_L = \iota_{S^{d-1}L} \circ \nu_L$ (where $\nu$ stands for the Veronese embedding) gives the desired result.
\end{proof}

The following proposition is a corollary of a very important result \cite[Proposition 2.1]{kohn2020linear}, with a proof proposed by Ranestad, implying that if we intersect a projective variety $X$ with a general subspace $T$ of codimension equal to dimension of $X$ containing a fixed subspace $S$, then the number of points (which are reduced by Bertini theorem) in $(T\cap X)\setminus S$  is at most the degree of $X$ with equality holding if and only if $S\cap X=\emptyset$.
\begin{prop} \label{prop:muIsNu}
	For every $i$, we have an inequality $\nu_i(L) \leq \mu_i(L)$. Further, equality holds for all $i$ if and only if $\nu_{\dim\hat{L}}=\mu_{\dim\hat{L}}$ if and only if $\overline{(\grad F)(L)} \cap L^{\perp} = 0$.
\end{prop}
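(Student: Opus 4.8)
The plan is to reduce the whole statement to the cited lemma \cite[Proposition 2.1]{kohn2020linear} by taking a general linear section of the source and exploiting the factorization $\grad(\restr{F}{L}) = \pi_{L^*} \circ \restr{(\grad F)}{L}$ established in the previous lemma. To fix notation, write $\phi := \restr{(\grad F)}{L} \colon \PP(L) \dashrightarrow \PP(V^*)$ and $\psi := \grad(\restr{F}{L}) \colon \PP(L) \dashrightarrow \PP(L^*)$, let $X := \overline{(\grad F)(L)} = \overline{\phi(\PP(L))}$, let $C := \PP(L^\perp) \subset \PP(V^*)$ be the center of the projection $\pi_{L^*}$, and set $d := \dim\hat L = \dim\PP(L)$. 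The basic observation is that $\pi_{L^*}^{-1}$ identifies codimension-$i$ subspaces of $\PP(L^*)$ with codimension-$i$ subspaces of $\PP(V^*)$ containing $C$, and sends a general $S' \subseteq \PP(L^*)$ to a general $T \supseteq C$.

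First I would record geometric point-count formulas for $\mu_i$ and $\nu_i$ after cutting the source. Intersecting the graphs $\Gamma'$ and $\Gamma''$ with $d-i$ general source hyperplanes amounts to restricting $\phi$ and $\psi$ to a general $i$-plane $M \cong \PP^i \subset \PP(L)$; put $X_M := \overline{\phi(M)}$, which is $i$-dimensional since $\phi$ is generically one-to-one (Assumption 2) and stays so on a general $M$. Unwinding the definition of the projective degrees as counts of points of the graph meeting $d-i$ general source hyperplanes and $i$ general target hyperplanes, I get $\mu_i(L) = |S \cap X_M| = \deg X_M$ for a general codimension-$i$ subspace $S \subseteq \PP(V^*)$, while the factorization $\psi = \pi_{L^*}\circ\phi$, together with the convention that the inverse image counts only points where the map is defined, yields $\nu_i(L) = |(T \cap X_M)\setminus C|$ for the general codimension-$i$ subspace $T = \pi_{L^*}^{-1}(S') \supseteq C$. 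Here I use that $\phi|_M$ is generically one-to-one, so points of $X_M$ off the center have a unique preimage in $M$, and that $\psi$ is undefined exactly along $\phi^{-1}(C)$, which is precisely why the points lying on $C$ are excluded.

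Granting these two formulas, the inequality is immediate from the cited lemma applied to the variety $X_M$ (of dimension $i$) and the fixed subspace $C$: one gets $\nu_i(L) = |(T \cap X_M)\setminus C| \le \deg X_M = \mu_i(L)$, with equality if and only if $C \cap X_M = \emptyset$. To assemble the equivalences, the implication ``equality for all $i$'' $\Rightarrow$ $\nu_{\dim\hat L} = \mu_{\dim\hat L}$ is trivial. For $i = d$ we may take $M = \PP(L)$, so $X_M = X$, and the lemma gives equality at the top exactly when $C \cap X = \emptyset$, i.e. $\PP(L^\perp) \cap \overline{(\grad F)(L)} = \emptyset$ in $\PP(V^*)$, which is the condition $\overline{(\grad F)(L)} \cap L^\perp = 0$ read on affine cones in $V^*$. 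Conversely, since $X_M \subseteq X$ for every general $M$, the hypothesis $C \cap X = \emptyset$ forces $C \cap X_M = \emptyset$ for all $i$, hence $\nu_i = \mu_i$ for all $i$ by the equality case of the lemma. This closes the cycle of equivalences.

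The main obstacle I anticipate is the careful justification of the point-count formulas in the second step, rather than the final application of the lemma. Concretely, one must check by a Bertini-type argument that the generic-finiteness and generic-injectivity hypotheses descend to a general linear section $M$, and that the paper's convention ``inverse image $=$ closure of the well-defined points mapping into the set'' genuinely produces $|(T \cap X_M)\setminus C|$ rather than the full scheme-theoretic intersection $T \cap X_M$; this hinges on correctly identifying the indeterminacy locus $\phi^{-1}(C)$ of $\psi$ and verifying that for general $T$ the finitely many points of $(T\cap X_M)\setminus C$ are reduced and avoid the base locus. Once these conventions are pinned down, everything downstream is a direct invocation of \cite[Proposition 2.1]{kohn2020linear}.
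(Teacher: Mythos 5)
Your proposal is correct and follows essentially the same route as the paper's proof: cutting $\PP(L)$ by a general linear section so that $\mu_i$ becomes the degree of $\overline{(\grad F)(L')}$, using the factorization $\grad(\restr{F}{L}) = \pi_{L^*} \circ \restr{(\grad F)}{L}$ to express $\nu_i$ as a count of points of that image meeting a general subspace containing $\PP(L^\perp)$ but lying off it, and then invoking \cite[Proposition 2.1]{kohn2020linear} together with $\overline{(\grad F)(L')} \subseteq \overline{(\grad F)(L)}$ and the top-degree case to close the equivalences. The Bertini-type checks you flag (generic injectivity descending to a general section, reducedness of the intersection points) are glossed over at the same level in the paper's own argument, so nothing essential is missing.
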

\begin{proof}
Let $L'\subset L$ be a general subspace in $L$ of codimension $n-i$. By definition, $\mu_i(L)$ is the degree of the variety $\overline{(\grad F)(L')}$. On the other hand, $\nu_i(L)$ equals the number of pairs $(x,y)$ where $x\in \PP(L')$, $y\in \PP(L^*)$, $y=(\grad (\restr{F}{L}))(x)$ and $y$ belongs to a general subspace in $\PP(L^*)$ of codimension equal to $\dim \PP(L')$. We will identify the pair $(x,y)$ with the point $(\grad F)(x)\in \PP(V^*)$. Then $\nu_i(L)$ equals the number of points that:
\begin{itemize}
\item belong to the intersection of $\overline{(\grad F)(L')}$ with a general subspace of codimension $\dim \PP(L')$ that contains $\PP(L^\perp)$ and
\item do not belong to $\PP(L^\perp)$.
\end{itemize}

By \cite[Proposition 2.1]{kohn2020linear}, we see that $\nu_i(L)\leq \mu_i(L)$ and equality holds if and only if $\PP(L^\perp)\cap  \overline{(\grad F)(L')}=\emptyset$.

Hence, equality holds for all $i$ if and only if $\overline{(\grad F)(L)} \cap L^{\perp} = 0$, as $\overline{(\grad F)(L')}\subset \overline{(\grad F)(L)}$. The last equality holds if and only if $\nu_{\dim\hat{L}}=\mu_{\dim\hat{L}}$.
\end{proof}

\begin{remark}\label{aresultof}
	A result of Teissier \cite{T1}, \cite[Chapter 5]{T2} says that for $L$ a generic linear subspace, the condition $(\overline{\grad F)(L)} \cap L^{\perp} = 0$ in \Cref{prop:muIsNu} is always satisfied.
\end{remark}

As a first application, let us show how the numbers $\nu_i(L)$ allow us to compute the topological Euler characteristics of determinantal hypersurfaces, or more generally of $V(F)\cap\PP(L)$. These results were obtained in \cite[Section 3.1]{huh2012milnor} and \cite{dimca2003hypersurface}.
We first remark that if $V$ is the space of square matrices and $F$ is the determinant, then $V(F)\cap\PP(L)$ is the locus of degenerate matrices in $L$.
\begin{prop} \label{prop:EulerChar}
Let $X':=\PP(L)\setminus V(F)$ be the complement of the locus of points in $\PP(L)$ on which $F$ vanishes. Then the topological Euler characteristic:
\[\chi(X')=\sum_{i=0}^{\dim \PP(L)}(-1)^i\nu_i(L).\]
\end{prop}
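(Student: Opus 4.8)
The plan is to identify the numbers $\nu_i(L)$ with polar (critical-point) counts for the gradient map and then to obtain the formula from the Morse-theoretic principle that the Euler characteristic of a smooth quasi-projective variety is the signed sum of its polar multiplicities. First I would check that on $X'$ the map $\phi := \grad(\restr{F}{L})$ is an honest morphism $X' \to \PP(L^*)$. By Euler's relation $\sum_j x_j\,\partial_j(\restr{F}{L}) = d\cdot \restr{F}{L}$, the partial derivatives of $\restr{F}{L}$ cannot all vanish at a point where $\restr{F}{L}\neq 0$, so the indeterminacy locus of $\grad(\restr{F}{L})$ is contained in $V(\restr{F}{L})$ and is disjoint from $X'$. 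Hence $\Gamma''$ restricts over $X'$ to the graph of a regular map, and each $\nu_i(L)=\int_{\Gamma''}H_1^{d-i}H_2^{i}$ (with $d=\dim\PP(L)$) can be computed by counting genuine points of $X'$.

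Next I would unwind the definition of the projective degree. For a generic linear subspace $\Pi_i\subset\PP(L)$ of dimension $i$ and a generic linear subspace $\Sigma\subset\PP(L^*)$ of codimension $i$, the number $\nu_i(L)$ equals the number of points $x\in\Pi_i\cap X'$ with $\phi(x)\in\Sigma$. Interpreting $\Sigma$ as the set of hyperplanes of $\PP(L)$ through a fixed generic $(i-1)$-plane $C$, and recalling from the construction of the gradient that $\phi(x)$ is the tangent hyperplane at $x$ to the level hypersurface $\{\restr{F}{L}=\restr{F}{L}(x)\}$, the condition $\phi(x)\in\Sigma$ says exactly that $x$ is a critical point, lying on $\Pi_i$, of the linear projection with centre $C$ restricted to that level hypersurface. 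Thus $\nu_i(L)$ is the $i$-th projective polar degree attached to $\grad(\restr{F}{L})$ on $X'$. The identity to be proved is then the polar--Euler (Gauss--Bonnet) principle for $X'$, which I would establish by choosing a generic real Morse function adapted to the flag that defines the subspaces $\Pi_i$ and $\Sigma$, arranging that its critical points are precisely the points enumerated by the $\nu_i(L)$ and that these enter $\chi(X')=\sum_p(-1)^{\operatorname{ind}(p)}$ with the sign $(-1)^i$, and summing over $i$.

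The main obstacle is the behaviour of the construction over $V(\restr{F}{L})$, that is, controlling the base locus of $\grad(\restr{F}{L})$, which is the singular locus of the projective hypersurface $V(\restr{F}{L})\subset\PP(L)$. The numbers $\nu_i(L)$ are computed on the resolved graph $\Gamma''$, whose fibres over these singular points may acquire spurious components, so matching the purely enumerative count on $\Gamma''$ with the topological Euler characteristic of the open part $X'$ requires showing, via Bertini-type genericity, that the sections $\Pi_i$ and the centres $C$ meet the polar loci in reduced points lying in $X'$, that every counted critical point is nondegenerate, and that no critical points are lost along the hypersurface. In the possibly singular determinantal setting this last point is exactly where one invokes the local Milnor-fibre analysis of \cite{huh2012milnor, dimca2003hypersurface}; ensuring that the vanishing-cycle contributions supported over the singularities of $V(\restr{F}{L})$ are correctly accounted for is the delicate step, whereas the smooth case reduces directly to the transverse Morse count of the previous paragraph.
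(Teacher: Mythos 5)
Your proposal is correct and is in substance the same argument the paper uses: the paper's proof consists entirely of citations identifying the $\nu_i(L)$ with the mixed multiplicities/polar degrees of \cite[Definition 8, Remark 10]{huh2012milnor} and then invoking the corollary of the Lefschetz-type attachment theorem \cite[Theorem 9]{huh2012milnor} of Dimca--Papadima \cite{dimca2003hypersurface}, which is precisely the critical-point interpretation of the projective degrees and the Morse-theoretic cell count you sketch (your ``flag-adapted Morse function'' with $\nu_i$ critical points of index $i$ is the CW-model form of that theorem, proved there by inductively attaching $\nu_{\dim\PP(L)}$ top-dimensional cells over a generic hyperplane-section complement). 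You also correctly locate the delicate singular-hypersurface step in the same Milnor-fibre analysis of \cite{huh2012milnor,dimca2003hypersurface} that the paper's citation rests on, so your reconstruction unpacks, rather than deviates from, the paper's proof.
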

\begin{proof}
Our numbers $\nu_i(L)$ are, by definition, the projective degrees of the gradient map of the restriction of $F$ to $\PP(L)$. By \cite[Remark 10]{huh2012milnor}, these are exactly the mixed multiplicities defined in \cite[Definition 8]{huh2012milnor}. The statement is then precisely the corollary after \cite[Theorem 9]{huh2012milnor}.
\end{proof}

\begin{example}
Consider the space $\PP(V)=\PP^2$ of symmetric $2\times 2$ matrices. The determinant $F$ defines the second Veronese embedding of $V(F)\simeq\PP^1$. Hence, $\chi(V(F))=2$. Further:
\[\chi(\PP^2\setminus V(F))=\chi(\PP^2)-\chi(V(F))=3-2=1.\]
Consider $L:=V$. In this case the gradient of $F$ is a linear, and hence everywhere defined, isomorphism. In particular:
\[\nu_0=\mu_0=\nu_1=\mu_1=\nu_2=\mu_2=1.\]
As in \Cref{prop:EulerChar} we have $\chi(\PP^2\setminus V(F))=1-1+1=1$.
\end{example}
\begin{example}
Consider the space $\PP(V)=\PP^3$ of general $2\times 2$ matrices. The determinant $F$ defines the Segre embedding of $V(F)\simeq\PP^1\times \PP^1$. Hence, $\chi(V(F))=2\cdot 2=4$. Further:
\[\chi(\PP^3\setminus V(F))=\chi(\PP^3)-\chi(V(F))=4-4=0.\]
Consider $L:=V$. In this case the gradient of $F$ is a linear, and hence everywhere defined, isomorphism. In particular:
\[\nu_0=\mu_0=\nu_1=\mu_1=\nu_2=\mu_2=\nu_3=\mu_3=1.\]
As in \Cref{prop:EulerChar} we have $\chi(\PP^3\setminus V(F))=1-1+1-1=0$.
\end{example}
In \Cref{eg:P3of3x3matrices}, we will consider a less trivial example, where the $\mu_i$ are not equal to the $\nu_i$.
\subsection{Linear concentration models} \label{section:linearConcentrationsModels}
To a symmetric positive-definite matrix $\Sigma$, one can associate a probability distribution, namely the Gaussian distribution with mean zero and covariance matrix $\Sigma$. Its probability density function is given by
\[f_{\Sigma}(x):=\frac{1}{\sqrt{(2\pi)^n\det\Sigma}}e^{-\frac{1}{2}x^T\Sigma^{-1}x}.\]

To a linear space $L \subset \Sym_2(\RR^n)$ one can associate a statistical model in two ways: the \emph{linear covariance model} is the set of all probability distributions $f_\Sigma$ where $\Sigma \in L$, and the \emph{linear concentration model} is the set of all probability distributions $f_\Sigma$ where $\Sigma^{-1} \in L$. We will here focus on the latter model. We can paramatrize it with the \emph{concentration matrix} $K := \Sigma^{-1}$, i.e.\ to a matrix $K \in L$ we associate the Gaussian distribution $f_{K^{-1}}$.

In maximum likehood estimation we are given a collection of samples $s_1, \ldots, s_M \in \RR^n$, and we want to find the parameter $K \in L$ that maximizes the likelihood function $\fL_{s}(K) := \prod_i f_{K^{-1}}(s_i)$, or equivalently the log likelihood function $\ell_{s}(K) := \log \fL_{s}(K)$. If we define the \emph{sample covariance matrix $S := \frac{1}{M}\sum_{i}{s_is_i^T}$}, then the log likelihood function can (up to additive and multiplicative constants) be rewritten as
\begin{equation}\label{eq:logLikelihood}
\ell_{S}(K) = \log(\det K) - \tr(S \cdot K).
\end{equation}
The \emph{maximum likelihood degree} of the linear concentration model is defined as the number of complex critical points of \eqref{eq:logLikelihood}, where the data $S$ is assumed to be generic. It is an algebraic measure of the complexity of maximum likelihood estimation for this particular model.

The critical points are precisely the matrices $K \in L$ for which
\begin{equation}\label{eq:criticalPoints}
\tr(K^{-1} \cdot X) = \tr(S\cdot X) \text{ for all } X \in L.
\end{equation}
 This can be seen by choosing a basis $K_1, \ldots, K_d$ of $L$, writing $K=\sum{\lambda_iK_i}$, and computing the partial derivatives $\frac{\partial \ell_{S}(K)}{\partial \lambda_i}$ using Jacobi's formula.
Condition \eqref{eq:criticalPoints} can be rewritten as: $K^{-1}-S \in L^{\perp}$.

Thus, in order to compute the maximum likelihood degree, we have to count the number of intersection points of $L^{-1}$ with a generic affine translate of $L^{\perp}$.
On the other hand, from the proof of \Cref{prop:muIsNu}, it follows that $\nu_{\dim\hat{L}}(L)$ is equal to the number of points in $\PP(L^{-1}) \cap \PP(L^{\perp} + R) \setminus \PP(L^{\perp})$, where $R$ is a generic $1$-dimensional subspace. This number is the same as our maximum likelihood degree, because for any cone $C$, linear space $L$, and point $p \notin L$, there is a bijection between $C \cap (L+p)$ and $\PP(C) \cap \PP(L + \CC\cdot p) \setminus \PP(L)$.
Summarizing, we have shown the following result, which justifies the name ``ML-degree" for $\nu_{\dim\hat{L}}(L)$:
\begin{prop}
	If $L \subset V := S^2\CC^n$ is a linear space of symmetric matrices and $F$ is the determinant (so that the map $\grad : \PP(V) \to \PP(V^*)$), the ML-degree $\nu_{\dim\hat{L}}(L)$ from \Cref{def:nu} equals the maximum-likelihood degree of the linear concentration model associated to $L$.
\end{prop}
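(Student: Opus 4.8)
The plan is to recognize that for $F=\det$ the gradient map is nothing but projective matrix inversion, and then to match two descriptions of a single intersection number: the analytic one coming from critical points of the likelihood, and the enumerative one extracted from the proof of \Cref{prop:muIsNu}. Concretely, since $\grad\det=\adj$ and $\adj(K)=\det(K)\,K^{-1}$ for invertible $K$, we have $\grad F([K])=[K^{-1}]$ on the dense open locus of invertible matrices of $\PP(L)$. Hence $\overline{(\grad F)(L)}=\overline{\PP(L^{-1})}$, where $L^{-1}:=\{K^{-1}:K\in L\text{ invertible}\}$ is a cone, being closed under scaling because $(\lambda K)^{-1}=\lambda^{-1}K^{-1}$ and $\lambda K\in L$.

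Next I would set up the two counts. On the statistical side, \eqref{eq:criticalPoints} and its reformulation $K^{-1}-S\in L^\perp$ show that the critical points of \eqref{eq:logLikelihood} for generic $S$ are exactly the $K\in L$ with $K^{-1}\in S+L^\perp$; since inversion is a bijection on invertible matrices, the ML-degree equals the cardinality of $L^{-1}\cap(S+L^\perp)$, the number of points where the cone $L^{-1}$ meets a generic affine translate of $L^\perp$. On the enumerative side, the proof of \Cref{prop:muIsNu} with $i=\dim\hat{L}$ (so that $L'=L$) describes $\nu_{\dim\hat{L}}(L)$ as the number of points of $\overline{(\grad F)(L)}$ lying on a general subspace of codimension $\dim\PP(L)$ through $\PP(L^\perp)$ but off $\PP(L^\perp)$ itself; such a subspace is precisely $\PP(L^\perp+R)$ for a generic line $R$, since $\dim\PP(L^\perp+R)=\dim\PP(L^\perp)+1$ has the right codimension. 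Using the identification of the first paragraph, this reads $\nu_{\dim\hat{L}}(L)=\bigl|\PP(L^{-1})\cap\PP(L^\perp+R)\setminus\PP(L^\perp)\bigr|$.

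To bridge the two, I would invoke the elementary bijection between $C\cap(M+p)$ and $\PP(C)\cap\PP(M+\CC\cdot p)\setminus\PP(M)$, valid for any cone $C$, linear space $M$, and point $p\notin M$: the map $x\mapsto[x]$ is well defined and injective because $p\notin M$ forces $x\notin M$, and its inverse rescales a representative $y=m+c\,p$ (with $c\neq 0$ precisely because $[y]\notin\PP(M)$) to $y/c\in C\cap(M+p)$. Taking $C=L^{-1}$, $M=L^\perp$, $p=S$, and $R=\CC\cdot S$ identifies the affine count from the statistical side with the projective count on the enumerative side, giving $\nu_{\dim\hat{L}}(L)$ equal to the ML-degree.

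The main obstacle will be the matching of genericity conditions rather than any single computation. On the enumerative side \Cref{prop:muIsNu} takes $R$ \emph{general} and invokes Bertini to ensure all intersection points are reduced, whereas the ML-degree is defined for \emph{generic} sample covariance $S$. I would need to check that a generic $S$ simultaneously produces a generic affine translate $S+L^\perp$ and a generic line $R=\CC\cdot S$, so that reducedness and the avoidance of $\PP(L^\perp)$ — equivalently, invertibility of the critical $K$ and the absence of spurious points from $\overline{(\grad F)(L)}\cap L^\perp$ — hold at once. This is exactly where the standing hypotheses on $L$ (that it contains invertible matrices, and that Assumptions~1 and~2 are in force) are genuinely used.
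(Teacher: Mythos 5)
Your proposal is correct and follows essentially the same route as the paper: rewrite the critical-point condition as $K^{-1}-S\in L^{\perp}$, identify $\nu_{\dim\hat{L}}(L)$ via the proof of \Cref{prop:muIsNu} as the count $\bigl|\PP(L^{-1})\cap\PP(L^{\perp}+R)\setminus\PP(L^{\perp})\bigr|$, and bridge the two with the cone bijection $C\cap(M+p)\leftrightarrow\PP(C)\cap\PP(M+\CC\cdot p)\setminus\PP(M)$. You actually spell out two points the paper leaves implicit (the proof of the bijection and the matching of the genericity of $S$ with the generality of $R$), which is a welcome tightening rather than a deviation.
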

\begin{remark}
	In the recent collaboration project ``Linear Spaces of Symmetric Matrices" \cite{LSSM}, a lot of progress has been made in computing the numbers $\mu(L) := \mu_{\dim\hat{L}}(L) = \deg L^{-1}$ and $\nu(L) := \nu_{\dim\hat{L}}(L) = \operatorname{ML-degree}(L)$ for specific linear spaces of symmetric matrices. Here is an overview:
	\begin{itemize}
		\item For $\dim L=2$, the possible $L$ can by classified by \emph{Segre symbols}, and both $\mu(L)$ and $\nu(L)$ can be read off from the Segre symbol \cite{FMS}.
		\item For $n=3$ and $\dim L=3$, there is a classification of all possible $L$ due to C.T.C.\ Wall \cite{Wall}. In each case, $\mu(L)$ and $\nu(L)$ have been computed in \cite{DKRS}.
		\item Spaces $L$ for which $\mu(L)=1$ have been classified in \cite{Jordan}. They are \emph{Jordan algebras}.
		\item The \emph{reciprocal ML-degree of the Brownian motion tree model}, for which an explicit formula was obtained in \cite{Brownian}, is a special case of the ML-degree $\nu(L)$.
		\item For $L$ equal to the \emph{space of catalecticant matrices associated with ternary quartics}, $\mu(L)$ and $\nu(L)$ have been computed numerically in \cite{BCH}.
		\item For the linear space of symmetric matrices associated to the $n$-cycle, a formula for $\mu(L)$ has been proven in \cite{cycle}.
		\item For $L$ a \emph{generic} linear space of symmetric matrices, a formula has been obtained in \cite{MMMSV}, see also \Cref{sec:polynomiality}.
	\end{itemize}
The last three results rely on using intersection theory on the space of \emph{complete quadrics}. We will introduce this variety in \Cref{sec:CQ}, and discuss its cohomology in \Cref{completequad}.
\end{remark}

\begin{remark} \label{rmk:manivel1}
	We have just shown that for a linear \emph{concentration} model, the ML-degree is the number of invertible matrices $K \in L$ for which $K^{-1} - S \in L^{\perp}$, where $S$ is a generic symmetric matrix. For linear \emph{covariance} models, there is a similar description: the ML-degree is the number of invertible matrices $\Sigma \in L$, for which $\Sigma^{-1}S\Sigma^{-1} - \Sigma^{-1} \in L^{\perp}$ \cite[Proposition 3.1]{STZ}. Making a connection to intersection theory is significantly harder in this case. Nevertheless this ML-degree, sometimes referred to as the \emph{reciprocal} ML-degree, is understood if $L$ is 2-dimensional~\cite{FMS}, if $L$ is a 3-dimensional space of $3 \times 3$-matrices \cite{DKRS}, if $L$ consists of diagonal matrices~\cite{EFSS}, or if $L$ is generic and at most $4$-dimensional~\cite{manivel2021proof}. For that last case, see also \Cref{rmk:manivel2}.
\end{remark}

\section{ML-degrees and Segre classes}

The $\mu_i(L)$ and $\nu_i(L)$ may be computed using the Segre classes. These constructions are based on \cite{FultonIntersection}, in particular Chapter 9. The reader may consult also a very nice, new presentation of Segre classes and their applications  \cite{aluffisegre}. Our general setting is that we are given a variety $X\subset\PP^n$ and a closed subscheme $Y\subset X$. Any reader should not be afraid here of the word subscheme, it simply means that we take any homogeneous ideal that contains the ideal $I_X$ of $X$. The presented constructions are quite technical. Readers unfamiliar with characteristic classes of vector bundles may skip this section, remembering that Segre classes are currently objects that may be computed effectively e.g.~in Macaulay2~\cite{M2} and handled well from the theoretical point of view.

\subsection{Segre classes of cones}

We introduce Segre classes in a more general setting, namely Segre classes of cones. This will cover both the Segre classes of vector bundles and Segre classes of subschemes.
\begin{definition}[{\cite[Section 4.1]{FultonIntersection}}] Let $C$ be a cone over $X$ i.e. $C=\Spec(S^{\bullet})$ where $S^{\bullet}$ is a sheaf of graded $\mathcal{O}_X$-algebras. We assume that $\mathcal{O}_X \rightarrow S^{0}$ is surjective, $S^{1}$ is coherent and $S^{\bullet}$ is generated by $S^{1}$. Let $\PP(C\oplus 1)=\Proj(S^{\bullet}[z])$ be the projective completion of $C$, the projection $\eta: \PP(C\oplus 1) \rightarrow X$ and $\mathcal{O}(1)$ the canonical line bundle on $\PP(C\oplus 1)$. The Segre class of $C$ is defined as
\[
s(C)=\sum_{i\geq 0} \eta_{*}(c_1(\mathcal{O}(1))^i \cap [\PP(C\oplus 1)]).
\]
\end{definition}

\begin{remark}
Let $\mathcal{E}$ be a vector bundle of rank $r$. We associate to it the sheaf of $\mathcal{O}_X$-algebras $\Sym(\mathcal{E}^{\vee})$. In this special case it turns out that we can work with the projectivization $\PP(C)=\PP(\mathcal{E})$ instead of the projective completion $\PP(C\oplus 1)=\PP(\mathcal{E}\oplus \mathcal{O}_X)$. This leads to
\[
s_i(\mathcal{E})= \pi_{*}(c_1(\mathcal{O}_{\PP(\mathcal E)}(1))^{r-1+i}),
\]
where $\pi : \PP(\mathcal{E})\rightarrow X$.
\end{remark}

\begin{example}
Let $X=G(2,4)$ be the Grassmannian of 2-planes in $\CC^4$ and let $\mathcal{E}$ be the tautological bundle. Then
$E=\{(L,p): p\in L\} \subset G(2,4)\times \CC^4$ is the total space of $\mathcal{E}$ and $\PP(E)=\{(L,[p]): p\in L\} \subset G(2,4)\times \PP(\CC^4)$ is the total space of $\PP(\mathcal E)$. The canonical bundle $\mathcal{O}_{\PP(\mathcal E)}(1)$ has as fiber over $(L, [p])$ the dual of $\CC p \subset \CC^4$. So the global sections are given by linear functions on $\CC^4$. Since the first Chern class is the vanishing locus of a global section, we get
\[
\xi:=c_1(\mathcal{O}_{\PP(\mathcal E)}(1))=[\{(L, [p]): p\in H\}] \in A^1(\PP(E)),
\]
where $H\subset \CC^4$ is a hyperplane. Thus $$s_0(\mathcal{E})=\pi_{*}(\xi)=[\{L \in G(2,4): L\cap H \neq 0\}]=[G(2,4)].$$ We have $\xi^2=[\{(L, [p]) : p \in V_2\}]$ where $V_2 \subset \CC^4$ is a 2-plane, thus $$s_1(\mathcal E)= [\{L \in G(2,4): L\cap V_2 \neq 0\}]\in A^1(G(2,4)).$$
\end{example}

We will be interested later in Segre classes of subschemes. By definition, the Segre class $s(Y, X)$ of a subscheme $Y\subset X$ is the Segre class of its normal cone.
Let $\pi: \tilde{X} \rightarrow X$ be the projection of the blow-up of $X$ along $Y$. We denote the exceptional divisor by $\tilde{Y}=\pi^{-1}(Y)$ and $\eta=\restr{\pi}{\tilde{Y}}: \tilde{Y}\rightarrow Y$.
\begin{remark}\cite[Corollary 4.2.2]{Fulton}
Alternatively, we can compute the total Segre class of a subscheme as
\[
s(Y,X)=\sum_{k\geq 1}(-1)^{k-1}\eta_{*}\left(\restr{[\tilde{Y}]^k}{\tilde{Y}}\right).
\]
\end{remark}

\subsection{Domain approach}

We start with the case $X=\PP^n$. Fix $f_0,\dots,f_m$ to be homogeneous polynomials of degree $d$ in $n+1$ variables. We let $Y$ be defined by the ideal $I_Y:=\langle f_0,\dots,f_m\rangle$. In the setting introduced above, we are interested in the multidegree of the graph of the rational map $\PP^n\dashrightarrow \PP^m$ defined by $f:=[f_0:\dots:f_m]$. As the next theorem shows, computing these numbers is essentially equivalent to computing the degrees $\deg s_i$ of the Segre class $s(Y,\PP^n):=\sum_{i=0}^{\dim Y} s_i$ for $s_i\in A_{\dim Y-i}(Y)$. Here we consider $s_i$ as a linear combination of (classes of) subvarieties of $Y$, hence also of $\PP^n$. The degree $\deg s_i$ is the same linear combination of degrees of those projective subvarieties.
\begin{theorem}
Using the notation above, fix a general $\PP^a\subset \PP^m$ and assume that
$\restr{f}{f^{-1}(\PP^a)}$
is generically bijective. Let $V:=f^{-1}(\PP^a)\setminus Y$
Then:
\[d^{m-a}=\deg V+\sum_{i=0}^{m-a-n+\dim Y}\binom{m-a}{n-\dim Y+i}d^{m-a-n+\dim Y-i}\deg s_i.\]
\end{theorem}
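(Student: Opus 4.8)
The plan is to resolve the rational map $f$ by blowing up its base locus and to transport the whole computation to the blow-up, where $f$ becomes a morphism and the relevant pullback class is manifestly $dh-E$. Concretely, let $\pi\colon \tilde X \to \PP^n$ be the blow-up along the subscheme $Y$ defined by $I_Y$, let $E$ be the exceptional divisor, and write $h:=c_1(\pi^*\fO_{\PP^n}(1))$ and $e:=[E]$. Since $f_0,\dots,f_m$ generate $I_Y$ and have degree $d$, the strict transform of the linear system $\langle f_0,\dots,f_m\rangle$ is base-point free and defines a morphism $\tilde f\colon \tilde X\to\PP^m$ with $\tilde f^*\fO_{\PP^m}(1)=dh-e$. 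This lets me compute every intersection number on $\tilde X$ rather than on the singular graph of $f$.

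Next I would realize $f^{-1}(\PP^a)$ on the blow-up. Writing the general $\PP^a\subset\PP^m$ as an intersection of $m-a$ general hyperplanes, Bertini's theorem gives that $\tilde f^{-1}(\PP^a)$ is of pure dimension $n-(m-a)$ with class $(dh-e)^{m-a}$. Because $dh-e$ is the pullback of $\fO_{\PP^m}(1)$, this subvariety is not contained in $E$, so $\pi$ restricts to a birational morphism onto $V$; here the generic bijectivity hypothesis is what guarantees that $V$ really has the expected dimension $n-(m-a)$ and appears with multiplicity one, so that $\deg V=\deg\pi_*\bigl((dh-e)^{m-a}\bigr)$. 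Thus the theorem reduces to computing $\pi_*\bigl((dh-e)^{m-a}\bigr)$ and taking degrees.

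Now I would expand $(dh-e)^{m-a}=\sum_j\binom{m-a}{j}d^{m-a-j}h^{m-a-j}(-e)^j$ and push forward term by term. The $j=0$ term contributes $d^{m-a}H^{m-a}$, of degree $d^{m-a}$, by the projection formula $\pi_*(h^k)=H^k$ (where $H$ is the hyperplane class on $\PP^n$). For $j\ge 1$ the point is to recognize the push-forwards of the exceptional powers as Segre classes: using the formula $s(Y,\PP^n)=\sum_{k\ge1}(-1)^{k-1}\eta_*\bigl(\restr{[E]^k}{E}\bigr)$ recalled above, together with $\pi_*(e^k)=\eta_*\bigl(\restr{[E]^k}{E}\bigr)$, one gets $\pi_*((-e)^k)=-s_{k-n+\dim Y}$, the graded piece of the total Segre class living in dimension $n-k$. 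Capping with $h^{m-a-j}$ and taking degrees turns each term into $-\deg s_{j-n+\dim Y}$ (using $\deg(h^{m-a-j}\cap s_i)=\deg s_i$), while the substitution $i=j-n+\dim Y$ converts $\binom{m-a}{j}d^{m-a-j}$ into exactly $\binom{m-a}{n-\dim Y+i}d^{m-a-n+\dim Y-i}$, and the condition $j\ge n-\dim Y$ becomes $i\ge 0$. Collecting the $j=0$ and $j\ge1$ contributions gives $\deg V=d^{m-a}-\sum_i\binom{m-a}{n-\dim Y+i}d^{m-a-n+\dim Y-i}\deg s_i$, which is the claimed identity after rearranging.

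The main obstacle I anticipate is the bookkeeping in the third step: correctly matching the dimension grading and signs relating $\pi_*(e^k)$ to the graded pieces $s_i$ of $s(Y,\PP^n)$ (in particular verifying $\pi_*((-e)^k)=-s_{k-n+\dim Y}$ and $\deg(h^{m-a-j}\cap s_i)=\deg s_i$), since a single off-by-one in the Segre grading would shift all the binomial coefficients. A secondary point requiring care is justifying $\pi_*\bigl((dh-e)^{m-a}\bigr)=[V]$ with multiplicity one, i.e.\ that no component of $\tilde f^{-1}(\PP^a)$ is absorbed into $E$ and that the residual cycle is reduced; this is precisely where the generic bijectivity assumption on $\restr{f}{f^{-1}(\PP^a)}$ enters.
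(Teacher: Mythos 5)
Your proposal is correct, but it takes a genuinely different route from the paper: the paper's entire proof is a one-line citation, observing that pulling back $m-a$ general hyperplanes through $f$ amounts to choosing $m-a$ general degree-$d$ elements of the ideal $I_Y$, and then invoking \cite[Theorem 3.2]{Eklund} verbatim. What you have written out is, in substance, a self-contained proof of that cited result: blow up along $Y$, identify $\tilde{f}^*\fO_{\PP^m}(1)=dh-e$, expand $(dh-e)^{m-a}$, and convert the pushforwards of exceptional powers into graded pieces of $s(Y,\PP^n)$ via the formula $s(Y,\PP^n)=\sum_{k\geq 1}(-1)^{k-1}\eta_*\bigl(\restr{[E]^k}{E}\bigr)$ recalled in the paper. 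Your bookkeeping is exactly right: with the paper's convention $s_i\in A_{\dim Y-i}(Y)$ one indeed has $\pi_*((-e)^j)=-s_{j-n+\dim Y}$, and the substitution $i=j-n+\dim Y$ reproduces the binomial coefficients, the powers of $d$, and the summation range $0\leq i\leq m-a-n+\dim Y$. The trade-off is clear: the paper's citation is short but hides the mechanism; your version makes visible exactly where each hypothesis and each Segre-class convention enters. Two minor imprecisions, neither fatal: first, $\tilde{f}^{-1}(\PP^a)$ need not be of \emph{pure} dimension $n-(m-a)$ --- for general $\PP^a$ all components have dimension at most $n-(m-a)$, and the relevant point is that the components of exactly this dimension lie outside $E$, which follows from a dimension count on $\tilde{f}|_E$ over the strata of its image (not merely from $dh-e$ being a pullback class), and appear with multiplicity one; second, the expected-dimension and multiplicity-one statements are consequences of the generality of $\PP^a$ (Kleiman transversality for general $PGL_{m+1}$-translates, in characteristic zero) rather than of the generic bijectivity hypothesis, to which you attribute them --- that hypothesis instead serves to align $\deg V$, the degree of the honest reduced preimage, with the residual count in the cited theorem and with the paper's subsequent use of the formula for projective degrees.
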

\begin{proof}
This follows directly from \cite[Theorem 3.2]{Eklund}. We simply replaced the choice of general polynomials in the ideal (of fixed degree), by a choice of general $\PP^a$.
\end{proof}
Informally, the theorem may be interpreted as follows. When we pick $m-a$ polynomials of degree $d$, by B\'ezout we expect that the degree of the zero locus equals $d^{m-a}$. However, if we pick general polynomials from some linear subspace of degree $d$ polynomials, there is a correction term coming from the Segre classes of the base locus of that subspace.
\begin{corollary}\label{cor:ChowDomain}
Let $F$ be a homogeneous polynomial on $\PP(V)$ and $\PP(L) \subset \PP(V)$ be a subspace of projective dimension $n$. Let $Y$ be the intersection of $\PP(L)$ with the (scheme) $\grad F=0$. Let $m:=\dim Y$.  We have:
\[\mu_i(L)=(\deg F-1)^{i}-\sum_{j=0}^{i-n+m}\binom{i}{n-m+j}(\deg F-1)^{i-n+m-j}\deg s_j,\]
where $s_j$ is the $j$-th component of $s\left(Y,\PP(L)\right)$.%

Similarly:
\[\nu_i(L)=(\deg F-1)^{i}-\sum_{j=0}^{i-n+m}\binom{i}{n-m+j}(\deg F-1)^{i-n+m-j}\deg s'_j,\]
where $s'_j$ is the $j$-th component of $s(\grad (F_{|\PP(L)})=0,\PP(L))$.
\end{corollary}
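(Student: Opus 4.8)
The plan is to deduce both formulas directly from the preceding theorem, applied to two different systems of homogeneous polynomials on $\PP(L)$. For the first formula I would take $f=\restr{(\grad F)}{L}\colon \PP(L)\dashrightarrow \PP(V^*)$, whose coordinate functions are the restrictions to $L$ of the partial derivatives $\partial F/\partial x_k$. These are homogeneous forms of degree $\deg F-1$, so in the notation of the theorem $\PP(L)$ plays the role of $\PP^n$ (with $n=\dim\PP(L)$), the target $\PP(V^*)$ plays the role of $\PP^{m}$, the degree $d$ of the forms equals $\deg F-1$, and the base scheme $\langle f_0,\dots\rangle$ is exactly $Y=\PP(L)\cap\{\grad F=0\}$, whose Segre class $s(Y,\PP(L))$ supplies the $s_j$. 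To avoid the clash with the symbol $m=\dim Y$ of the corollary I would rename the target dimension to $M$; the point, verified at the end, is that $M$ cancels out of the final formula.

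The conceptual heart of the argument is the identification of the quantity $\deg V$ appearing in the theorem with one of the projective degrees $\mu_i(L)$. Fixing a general $\PP^a\subset\PP(V^*)$ of codimension $M-a$, the set $V=f^{-1}(\PP^a)\setminus Y$ is the honest preimage (the closure of the preimage over the locus where $f$ is defined), of dimension $n-M+a$; under Assumption~2 the map $f$ is generically bijective onto its image, so in particular its restriction to $f^{-1}(\PP^a)$ is generically bijective and the theorem applies. On the other hand $\mu_{M-a}(L)=\int_{\Gamma'}H_1^{\,n-(M-a)}H_2^{\,M-a}$ by \eqref{eq:muIntersectionProduct}: the factor $H_2^{\,M-a}$ restricts the graph to the preimage $f^{-1}(\PP^a)$, and the remaining $n-(M-a)$ copies of $H_1$ cut this down to the number of points of $V$ on a general linear subspace of the domain, the base locus $Y$ being automatically excluded because we compute on the graph $\Gamma'$. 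Hence $\deg V=\mu_{M-a}(L)$, which is exactly the earlier description of the $\mu_i(L)$ as degrees of honest inverse images of general subspaces.

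With this identification in hand, I would substitute $d=\deg F-1$, $\dim Y=m$, $\deg V=\mu_{M-a}(L)$ and the reindexing $i:=M-a$ into the theorem's formula
\[d^{M-a}=\deg V+\sum_{k=0}^{M-a-n+\dim Y}\binom{M-a}{n-\dim Y+k}d^{M-a-n+\dim Y-k}\deg s_k,\]
and solve for $\mu_{M-a}(L)$. After the substitution every occurrence of $M$ pairs with $a$ as $M-a=i$, so $M$ disappears and one obtains exactly the asserted expression for $\mu_i(L)$, the summation index being renamed $k\mapsto j$. The second formula is obtained by running the identical argument for the map $\grad(\restr{F}{L})\colon\PP(L)\dashrightarrow\PP(L^*)$ from the second construction: its coordinate forms are again of degree $\deg F-1$, its base scheme is $\{\grad(\restr{F}{L})=0\}$ with Segre components $s'_j$, its graph is $\Gamma''$, and $\deg V$ now equals $\nu_{M'-a}(L)$ with $M'=\dim\PP(L^*)=n$.

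I expect the routine but error-prone bookkeeping of the reindexing --- matching the binomial coefficients, the exponents of $\deg F-1$, and the summation ranges of the theorem to those of the corollary --- to be the step most in need of care, while the genuine mathematical input is the identification $\deg V=\mu_{M-a}(L)$ together with the verification that the generic-bijectivity hypothesis of the theorem follows from Assumption~2 (and its analogue for $\grad(\restr{F}{L})$). A secondary subtlety worth flagging is that the base scheme for the $\nu$-formula is $\{\grad(\restr{F}{L})=0\}$ rather than $Y$, so strictly speaking its dimension may differ from $m=\dim Y$; the two agree precisely in the equality case of \Cref{prop:muIsNu}, and otherwise the symbol $m$ in the second formula should be read as $\dim\{\grad(\restr{F}{L})=0\}$.
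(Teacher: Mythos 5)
Your proposal is correct and takes essentially the same route as the paper, which states the corollary as a direct specialization of the preceding theorem to the maps $\restr{(\grad F)}{L}$ and $\grad(\restr{F}{L})$: your substitutions $d=\deg F-1$, the identification $\deg V=\mu_{M-a}(L)$ (resp.\ $\nu_{M'-a}(L)$), and the reindexing $i=M-a$ reproduce exactly the intended derivation. Your closing caveat is also well taken: since $\{\grad(\restr{F}{L})=0\}$ contains $Y$ as a subscheme, its dimension can strictly exceed $m=\dim Y$, and the paper's own \Cref{eg:P3of3x3matrices} (where $\dim Y=0$ but the restricted singular scheme is one-dimensional, and the computation of $\nu_3$ uses the larger dimension) confirms that $m$ in the second formula must be read as $\dim\{\grad(\restr{F}{L})=0\}$, as you suggest.
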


The above corollary underlines the main difference between $\nu_i$ and $\mu_i$ from a geometric perspective. Indeed, given a polynomial $F$ on $\PP(V)$ its singular locus is defined by the ideal $\grad F$. But how to describe the singular locus, when we intersect with $\PP(L)$?
In principle, there are two possible answers.

One, is that we scheme-theoretically intersect $\grad F=0$ with $\PP(L)$.  The other one, is to first restrict $F$ to $\PP(L)$ and then take the gradient. Both constructions give us ideals, which do not have to be equal, even if $\PP(L)$ is general!

Indeed, Bertini theorem tells us that the radicals of both ideals are equal\footnote{Assuming of course that $\PP(L)$ is general.}, as the varieties associated to both ideals are just equal to (the singular locus of $V(F)$) intersected with $\PP(L)$, which is the singular locus of $V(F)\cap \PP(L)$.

Teissier \cite{T1,T2} tells us that as long as $\PP(L)$ is general, the two ideals have the same integral closure. This implies that $\nu_i=\mu_i$, although the schemes/ideals may be different.

For special $\PP(L)$, as we will see, the sequences $\nu_i$ and $\mu_i$ are distinct.

\begin{example}
Consider $\PP(V)=\PP^3$ with coordinates $[a:b:c:d]$. Let $\PP(L)\subset\PP(V)$ be given by $a+b+c+d=0$. Let $F(a,b,c,d)=abcd$. The rational map $\PP(L)\dashrightarrow \PP^3$ given by $\grad F$ has multidegree $\mu_0=1,\mu_1=3,\mu_2=3$.

This may be confirmed with the following Macaulay2 code \cite{M2}:
\begin{verbatim}
loadPackage"Cremona"
R=QQ[a,b,c,d]
S=QQ[x,y,z,t]
L=R/ideal(a+b+c+d)
phi=map(L,S,{a*b*c,a*b*d,a*c*d,b*c*d})
projectiveDegrees(phi,MathMode=>true)
\end{verbatim}
The base locus of the gradient map is given by the ideal
\begin{verbatim}
Y=ideal(a*b*c,a*b*d,a*c*d,b*c*d);
\end{verbatim}

We may compute the Segre class $s(Y,\PP(L))$, pushing it forward to $\PP(V)$:
\begin{verbatim}
SegreClass(Y,MathMode=>true)
\end{verbatim}
The output: $6H^3$ tells us that $s_0=0, s_1=6$. We may now reconfirm the computations by the first formula in Corollary \ref{cor:ChowDomain}. For example:
\[\mu_2=3^{2}-\binom{2}{1}\cdot 3\cdot 0-\binom{2}{2}\cdot 6=9-6=3.\]

We may also restrict the polynomial $F$ to $\PP(L)$ and take the gradient obtaining an ideal $Y'$.
\begin{verbatim}
F=a*b*c*d
Y'=ideal(diff(b, F), diff(c, F),diff(d, F))

\end{verbatim}
The ideals $Y'$ and $Y$ are not equal, however the integral closure of $Y'$ equals $Y$. The Segre class of $Y$ and $Y'$ in $\PP(L)$ is the same, hence $\nu_i=\mu_i$ for all $i$. As we will see this always holds (for any $\PP(L)$) for a polynomial that is a product of variables.

Surprisingly (but not coincidentally) the multidegree computes the coefficients of the (reduced) chromatic polynomial of a graph, that is a four-cycle. The general theorem will be provided in Section \ref{subs:permutvar}.
\end{example}
\begin{example}\label{exm:different}
Consider $\PP(V)=\PP^9$ be the space of $4\times 4$ symmetric matrices. Let $\PP(L)\subset\PP(V)$ be given by
\[\begin{pmatrix}
a & b & 0 & h\\
b & c & d & 0\\
0 & d & e & f\\
h & 0 & f & g
\end{pmatrix}
\]
Let $F$ be the determinant. The rational map $\PP(L)\dashrightarrow \PP^9$ given by $\grad F$ has multidegree $\mu=\{1,3,9,17,21,21,17,9\}
$.
\begin{verbatim}
R=QQ[a,b,c,d,e,f,g,h,x,y]
M=matrix{{a,b,x,h},{b,c,d,y},{x,d,e,f},{h,y,f,g}}
F=det M
ff=diff(vars R,F)
L=R/ideal(x,y)
S=QQ[a1,b1,c1,d1,e1,f1,g1,h1,x1,y1]
phi=map(L,S, first entries ff)
loadPackage"Cremona"
projectiveDegrees(phi,MathMode=>true)
\end{verbatim}

However, if we first restrict $F$ to $L$, the multidegree of the graph of the gradient changes to $\nu=\{1,3,9,17,21,21,15,5\}$.

\begin{verbatim}
S2=QQ[a1,b1,c1,d1,e1,f1,g1,h1]
proj=map(S,S2,sub(vars S2,S))
phi2=phi*proj
projectiveDegrees(phi2,MathMode=>true)
\end{verbatim}

One can provide intuition behind the difference between the last numbers in $\mu$ and $\nu$, i.e.~$9-5=2\cdot2$. Indeed, $9$ is the degree
of the variety $L^{-1}$ obtained from $L$ by taking the gradient of the determinant. The number $5$ is the degree of the projection of that variety $L^{-1}$ from $L^\perp$. If $L^{-1}$ and $L^\perp$ are disjoint (in the projective space) then the two numbers coincide. This happens when $L$ is generic and always when $L$ is (simultanously) diagonalizable. However, in this example $L^{-1}\cap L^\perp$ consists of two (reduced) points in $\PP^9$. Moreover, they both contribute with multiplicity 2 in the computation of $\mu_7$ since they are singular points of $L^{-1}$.
\begin{remark}
A more general way of understanding the difference between $\mu$ and $\nu$ will be presented in the next section (\Cref{thm:muminusnu}). In the present case where $L^{-1} \cap L^{\perp}$ is zero-dimensional, one would hope for the much easier formula $$\nu_{\dim \hat{L}}=\mu_{\dim \hat{L}}-\deg(L^{-1}\cap L^{\perp}).$$ This is true if $L^{-1}\cap L^{\perp}$ only consists of smooth points of $L^{-1}$, see \cite[Corollary 4.3]{AmendolaEtAl}, but in the current example it fails ($5 \neq 9-2$).
\end{remark}

The number $9$ is the result we obtain for $m=4$ in \cite[Conjecture 2]{sturmfels2010multivariate}, recently proved in \cite{cycle}. The number $5$ is the result we obtain for $m=4$ in the formula conjectured in \cite[Section 7.4]{drton2008lectures}, which still remains open.
\end{example}

\begin{remark}
There are many implemented methods to compute the Segre classes, e.g.~\cite{Eklund, harris2017computing, aluffi2003computing, helmer2015algorithms, harris2020segre}. In principle, any of those may be used to obtain the numbers $\mu_i$ and $\nu_i$ that we are interested in. It seems fair to point out that currently one of the best methods to compute the Segre classes goes the other way around: first compute the analogues of $\mu_i$'s (e.g.~numerically) and deduce the Segre class from this data.
\end{remark}

\subsection{Codomain approach}\label{codomain}
As we have seen in Example \ref{exm:different} the difference between $\mu$ and $\nu$ comes essentially from the intersection $L^{-1}\cap L^\perp$. This intuition was precisely formalized in \cite{AmendolaEtAl}. The authors only state the result for subspaces of symmetric matrices, however the statement holds, using exactly the same proof, in the following generality. Let $\PP^a\simeq \PP(L)\subset \PP^n$. As always we assume that $\PP(L)$ is not contained in the base locus of the gradient of a homogeneous polynomial $F$. We also assume that $\grad F$ is generically one-to-one on $\PP(L)$ and so is the restriction of the projection $\pi :\PP(V^*)\dashrightarrow \PP(L^*)$ restricted to $(\grad F)(\PP(L))$. Let $b:=\dim \PP(L^\perp)\cap \overline{(\grad F)(\PP(L))}$.

Note that $\PP(L^\perp), \overline{\grad F(\PP(L))}\subset \PP^n\hookrightarrow \PP^n\times \PP^n$, where the last inclusion is the diagonal inclusion. In particular, we may regard $\PP(L^\perp)\cap \overline{\grad F(\PP(L)}$ as a subvariety of $\PP^n$ and of $\PP(L^\perp)\times \overline{\grad F(\PP(L))}$. In particular, it makes sense to consider the Segre class $s(\PP(L^\perp)\cap \overline{\grad F(\PP(L))}, \PP(L^\perp)\times \overline{\grad F(\PP(L))})$ and to compute its degree as a class in the Chow ring of $\PP^n$. Note that $s(\PP(L^\perp)\cap \overline{\grad F(\PP(L))}, \PP(L^\perp)\times \overline{\grad F(\PP(L))})$ decomposes as a sum of classes, according to the grading of the Chow ring, and each degree $i$ component, has its degree as a (linear combination of) subscheme(s) in $\PP^n$. We will denote the degree (as a subscheme) of the degree $i$-part (in the Chow ring) simply by $s_i(\PP(L^\perp)\cap \overline{\grad F(\PP(L))}, \PP(L^\perp)\times \overline{\grad F(\PP(L))})$.
\begin{theorem}[{See \cite[Theorem 4.2]{AmendolaEtAl}}] \label{thm:muminusnu}
Using the notation above we have the following.
The ML-degree of $L$ equals:
\[\nu_a(L)=\mu_a(L)-\sum_{j=0}^b \binom{n}{j} s_j \left(\PP(L^\perp)\cap \overline{\grad F(\PP(L))}, \PP(L^\perp)\times \overline{\grad F(\PP(L))}\right). \]
\end{theorem}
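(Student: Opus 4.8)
Set $W := \overline{\grad F(\PP(L))} \subset \PP(V^*) = \PP^n$ and $Z := \PP(L^\perp) \cap W$, so that $\dim Z = b$. The plan is to convert both sides into concrete intersection numbers on $\PP^n$ and to read off their difference as an excess-intersection contribution localized along $Z$. By \emph{Assumption~2} the gradient is generically one-to-one on $\PP(L)$, so $\mu_a(L) = \deg W$. On the other hand, combining the commuting triangle displayed before \Cref{prop:muIsNu} with the proof of that proposition (specialized to the top index $i=a$), the ML-degree $\nu_a(L)$ equals the number of points of $W \cap T \setminus \PP(L^\perp)$, where $T = \PP(L^\perp + R)$ is a general linear space of codimension $a$ through the centre $\PP(L^\perp)$; this is exactly the count of points of $W$ over a general point of $\PP(L^*)$ under the projection $\pi_{L^*}$. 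Thus the theorem is equivalent to a formula for the degree dropped when $W$ is projected away from the centre $\PP(L^\perp)$.

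First I would record the B\'ezout identity $\deg W = [W]\cdot[T]$, which holds because $T$ is a linear space of complementary dimension. The zero-cycle $[W]\cdot[T]$ splits into two pieces: the points of $W \cap T$ lying away from the centre, which by a Bertini argument are reduced and number exactly $\nu_a(L)$, and the excess contribution carried by $Z = W \cap T \cap \PP(L^\perp)$. Consequently $\mu_a(L) - \nu_a(L)$ \emph{is} this excess contribution, and the task becomes its evaluation through Segre classes. I would compute it using the excess/residual intersection formula of Fulton (\cite[Ch.~6 and 9]{FultonIntersection}), organising the computation by reduction to the diagonal: viewing $\PP(L^\perp)$ and $W$ as subvarieties of the two factors of $\PP^n \times \PP^n$ and recalling that $Z = (\PP(L^\perp) \times W) \cap \Delta$, the contribution is expressed through the Segre class $s(Z, \PP(L^\perp) \times W)$ capped against the Chern classes of the bundle governing the linear section $T$. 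Tracking how the diagonal and the linear centre interact should collapse the bookkeeping to the coefficients $\binom{n}{j}$ and produce $\sum_{j=0}^b \binom{n}{j}\, s_j(Z, \PP(L^\perp)\times W)$.

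The hard part will be pinning down these numerical data exactly: that the Chern coefficients are $\binom{n}{j}$ (equivalently, $c_j$ of a rank-$n$ bundle with total class $(1+H)^n$) and that the relevant Segre class is the one computed in the product $\PP(L^\perp) \times W$ rather than the intrinsic $s(Z,W)$. A direct calculation on the blow-up $\mathrm{Bl}_Z W$ delivers the excess, but with the intrinsic class $s(Z,W)$, stray powers of the hyperplane class, and less transparent binomial coefficients; the delicate step is the combinatorial identity repackaging this into the clean product form, which is precisely where the reduction to the diagonal and the linearity of $\PP(L^\perp)$ must be exploited. A further subtlety, illustrated by \Cref{exm:different} where $5 \neq 9 - 2$, is that $W$ may be singular or the scheme $Z$ non-reduced, so the excess is genuinely a Segre-class contribution and cannot be replaced by the naive count $\deg Z$; the Segre class is exactly what records the multiplicities of $W$ along $Z$. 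Finally, one must justify the Bertini step ensuring the points off the centre are reduced, so that they contribute with multiplicity one and indeed total $\nu_a(L)$.
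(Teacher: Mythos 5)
A preliminary remark: the paper does not actually prove \Cref{thm:muminusnu} --- it quotes it from \cite[Theorem 4.2]{AmendolaEtAl}, noting only that the identical proof extends from symmetric matrices to this generality --- so your attempt can only be measured against that cited argument and the reductions the survey itself sets up. Your skeleton agrees with both: $\mu_a(L)=\deg W$ by Assumption~2; $\nu_a(L)$ is the number of points of $W\cap T$ off the centre, for $T$ a general codimension-$a$ linear space containing $\PP(L^\perp)$ (this is exactly the mechanism in the proof of \Cref{prop:muIsNu}, and the Bertini argument via \cite[Proposition 2.1]{kohn2020linear} used there already gives the reducedness of those points, so that worry of yours is not really open); hence $\mu_a(L)-\nu_a(L)$ is the excess of the linear section $W\cap T$ along $Z=\PP(L^\perp)\cap W$, to be evaluated by Segre classes. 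This architecture is sound.

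The step you explicitly leave unexecuted --- repackaging the intrinsic excess into the product form with weights $\binom{n}{j}$ --- is indeed the entire nontrivial content, but it is a short standard lemma, not the elaborate combinatorial identity you anticipate. Concretely: resolving the projection on $\mathrm{Bl}_Z W$ with exceptional divisor $E$, one gets $\mu_a(L)-\nu_a(L)=\int\bigl(H^a-(H-E)^a\bigr)=\sum_{j=0}^{b}\binom{a}{j}\deg s_j(Z,W)$, where $s_j$ is the dimension-$j$ piece; equivalently this is $\int (1+H)^a\cap s(Z,W)$, so the naive computation produces $\binom{a}{j}$ against the \emph{intrinsic} class, exactly as you feared. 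The bridge is: since the centre $\Lambda:=\PP(L^\perp)\cong\PP^{n-a-1}$ is linear, hence smooth, the normal cone of the diagonally embedded $Z$ in $\Lambda\times W$ splits as $T\Lambda|_Z\oplus C_ZW$ (factor the embedding through the graph $Z\hookrightarrow\Lambda\times Z\hookrightarrow\Lambda\times W$), so that $s(Z,\Lambda\times W)=c\bigl(T\Lambda|_Z\bigr)^{-1}\cap s(Z,W)$ with $c\bigl(T\Lambda|_Z\bigr)=(1+H)^{n-a}$; capping with $(1+H)^n$ instead of $(1+H)^a$ absorbs precisely this factor, since $(1+H)^a(1+H)^{n-a}=(1+H)^n$. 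This one-line use of the smoothness of the centre replaces your proposed Fulton--MacPherson reduction to the diagonal; the diagonal/join formalism (van Gastel, Flenner--O'Carroll--Vogel), which is what the product-form statement in \cite{AmendolaEtAl} is modelled on, yields the same conversion. With this lemma inserted your plan closes; do also pin the grading convention ($j$ must index the \emph{dimension} of the components, $0\le j\le b$), which you can sanity-check against \Cref{eg:P3of3x3matrices}, where $n=5$, $a=3$, $b=1$ and $\nu_3=4+7-5\cdot 2=1$.
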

\begin{example}[Based on Example 4.5 in \cite{AmendolaEtAl}] \label{eg:P3of3x3matrices}
Consider the space $L$ to be the following subspace of the symmetric $3\times 3$ matrices:
\[
\begin{pmatrix}
0 & a & b\\
a & 0 & c\\
b & c & d
\end{pmatrix}.
\]
As the following code shows we have $\mu=(1, 2, 4, 4)$.
\begin{verbatim}
R=QQ[a,b,c,d,x,y]
M=matrix{{x,a,b},{a,y,c},{b,c,d}}
F=det M
ff=diff(vars R,F)
L=R/ideal(x,y)
S=QQ[a1,b1,c1,d1,x1,y1]
phi=map(L,S, first entries ff)
loadPackage"Cremona"
projectiveDegrees(phi,MathMode=>true)
\end{verbatim}
Here $\PP(L^\perp)\cap\PP(L^{-1})$ is one-dimensional, i.e.~there will be two graded pieces of the Segre class contributing to the difference among $\mu$ and $\nu$. The degrees of these Segre classes are $-7$ and $2$, thus we get:
\[\nu_3=4+7-5\cdot 2=1.\]

Indeed, we have $\nu=(1, 2, 2, 1)$ as the following code shows:
\begin{verbatim}
S2=QQ[a1,b1,c1,d1]
proj=map(S,S2,sub(vars S2,S))
phi2=phi*proj

\end{verbatim}

The alternating sum of the $\nu_i$ is equal to $0$.
This is consistent with \Cref{prop:EulerChar}:
\begin{align*}
\chi\left(\PP(L) \setminus V(\restr{\det}{L})\right) &= \chi(\PP^3) - \chi \Big(V(a(2bc-ad))\Big)\\
&= 4-\chi \left(V(a)\right)- \chi \left(V(2bc-ad)\right) + \chi \left(V(a,bc)\right) \\
&=4-3-4+3 = 0.
\end{align*}

We may also confirm the formulas in Corollary \ref{cor:ChowDomain}.
The base locus $Y$ in this case is zero-dimensional, but not reduced:
\begin{verbatim}
Y=sub(ideal(ff),L)
\end{verbatim}
Hence, the only Segree class we have is $s_0$ of degree four (which in this case could also be computed by the standard degree computation):
\begin{verbatim}
SegreClass(Y)
\end{verbatim}
Hence, $\mu_3=2^3-\binom{3}{3}2^0\cdot 4=4$.

If we restrict the determinant to $\PP(L)$ the singular locus is one dimensional. Hence, we obtain two Segre classes $s_0$ and $s_1$ of degrees $2$ and $-5$ respectively:
\begin{verbatim}
Y'=ideal(diff(a,F),diff(b, F), diff(c, F),diff(d, F))
SegreClass(Y')
\end{verbatim}
Thus, for example, we have:
\[\nu_3=2^3-\binom{3}{2}2^1\cdot 2-\binom{3}{3}2^0\cdot(-5)=1.\]
\end{example}
\section{Complete varieties: examples}

Our numbers $\mu_i(L)$ can be computed as follows:
\[
\mu_i(L) = \int_{\hat{L}}{H_1^{d-i} \cdot H_2^{i}} = \int_{\Gamma}{H_1^{d-i} \cdot H_2^{i} \cdot [\hat{L}]}.
\]

In the diagram (\ref{eq:diagramGradF}), the graph $\Gamma=\Gamma_{\grad F}$ is usually not smooth. We would like to replace it by a smooth variety, i.e.\ find a birational morphism $\tilde{\Gamma} \to \Gamma$ where $\tilde{\Gamma}$ is smooth.

\begin{definition}
	A \emph{complete variety} with respect to a homogeneous polynomial $F \in S^d V^*$ is a smooth variety $\tilde{\Gamma}$ equipped with two maps $\pi: \tilde{\Gamma} \to \PP(V)$ and $\pi': \tilde{\Gamma} \to \PP(V^*)$ such that following diagram commutes:
	\begin{equation}\label{eq:diagramCompleteVariery}
	\begin{tikzcd}
	& \tilde{\Gamma} \arrow[ld,"\pi"'] \arrow[rd,"\pi'"] &  \\
	\PP(V) \arrow[rr,"\grad F"',dashed] & & \PP(V^*)
	\end{tikzcd}
	\end{equation}
\end{definition}

Our aim is to have a simple intersection theory on $\tilde{\Gamma}$, in particular, we want to construct it effectively.
Suppose we have such a $\tilde{\Gamma}$. For $\PP(L) \subset \PP(V)$, we let $\PP(\tilde{L})$ denote the strict transform of $L$ over $\tilde{\Gamma} \to \PP(V)$. Then
\begin{equation}\label{eq:muChow}
\mu_i(L) = \int_{\tilde{\Gamma}}{H_1^{d-i} \cdot H_2^{i} \cdot [\tilde{L}]}.
\end{equation}
So if we can describe the Chow ring of $\tilde{\Gamma}$, and compute the class $[\tilde{L}] \in A_d(\tilde{\Gamma})$, then we can use the above formula to compute $\mu_i(L)$. In the rest of this section we will construct complete varieties $\tilde{\Gamma}_F$ for some specific choices of $F$. In the next section we will describe their cohomology ring.

\subsection{The permutohedral variety}\label{subs:permutvar}
Let $\PP(V) = \PP^n$; we fix a basis $e_0, \ldots, e_n$ of $V$. Let $F$ be the polynomial $x_0x_1\cdots x_n$. Then $\grad F: \PP(V) \dashrightarrow \PP(V^*)$ is the Cremona map $[x_0:x_1:\ldots:x_n] \mapsto [x_0^{-1}:x_1^{-1}:\ldots:x_n^{-1}]$. We will write $L^{-1} := \overline{(\grad F)(L)}$.

\begin{lemma}
	$L^{-1} \cap L^{\perp} = 0$; hence $\mu_i(L)=\nu_i(L)$ for all $i$.
\end{lemma}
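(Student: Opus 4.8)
The ``hence'' clause is immediate from \Cref{prop:muIsNu}, which gives $\mu_i(L)=\nu_i(L)$ for all $i$ exactly when $\overline{(\grad F)(L)}\cap L^\perp=0$. So the plan is to prove the geometric statement $L^{-1}\cap L^\perp=0$, i.e.\ that $\PP(L^{-1})$ and $\PP(L^\perp)$ are disjoint in $\PP(V^*)$. On the open torus $U\subset\PP(V)$ where all coordinates are nonzero, $\grad F$ is the Cremona involution $[x]\mapsto[x^{-1}]$, and the standing assumption that $\grad F$ is generically finite on $\PP(L)$ forces no coordinate to vanish identically on $L$ (otherwise the map would be constant on a positive-dimensional locus); hence $\PP(L)\cap U$ is dense in $\PP(L)$ and $L^{-1}=\overline{\{[x^{-1}]:[x]\in\PP(L)\cap U\}}$. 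I would then show that every $[y]\in L^{-1}$ admits a vector $v\in L$ with $\langle v,y\rangle\neq0$, which forces $[y]\notin\PP(L^\perp)$.

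To produce such a $v$, fix $[y]\in L^{-1}$ and, by curve selection, choose a Puiseux arc $x(t)\in L$ with all $x_i(t)\neq0$ and $[x(t)^{-1}]\to[y]$. Write $a_i:=\operatorname{ord}_t x_i(t)$, set $a':=\max_i a_i$ and $B:=\{i:a_i=a'\}$, and let $\alpha_i:=\lim_{t\to0}t^{-a_i}x_i(t)\neq0$ be the leading coefficients. Rescaling the inverse arc by $t^{a'}$ gives $t^{a'}x_i(t)^{-1}\to\alpha_i^{-1}$ for $i\in B$ and $\to0$ otherwise, so $B=\operatorname{supp}(y)$ and I may take the representative with $y_i=\alpha_i^{-1}$ for $i\in B$ and $y_i=0$ for $i\notin B$. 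The key step is to apply the coordinate projection $\pi_B:L\to\CC^B$ to the rescaled arc $t^{-a'}x(t)\in L$: since $t^{-a'}x_i(t)\to\alpha_i$ for $i\in B$, one has $\pi_B(t^{-a'}x(t))\to(\alpha_i)_{i\in B}$, and because $\pi_B(L)$ is a linear, hence closed, subspace of $\CC^B$, this limit lies in $\pi_B(L)$. Thus there is $v\in L$ with $v_i=\alpha_i$ for all $i\in B$, and for this $v$ I compute
\[\langle v,y\rangle=\sum_{i\in B}v_i y_i=\sum_{i\in B}\alpha_i\alpha_i^{-1}=|B|\neq0,\]
using that $B\neq\emptyset$ since $y\neq0$. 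Hence $[y]\notin\PP(L^\perp)$, as desired.

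The torus points are the easy heart of this: there $B=\{0,\dots,n\}$, one may simply take $v=x$ for $[y]=[x^{-1}]$ and read off $\langle x,x^{-1}\rangle=n+1\neq0$. The main obstacle is the \emph{boundary} points of $L^{-1}$, those with vanishing coordinates, where this naive pairing degenerates to $0$ (the two supports become disjoint); the device that rescues the argument is to pass to the top-valuation set $B$ and exploit that $\pi_B(L)$ is closed, which promotes the leading coefficients $(\alpha_i)_{i\in B}$ to an honest element of $L$. I would also record the mild hypothesis used throughout --- no coordinate identically zero on $L$, i.e.\ $M(L)$ loopless --- which, as noted, is automatic from generic finiteness as soon as $\dim\PP(L)\geq1$.
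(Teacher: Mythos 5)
Your proof is correct and is essentially the paper's own argument: the paper likewise approaches $y$ by invertible points of $L$ (a sequence $x_j \in L \cap (\CC^*)^{n+1}$, rescaled within $L$ so that $x_j^{-1} \to y$ coordinatewise) and derives the contradiction $\lim_j \sum_i y_i x_{j,i} = |I| \neq 0$ with $I = \operatorname{supp}(y)$, which is exactly your limit $|B| \neq 0$. Your detour through the closed projection $\pi_B(L)$ to extract an honest $v \in L$ is a harmless refinement --- one can simply pair $y$ with the rescaled arc $t^{-a'}x(t) \in L$ and let $t \to 0$, as the paper does with its sequence --- and your explicit observation that the argument needs no coordinate to vanish identically on $L$ (automatic from generic finiteness once $\dim \PP(L) \geq 1$) makes precise a hypothesis the paper's proof uses implicitly when it asserts the existence of such a sequence.
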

\begin{proof}
	Suppose we have $y=(y_0,\ldots,y_n) \in L^{-1}$; write $I:=\{i \mid y_i \neq 0\} \subset [n+1]$. There is a sequence $(x_j)_{j \in \NN}$ with $x_j = (x_{j,0}, \ldots, x_{j,n}) \in L \cap (\CC^*)^{n+1}$ and $\lim_{j \to \infty}{x_j^{-1}}=y$, i.e.\ \[\lim_{j \to \infty}{x_{j,i}}=\begin{cases}
	y_i^{-1} & \text{for } i \in I,\\
	\infty & \text{for } i \notin I.
	\end{cases}\]
	So we find that
	\[\lim_{j \to \infty}{\sum_{i=0}^{n}{y_ix_{j,i}}} = \lim_{j \to \infty}{\sum_{i \in I}{y_ix_{j,i}}}=|I|.\]
	If we had $y \in L^{\perp}$, we would have $\sum_{i=0}^{n}{y_ix_{j,i}} = 0$ for all $j$, contradicting the above.
\end{proof}

There is a beautiful connection between our numbers $\nu_i(L)$ and matroid theory.
A linear subspace $L \subset V$ gives rise to a matroid $M(L)$ on $[n+1] = \{0,1,\ldots,n\}$: a subset $S \subseteq [n+1]$ is independent if $\{e_i \mid i \in S\}$ is linearly independent modulo $L$.
One of the most important matroid invariants is the characteristic polynomial, which generalizes the chromatic polynomial of a graph. The definition can be found for instance in \cite[Section 2]{HuhKatz}.

\begin{theorem}[{\cite[Theorem 1.1]{HuhKatz}}] \label{thm:HuhKatz}
The numbers $\nu_i(L)$ are the coefficients of the reduced characteristic polynomial of $M(L)$.
\end{theorem}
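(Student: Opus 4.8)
The plan is to reduce the statement to a computation of the projective degrees $\mu_i(L)$ on the permutohedral variety, and then to identify those intersection numbers with the coefficients of $\overline{\chi}_{M(L)}$ via the combinatorics of the Bergman fan. First, the preceding Lemma gives $L^{-1}\cap L^{\perp}=0$, so \Cref{prop:muIsNu} yields $\nu_i(L)=\mu_i(L)$ for all $i$. It therefore suffices to compute the bidegrees $\mu_i(L)$ of the graph of the Cremona map restricted to $\PP(L)$.

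Next I would realize a complete variety resolving the Cremona map. Let $X_n$ be the permutohedral variety, obtained from $\PP(V)=\PP^n$ by iteratively blowing up the (strict transforms of) all coordinate linear subspaces; equivalently, $X_n$ is the toric variety of the braid fan, equivalently the De Concini--Procesi wonderful compactification of the complement of the coordinate arrangement. On $X_n$ the Cremona involution lifts to an automorphism exchanging the two projections, so $X_n$ is a complete variety with maps $\pi\colon X_n\to\PP(V)$ and $\pi'\colon X_n\to\PP(V^*)$; let $\alpha,\beta\in A^1(X_n)$ be the pullbacks under $\pi$ and $\pi'$ of the hyperplane classes of $\PP(V)$ and $\PP(V^{*})$. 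Writing $d:=\dim\PP(L)$, formula \eqref{eq:muChow} gives
\[
\mu_i(L)=\int_{X_n}\alpha^{\,d-i}\,\beta^{\,i}\cdot[\tilde L],\qquad i=0,\dots,d,
\]
where $[\tilde L]\in A_d(X_n)$ is the class of the strict transform of $\PP(L)$.

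The heart of the argument is to compute $[\tilde L]$ and the intersection numbers above combinatorially. The strict transform $\tilde L$ is the wonderful model of the arrangement complement $\PP(L)\setminus\bigcup_i\{x_i=0\}$, and I would use the Feichtner--Yuzvinsky presentation of $A(X_n)$ together with the De Concini--Procesi description of how $\tilde L$ meets the nested-set boundary strata to identify $[\tilde L]$ with the Minkowski weight dual to the Bergman fan $\Delta_{M(L)}$ of the matroid, whose maximal cones correspond to complete flags of proper flats of $M(L)$. The classes $\alpha$ and $\beta$ restrict on $\tilde L$ to the two natural nef classes on the Bergman fan, and intersecting their powers against $\Delta_{M(L)}$ via the fan-displacement rule produces a signed count over flags of flats. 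By the M\"obius-function (equivalently Whitney no-broken-circuit) formula for the characteristic polynomial, this count equals $(-1)^{i}$ times the coefficient of $q^{\,d-i}$ in $\overline{\chi}_{M(L)}$, giving $\nu_i(L)=\mu_i(L)=(-1)^{i}\,[q^{\,d-i}]\,\overline{\chi}_{M(L)}$, i.e.\ the $\nu_i(L)$ are the coefficients of $\overline{\chi}_{M(L)}$ up to sign, as claimed.

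The main obstacle is this middle step: proving that $[\tilde L]$ is exactly the Bergman class of $M(L)$ and that the $\alpha,\beta$-intersection numbers against it reproduce the M\"obius coefficients. This requires the full strength of wonderful-compactification theory (the combinatorially prescribed intersection pattern of $\tilde L$ with the boundary strata) together with the matroid/Chow-ring dictionary; the bookkeeping relating the two nef classes to deletion and contraction --- and in particular the appearance of the reduced rather than the full characteristic polynomial, which reflects projectivizing away the cone point --- is where the care is needed. By contrast, the reduction in the first step and the lift of the Cremona map in the second are formal.
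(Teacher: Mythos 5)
Your outline is correct and is essentially the same argument the paper relies on: the paper itself only establishes the reduction $\nu_i(L)=\mu_i(L)$ (via the preceding lemma showing $L^{-1}\cap L^{\perp}=0$ together with \Cref{prop:muIsNu}) and the intersection-theoretic formula \eqref{eq:muChow} on the permutohedral variety, and then cites \cite[Theorem 1.1]{HuhKatz}, whose proof is exactly your middle step --- identifying the class of the strict transform $[\tilde{L}]$ with the Bergman class of $M(L)$ (as a Minkowski weight) and evaluating $\alpha^{d-i}\beta^{i}$ against it by the fan-displacement rule to recover the M\"obius-function coefficients of the reduced characteristic polynomial. The only incompleteness is the one you already flag yourself: the identification of $[\tilde{L}]$ with the Bergman class and the flag count are asserted rather than carried out, but this is precisely the content of the cited work, so your sketch matches the intended proof.
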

This geometric interpretation of the characteristic polynomial was the key ingredient in the proof of the longstanding Rota-Heron-Welsh conjecture\cite{HuhKatz,AdiprasitoHuhKatz}, which states that its coefficients form a log-concave sequence.

In our current setting, a desingularization of $\Gamma$ is given by the \emph{permutohedral variety} $\Per(V)=\Per_n$. This variety can be described as the closure of the image of the rational map
\[
\Delta F: \PP(V) \to \PP(V) \times \PP(\bigwedge^2 V) \times \cdots \times \PP(\bigwedge^{n-1} V):
\]
\[
[x] \mapsto ([x_0:\ldots:x_n],[x_0x_1:x_0x_2:\ldots:x_{n-1}x_n],\ldots,[x_0\cdots x_{n-1}:\ldots:x_1\cdots x_{n}]).
\]
Alternatively, $\Per_n$ can be constructed as a repeated blow-up. Consider the sequence
\begin{equation} \label{eq:blowupPer}
\PP^n = X_0 \longleftarrow X_1 \longleftarrow \cdots \longleftarrow X_{n-1},
\end{equation}
where $X_1$ is obtained from $X_0$ by blowing up the points $\{[e_i]\mid 0 \leq i \leq n\}$; $X_2$ is obtained from $X_1$ by blowing up the strict transform of the lines $\{\langle e_i,e_j \rangle \mid 0 \leq i < j \leq n\}$, and in general $X_{k+1}$ is obtained from $X_{k}$ by blowing up the strict transform of the (projective) $k$-planes $\{\langle e_{i_0},\ldots e_{i_k} \rangle \mid 0 \leq i_0 < \ldots < i_k \leq n\}$. Then the variety $X_{n-1}$ is isomorphic to the permutohedral variety $\Per_n$.

The first construction clearly resolves the graph of $\grad F$, via the projection maps $\pi_1: P_n \to \PP(V)$ and $\pi_{n-1}: P_n \to \PP(\bigwedge^{n-1}V) \cong \PP(V^*)$. The second is smooth, as can be seen from the fact that blowing up a smooth subvariety of a smooth variety again yields a smooth variety. To see that both constructions agree, one can inductively show that the variety $X_k$ obtained after $k$ blow-ups is isomorphic to the image closure of the map $\PP(V) \to \PP(V) \times \PP(\bigwedge^2 V) \times \cdots \times \PP(\bigwedge^{k+1} V)$. Putting all of this together we see that indeed, $\Per_n$ is a complete variety.

A third construction of $\Per_n$ goes via toric geometry. This we will explained in detail in Section \ref{sec:perCohomology}.

\subsection{The space of complete collineations}
We take $V=W \ot W$, where $W$ is a $\CC$-vector space of dimension $d$. We think of $V$ as the space of $d\times d$ matrices, and take $F := \det \in S^d V^*$. Then $\grad F: \PP(V) \dashrightarrow \PP(V^*)$ is the projectivization of the map $V \to V^*: A \mapsto \adj(A)$ mapping a square matrix to its adjugate (which equals $\det(A)\cdot A^{-1}$ if $A$ is invertible).

For $A \in W \ot W$, we write $\bigwedge^kA \in \bigwedge^k W \ot \bigwedge^k W$ for the $k$-th \emph{compound matrix}. In coordinates, $\bigwedge^kA$ is the $\binom{d}{k} \times \binom{d}{k}$-matrix whose entries are the $k \times k$-minors of $A$. The coordinate-free description is as follows: if we view $A$ as a linear map $W^* \to W$, then $\bigwedge^kA: \bigwedge^k(W)^* \cong  \bigwedge^k(W^*) \to \bigwedge^kW$. Note that, up to signs, $\bigwedge^{d-1}{A}=\adj(A)$, where $\adj(A)$ is the adjugate matrix of $A$. %

\begin{definition} \label{def:CC}
	The variety $\ComplCol(W)$ of \emph{complete collineations} is the closure of the image of the set of invertible matrices under the map
	\begin{align*}
	\varphi: \PP(W \ot W) \dashrightarrow {\PP\left(W \ot W\right)} \times {\PP\left(\bigwedge^2W \ot \bigwedge^2W\right)} \times \cdots \times {\PP\left(\bigwedge^{d-1}W \ot \bigwedge^{d-1}W\right)},
	\end{align*}
	sending a matrix $A$ to $(A,\bigwedge^2 A, \ldots, \bigwedge^{d-1} A)$.
	This is a complete variety with respect to $F := \det \in S^d V^*$.
\end{definition}

Here is an alternative construction of the space of complete collineations:
Start with the space $\PP(W \ot W)$ and consider the following sequence of blow-ups
\begin{equation} \label{eq:blowup}
\PP(W \ot W) = X_0 \longleftarrow X_1 \longleftarrow \cdots \longleftarrow X_{d-2} =: \ComplCol(W),
\end{equation}
where $X_i$ is the blow-up of $X_{i-1}$ at the strict transform of the locus of rank $i$ matrices.

\begin{remark}
	If we intersect $\ComplCol(W)$ with the space of tuples of diagonal matrices, we obtain the permutohedral variety $\Per_{d-1}$.
\end{remark}

Much more on complete collineations can be found for instance in \cite{ThaddeusComplete,Massarenti1,DeConciniProcesi1}.

{\bf Degeneration spaces.}
For each $r \in \{1,\ldots,d-1\}$, we define a subvariety $S_r \subset \ComplCol(V)$:
\begin{align} \label{eq:Sr}
S_r =& \{(A_1, \ldots, A_{d-1}) \in \ComplCol(W) \mid \rk A_r = 1\} \nonumber \\
=& \overline{\{(A_1, \ldots, A_{d-1}) \in \ComplCol(W) \mid \rk A_1 = r\}} \\
=& \overline{\{(A_1, \ldots, A_{d-1}) \in \ComplCol(W) \mid \rk A_{d-1} = d-r\}}. \nonumber
\end{align}
Alternatively, $S_r$ is the strict transform of the exceptional locus of the blow-up $X_{r-1} \longleftarrow X_r$ in (\ref{eq:blowup}). From this last description it follows that $S_r \subset \ComplCol(W)$ is irreducible of codimension one. A general point in $S_r$ is determined by a rank $r$ matrix $A_1$ and rank $d-r$ matrix $A_{d-1}$ that vanishes on the image of $A_1$. The image of $A_1$ (resp.~kernel of $A_{d-1}$) is parameterized by the Grassmannian $G(r,d)$. Hence, there is a natural map from $S_r$ to $G(r,d)$. The cohomology class of $S_r$ will play a central role in Section~\ref{completequad}.

\subsection{The space of complete quadrics} \label{sec:CQ}

We take $V=S^2W$ the space of symmetric $d \times d$ matrices, and $F=\det \in S^dV^*$ the determinant of a general symmetric matrix. Analogously to the previous section, we obtain:

\begin{definition} \label{def:CQ}
	The variety $\CQ(W)$ of \emph{complete quadrics} is the closure of the image of the set of invertible matrices under the map
	\begin{align*}
	\varphi: \PP(S^2W) \dashrightarrow {\PP\left(S^2W\right)} \times {\PP\left(S^2(\bigwedge^2W)\right)} \times \cdots \times {\PP\left(S^2(\bigwedge^{d-1}W)\right)},
	\end{align*}
	sending a matrix $A$ to $(A,\bigwedge^2 A, \ldots, \bigwedge^{d-1} A)$. This is a complete variety with respect to $F := \det \in S^d V^*$.
\end{definition}
Alternatively, the space of complete quadrics can be constructed via the following sequence of blow-ups:
\begin{equation} \label{eq:blowupCQ}
\PP(S^2 W) = X_0 \longleftarrow X_1 \longleftarrow \cdots \longleftarrow X_{d-2} =: \CQ(W),
\end{equation}
where $X_i$ is the blow-up of $X_{i-1}$ at the strict transform of the locus of rank $i$ matrices.

\begin{remark} \label{rmk:degenCQ}
	We can define degeneration spaces $S_r$ also for complete quadrics, either as the strict transform of the exceptional locus of the blow-up $X_{r-1} \longleftarrow X_r$ in (\ref{eq:blowupCQ}), or by replacing $\ComplCol(W)$ by $\CQ(W)$ in \Cref{eq:Sr}.
\end{remark}

\begin{remark}
	The space $\CQ(W)$ is the intersection of $\ComplCol(W)$ with ${\PP\left(S^2W\right)} \times \cdots \times {\PP\left(S^2(\bigwedge^{d-1}W)\right)}$.
\end{remark}

\subsection{Geometry of complete quadrics}
To a nonzero symmetric matrix $A \in S^2W$ we can associate a quadric hypersurface in $\PP(W^*)$, namely $$Q(A) := \{ x \in \PP(W^*) \mid x^T A x = 0\}.$$ Hence we can think of $\PP(S^2W)$ as the space of quadrics in $\PP(W^*)$.
Assume for a moment that $A$ is invertible. Then the nonzero matrix $\bigwedge^k A$ cuts out a quadric $$Q(\bigwedge^k A) = \{ x \in \PP(\bigwedge^k W^*) \mid x^T (\bigwedge^kA) x = 0\}$$ in $\PP(\bigwedge^k W^*)$. If we intersect $Q(\bigwedge^k A)$ with the Grassmannian $\GG(k-1,\PP(W^*)) \subseteq \PP(\bigwedge^k W^*)$, we obtain the locus of all $(k-1)$-planes tangent to $Q(A)$:%
\begin{lemma}
The intersection $Q(\bigwedge^k A) \cap \GG(k-1,\PP(W^*))$ is the space of $(k-1)$-planes tangent to $Q(A)$.
\end{lemma}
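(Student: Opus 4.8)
The plan is to reduce the lemma to a single multilinear-algebra identity expressing the quadratic form $\bigwedge^k A$ on a decomposable Plücker vector as a Gram determinant, and then to match this with the classical criterion for tangency of a linear subspace to a quadric.

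First I would fix notation on the Grassmannian side. A point of $\GG(k-1,\PP(W^*))$ corresponds to a $k$-dimensional subspace $U\subseteq W^*$, whose projectivisation $\PP(U)$ is the $(k-1)$-plane in question; choosing a basis $v_1,\dots,v_k$ of $U$, the Plücker embedding sends $U$ to $[v_1\wedge\cdots\wedge v_k]\in\PP(\bigwedge^k W^*)$. Since $Q(\bigwedge^k A)$ is by definition the quadric cut out on $\PP(\bigwedge^k W^*)$ by the symmetric form $\bigwedge^k A$, a point of $\GG(k-1,\PP(W^*))$ lies on $Q(\bigwedge^k A)$ precisely when $(v_1\wedge\cdots\wedge v_k)^T(\bigwedge^k A)(v_1\wedge\cdots\wedge v_k)=0$.

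The key step is the identity
\[(v_1\wedge\cdots\wedge v_k)^T\,(\textstyle\bigwedge^{k}A)\,(v_1\wedge\cdots\wedge v_k)=\det\big(v_i^T A v_j\big)_{1\le i,j\le k}.\]
Using the coordinate-free description from the text, $\bigwedge^k A$ is the map $\bigwedge^k W^*\to\bigwedge^k W$ sending $v_1\wedge\cdots\wedge v_k$ to $(Av_1)\wedge\cdots\wedge(Av_k)$, and the value of the associated quadratic form is the pairing of this image with $v_1\wedge\cdots\wedge v_k$ under the natural duality $\bigwedge^k W^*\times\bigwedge^k W\to\CC$. That pairing of decomposables is the determinant $\det(\langle v_i,Av_j\rangle)=\det(v_i^T A v_j)$, which is exactly the Gram matrix of the restriction of $q_A(x)=x^TAx$ to $U$. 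This is the Cauchy--Binet/compound-matrix identity, and I expect it to be the main obstacle only in a bookkeeping sense: one must check that the identification of $\bigwedge^k A$ as a quadratic form on $\bigwedge^k W^*$ is compatible with the natural pairing, after which the identity is immediate and basis-independent (rescaling the $v_i$ scales both sides by the same factor, so the vanishing is well defined on $\PP(\bigwedge^k W^*)$).

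Finally I would invoke the standard tangency criterion: the $(k-1)$-plane $\PP(U)$ is tangent to $Q(A)$ if and only if the restriction $q_A|_U$ is degenerate, i.e.\ $\det(v_i^TAv_j)=0$. Indeed, if the Gram matrix is singular there is a radical vector $0\neq w\in U$ with $w^TAv=0$ for all $v\in U$; then $[w]\in Q(A)\cap\PP(U)$ and $U$ lies in the tangent hyperplane $\{x:w^TAx=0\}$, so $\PP(U)$ is tangent at $[w]$, and conversely tangency at a point $[w]$ forces $w$ into the radical of $q_A|_U$. Combining this with the identity of the previous paragraph shows that a point of $\GG(k-1,\PP(W^*))$ lies on $Q(\bigwedge^k A)$ if and only if the corresponding $(k-1)$-plane is tangent to $Q(A)$, which is the assertion of the lemma.
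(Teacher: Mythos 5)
Your proof is correct and takes essentially the same approach as the paper's: both characterize tangency of $\PP(U)$ to $Q(A)$ by the degeneracy of the Gram matrix $\left(v_i^T A v_j\right)_{i,j}$ of the restricted quadratic form, and then identify its determinant with the evaluation of $\bigwedge^k A$ on the Plücker vector $v_1\wedge\cdots\wedge v_k$. The only difference is cosmetic --- you spell out both directions of the tangency--degeneracy equivalence via the radical vector, which the paper treats more briefly.
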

\begin{proof}
Recall that for a point $[w]$ on $Q(A)$, the tangent space at $[w]$ is equal to
\[
T_{[w]}Q(A) = \{x \in W \mid x^T \cdot A \cdot w = 0\}.
\]
Hence, the space $U=\langle u_1,\dots,u_k\rangle$ is tangent to the quadric $Q(A)$ if and only if there exist $p_1,\dots,p_k\in \CC$ not all equal to zero, such that
$u^T \cdot A \cdot (\sum_{i=1}^{k}{p_iu_i})=0$
for any $u\in U$.
This happens if and only if the $k\times k$ matrix with $(i,j)$-entry $u_i^T A u_j$ is degenerate, i.e.~the determinant of that matrix equals zero. But that determinant equals $(\bigwedge_{i=1}^k u_i)^T\cdot(\bigwedge^kA)\cdot(\bigwedge_{i=1}^k u_i),$ i.e.~the evaluation of the quadric $Q(\bigwedge^kA)$ on the point of the Grassmannian, $\bigwedge_{i=1}^k u_i$ corresponding to the space $U$.
\end{proof}
So a general point in $\CQ(W)$ is given by a smooth quadric together with its locus of tangent lines, its tangent planes, and so on. However, since in \Cref{def:CQ} we took a closure, there are other points in $\CQ(W)$.
\begin{example}\label{eg:completeConics1}
	Let $W=\CC^3$. For every $\varepsilon \in \CC\setminus {\{0\}}$, the point
	\[
	(A_1,A_2)
	=
	(
	\begin{pmatrix}
	-\varepsilon & 0 &0 \\ 0 & \varepsilon & 0 \\ 0 & 0 & 1
	\end{pmatrix},
	\begin{pmatrix}
	-1 & 0 &0 \\ 0 & 1 & 0 \\ 0 & 0 & \varepsilon
	\end{pmatrix}
	)
	\in \PP(S^2W) \times \PP(S^2(W^*))
	\]
	is contained in $\CQ(W)$, as $A_2$ equals the adjugate of $A_1$ up to scaling. Geometrically: $Q(A_1) \subset \PP(W^*)$ is the smooth conic with equation $\varepsilon x_0^2 = \varepsilon x_1^2 + x_2^2$. $Q(A_2) \subseteq \PP(W)$ is the dual conic; it is the space of lines tangent to $Q_1$, and has equation $\beta_0^2 = \beta_1^2 + \varepsilon \beta_2^2$, where $\beta_i$ are the coordinates on $W$.
	
	Taking the limit $\varepsilon \to 0$, we find that
	\[
	(B_1,B_2)=
	(
	\begin{pmatrix}
	0 & 0 &0 \\ 0 & 0 & 0 \\ 0 & 0 & 1
	\end{pmatrix},
	\begin{pmatrix}
	-1 & 0 &0 \\ 0 & 1 & 0 \\ 0 & 0 & 0
	\end{pmatrix}
	)
	\in \CQ(W).
	\]
		
	Now $Q(B_1)$ is the double line $x_2^2=0$ in $\PP(V^*)$, and $Q(B_2) \subset \PP(V)$ is a degenerate conic given by the equation $\beta_0^2=\beta_1^2$. It is the locus of all lines in $\PP(V^*)$ that pass through $[1:1:0]$ or $[1:-1:0]$. See also \Cref{fig:completeConicDegen}.
	
	Note that the quadric $Q(B_2)$ cannot be recovered from the double line $Q(B_1)$: we need the extra data of two marked points on $Q(B_1)$. This corresponds to the fact that $B_1 \in \PP(S^2 W)$ is in the locus that was blown up.
		\begin{figure}
			\centering
			\includegraphics[width=0.32\textwidth]{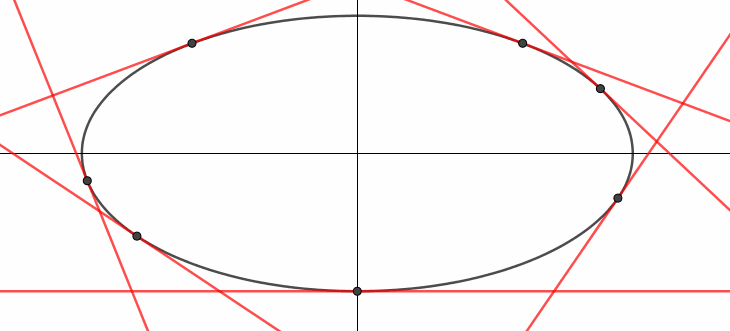}
			\includegraphics[width=0.32\textwidth]{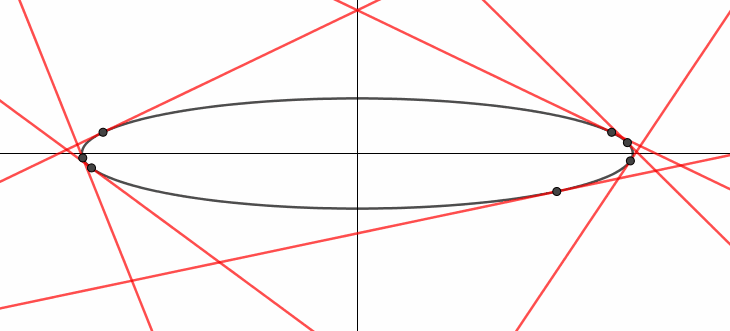}
			\includegraphics[width=0.32\textwidth]{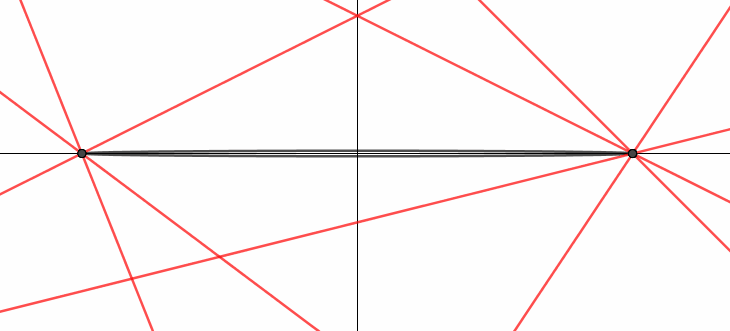}
			\caption{A conic degenerating to a double line with two marked points.}
			\label{fig:completeConicDegen}
		\end{figure}
	
	The complete conics can be classified into four types (see also \Cref{fig:completeConics4Types}):
	\begin{itemize}
		\item $A_1$ and $A_2$ both have rank $3$. Then $Q(A_1)$ is a smooth conic and $Q(A_2)$ is its dual.
		\item $A_1$ has rank $1$ and $A_2$ has rank $2$, as in the example above. Then $Q(A_1)$ is a double line and $Q(A_2)$ is the locus of all lines through 2 marked points on $Q(A_1)$.
		\item $A_1$ has rank $2$ and $A_2$ has rank $1$. This is dual to the case above. Then $Q(A_1)$ is a union of two lines, and $Q(A_2)$ is a double line in dual space.
		\item $A_1$ and $A_2$ both have rank $1$. Here $A_2$ is a rank one matrix that vanishes on the image of $A_1$, and $Q(A_1)$ and $Q(A_2)$ are both double lines.
	\end{itemize}
	\begin{figure}
		\centering
		\begin{tabular}{|r|l|} %
			\hline
			$\rk(A_1)=3, \rk(A_2)=3$ & $\rk(A_1)=1, \rk(A_2)=2$ \\
			\includegraphics[height=9em]{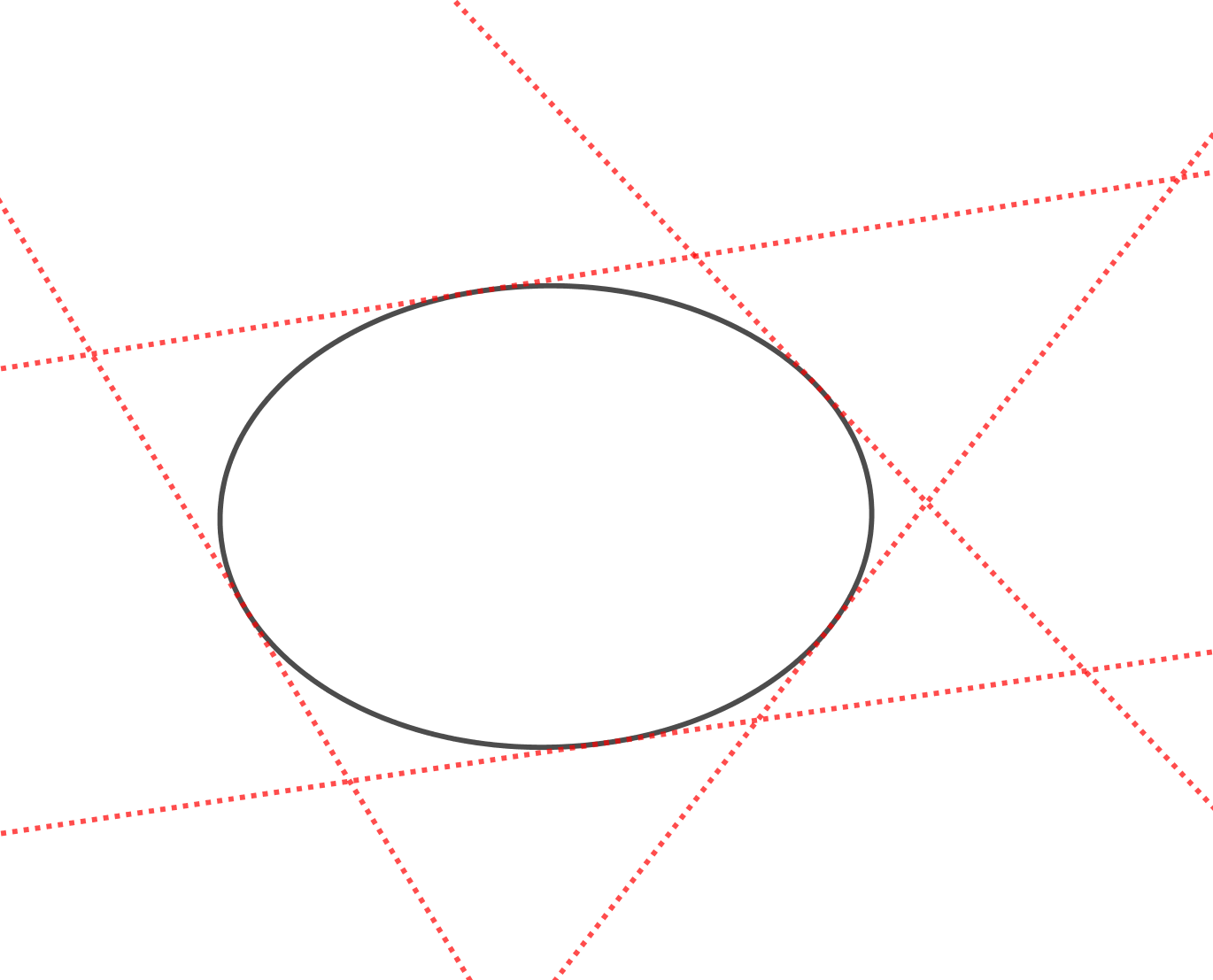} & \includegraphics[height=9em]{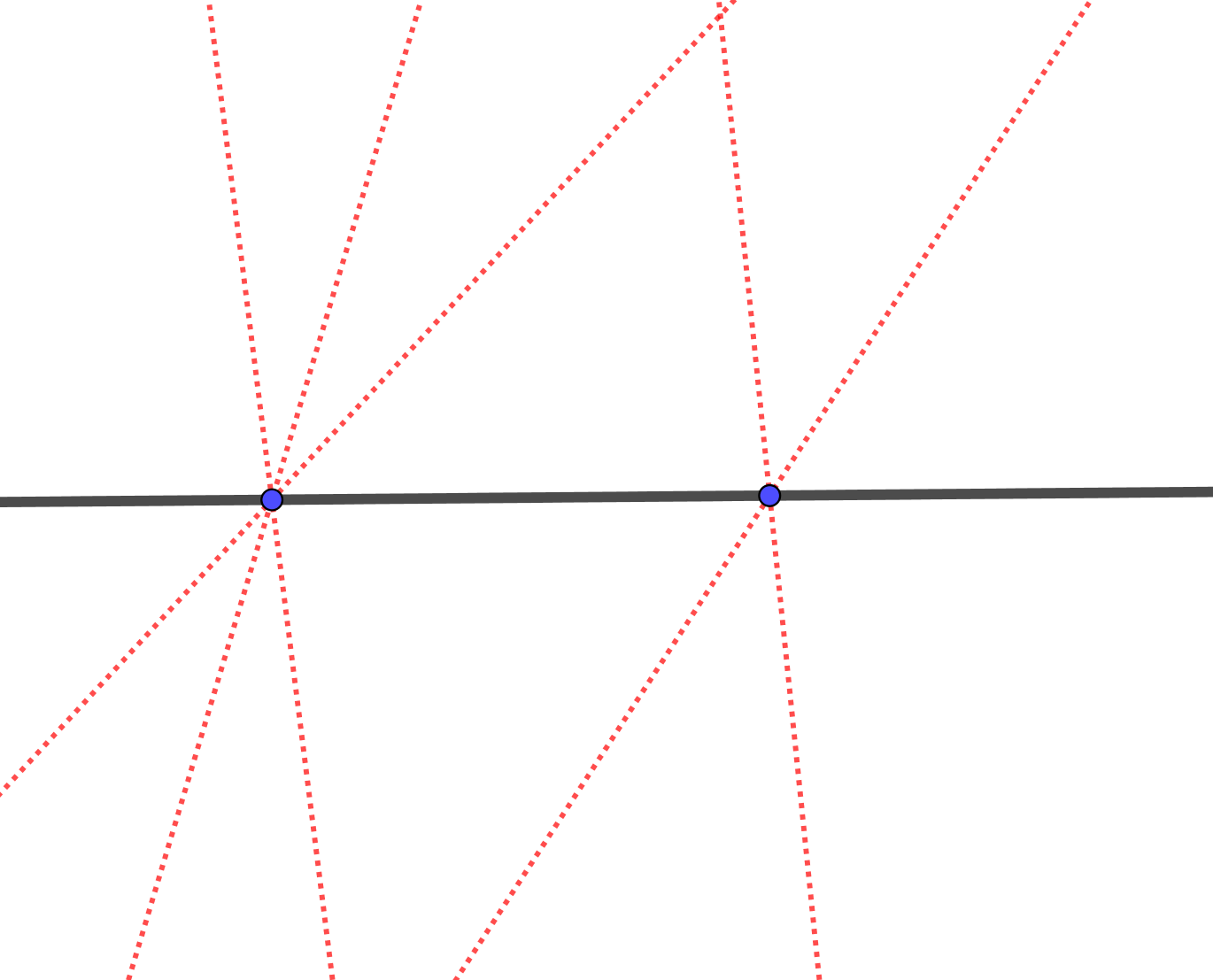} \\
			\hline
			$\rk(A_1)=2, \rk(A_2)=1$ & $\rk(A_1)=1, \rk(A_2)=1$ \\
			\includegraphics[height=9em]{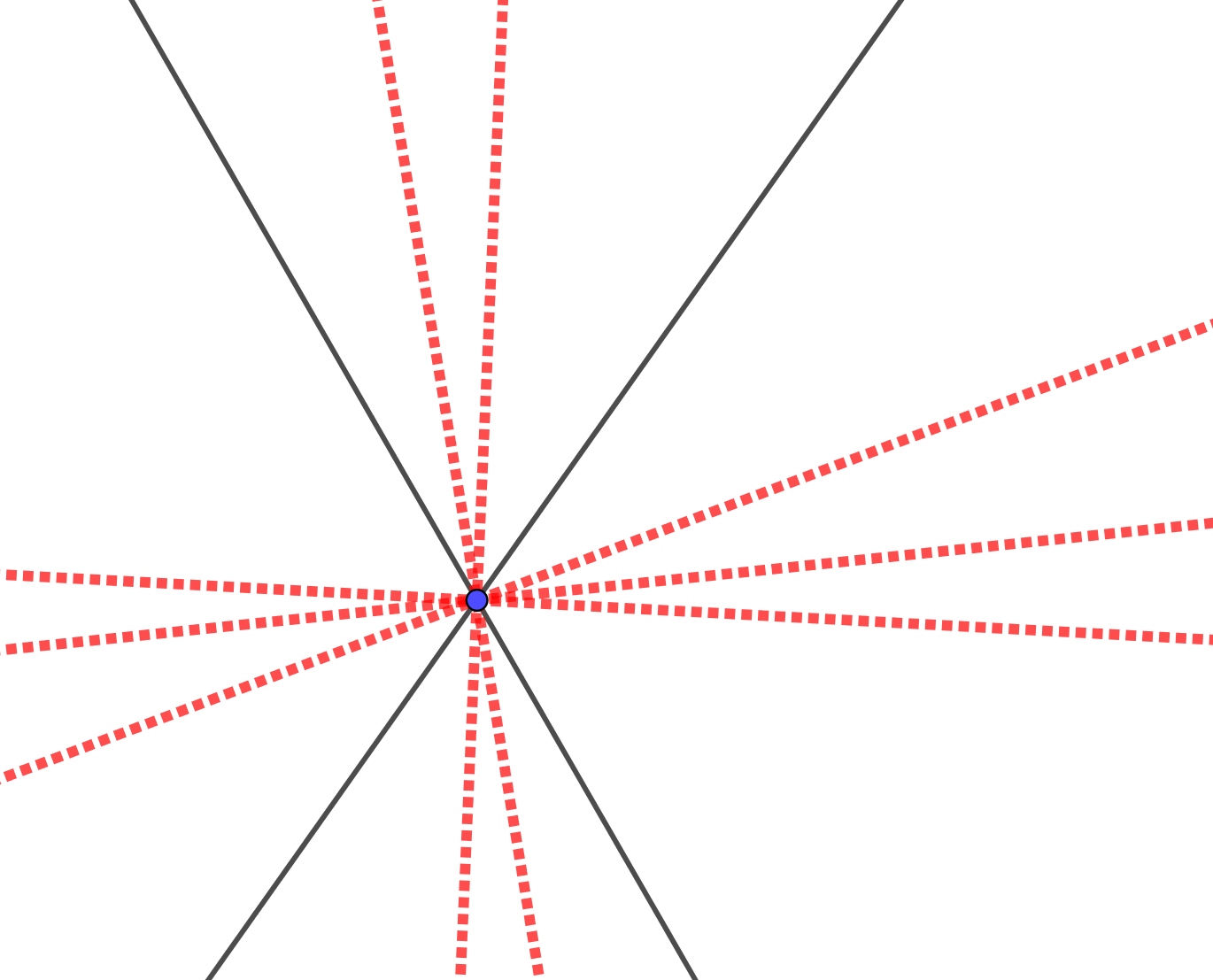} & \includegraphics[height=9em]{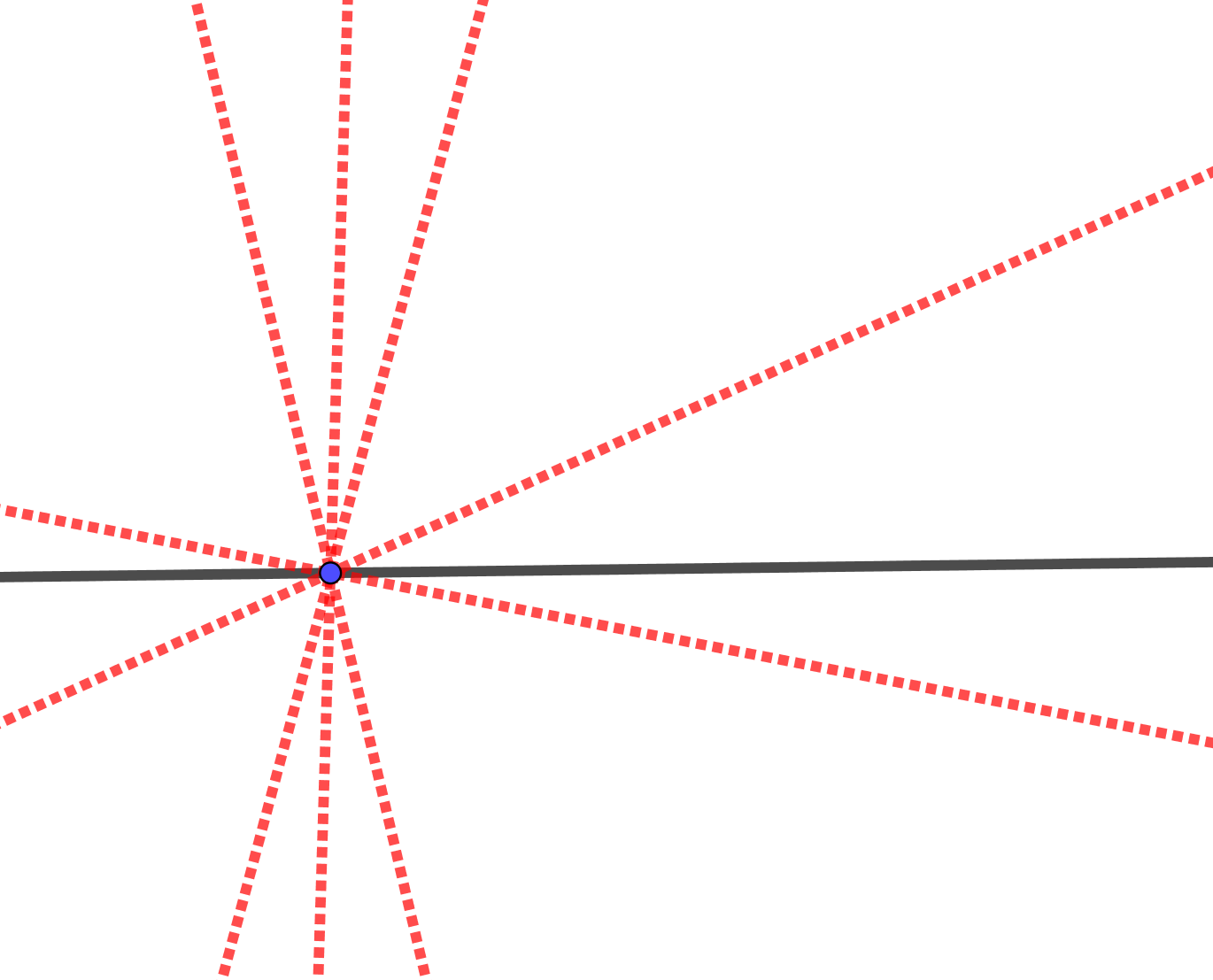}\\
			\hline
		\end{tabular}
		\caption{The four types of complete conics}
		\label{fig:completeConics4Types}
	\end{figure}
\end{example}

\begin{example}
	\begin{figure}
		\centering
		\includegraphics[width=\textwidth]{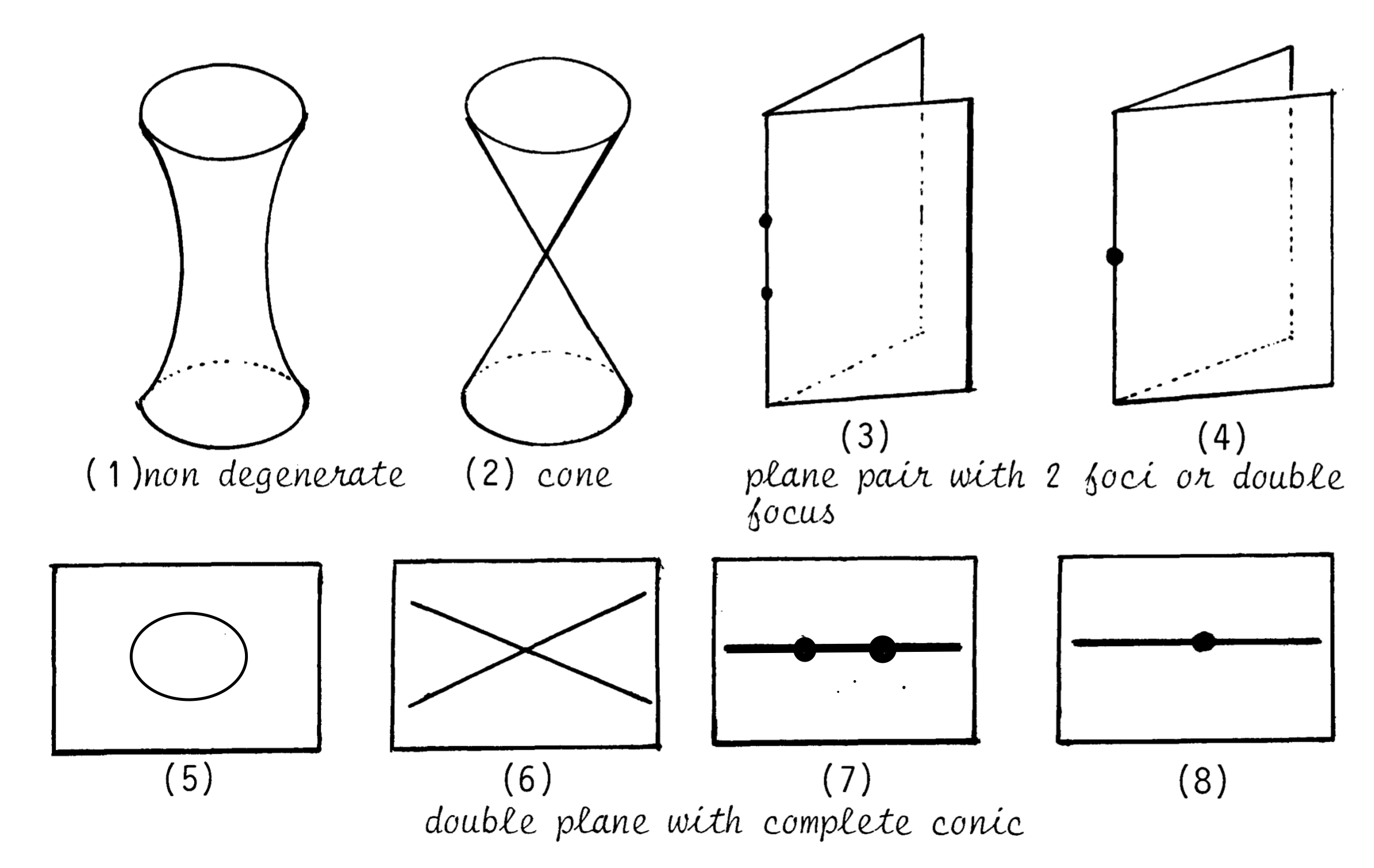}
		\caption{Complete quadrics in 3 dimensions. Picture taken from \cite{Vainsencher}.}
		\label{fig:CQ3D}
	\end{figure}
\Cref{fig:CQ3D} shows the classification of complete quadrics in the case $d=4$. More precisely, the pictures show $Q(A_1)$ and the additional data needed to determine $Q(A_2)$ and $Q(A_3)$.
\begin{enumerate}
	\item $\rk(A_1)=4,\rk(A_2)=4,\rk(A_3)=4$.
	\item $\rk(A_1)=3,\rk(A_2)=3,\rk(A_3)=1$. Here $Q(A_2)$ is the locus of lines tangent to the cone $Q(A_1)$, and $Q(A_3)$ is the double locus of all planes passing through the vertex.
	\item $\rk(A_1)=2,\rk(A_2)=1,\rk(A_3)=2$. Here $Q(A_2)$ is the double locus of all lines passing through the marked line, and $Q(A_3)$ is the locus of all planes passing through one of the marked points.
	\item $\rk(A_1)=2,\rk(A_2)=1,\rk(A_3)=1$. Again $Q(A_2)$ is the double locus of all lines passing through the marked line, and $Q(A_3)$ is the double locus of all planes passing through the marked point.
	\item $\rk(A_1)=1,\rk(A_2)=3,\rk(A_3)=3$. Here $Q(A_2)$ is the locus of all lines intersecting the marked conic, and $Q(A_3)$ is the locus of all planes tangent to the marked conic.
	\item $\rk(A_1)=1,\rk(A_2)=2,\rk(A_3)=1$. Again $Q(A_2)$ is the locus of all lines intersecting the marked degenerate conic, and $Q(A_3)$ is the double locus of all planes passing through the marked point.
	\item $\rk(A_1)=1,\rk(A_2)=1,\rk(A_3)=2$. Here $Q(A_2)$ is the double locus of all lines intersecting the marked line, and $Q(A_3)$ is the locus of all planes passing through one of the marked points.
	\item $\rk(A_1)=1,\rk(A_2)=1,\rk(A_3)=1$. Here $Q(A_2)$ is the double locus of all lines intersecting the marked line, and $Q(A_3)$ is the double locus of all planes passing through the marked point.
\end{enumerate}
\end{example}

\begin{prop} \label{prop:muGivesNumberOfQuadrics}
	The number of smooth quadric hypersurfaces in $\PP^{d-1}=\PP(W^*)$ passing through $a$ given general points and tangent to $b$ given general hypersurfaces (where $a+b=\binom{d+1}{2}-1$) is equal to $\mu_a(V)=\mu_b(V)$.
\end{prop}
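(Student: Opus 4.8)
The plan is to read the two enumerative conditions as pullbacks of hyperplane classes under the two projections $\pi\colon \CQ(W)\to\PP(V)$ and $\pi'\colon\CQ(W)\to\PP(V^*)$, and then to recognize the characteristic number as an intersection product on the smooth model $\CQ(W)$. First I would set up the dictionary. Recall that $\PP(V)=\PP(S^2W)$ is the space of quadrics $Q(A)$ in $\PP(W^*)$. Passing through a fixed general point $p\in\PP(W^*)$ is the linear condition $p^T A p=0$ on $A$, hence defines a divisor in the class $H_1=\pi^*\fO_{\PP(V)}(1)$; since all hyperplanes of a projective space are linearly equivalent, this special representative carries the generic class $H_1$. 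Dually, a hyperplane $H\subset\PP(W^*)$ (a degree-one hypersurface) is a point $[H]\in\PP(W)$, and $Q(A)$ is tangent to $H$ exactly when $[H]$ lies on the dual quadric $Q(\adj A)$; as $\adj A=\grad F(A)$, this is a linear condition on the image point, i.e.\ a divisor in the class $H_2=(\pi')^*\fO_{\PP(V^*)}(1)$. This is the classical reason why tangency to a hyperplane is the correct \emph{linear} tangency condition: on the compactification $\CQ(W)$ it is genuinely the class $H_2$.

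Next, write $N:=\binom{d+1}{2}-1=\dim\PP(V)=\dim\CQ(W)$, so that $a+b=N$. I would identify the sought-after quadrics with the points of $\CQ(W)$ lying in the intersection of $a$ general divisors from $|H_1|$ and $b$ general divisors from $|H_2|$. Granting that this intersection is finite, reduced, and entirely contained in the open locus $U\subset\CQ(W)$ of smooth quadrics, its cardinality equals $\int_{\CQ(W)}H_1^a H_2^b$. By \eqref{eq:muChow} applied to $L=V$ (so that $[\tilde L]=[\CQ(W)]$), this integral is exactly $\mu_b(V)$, since $\mu_i(V)=\int_{\CQ(W)}H_1^{N-i}H_2^i$ and here $i=b$, $N-i=a$.

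The main obstacle is precisely the parenthetical hypothesis of the previous paragraph: transversality, and above all the claim that no solution escapes to the boundary $\CQ(W)\setminus U$. For transversality and reducedness I would invoke Kleiman's theorem, using that $PGL(W)$ acts transitively on the parameter space $\PP(W^*)$ of points and on the parameter space $\PP(W)$ of hyperplanes, and acts equivariantly on $\CQ(W)$; thus for general $p_i$ and $H_j$ the corresponding divisors meet transversally. To confine the solutions to $U$, note that $\CQ(W)\setminus U$ is stratified by the $PGL(W)$-invariant degeneration loci $S_r$ of \eqref{eq:Sr}, each of dimension $<N$; applying Kleiman stratum by stratum shows that $N$ general conditions have empty intersection with every such stratum, provided each cut genuinely drops the dimension, i.e.\ provided $H_1|_{S_r}$ and $H_2|_{S_r}$ are nontrivial. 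This holds because they are pullbacks of ample classes under the natural maps from $S_r$ to $\PP(V)$ and $\PP(V^*)$. This boundary control is the historical heart of the characteristic-number problem (the ``$32$ versus $3264$'' phenomenon for conics), and it is exactly what the complete-quadrics compactification is engineered to handle.

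Finally, the equality $\mu_a(V)=\mu_b(V)$ follows from the duality involution $\sigma\colon\CQ(W)\to\CQ(W)$ reversing the tuple $(A_1,\dots,A_{d-1})\mapsto(A_{d-1},\dots,A_1)$, which on $U$ sends $A\mapsto\adj A$. It exchanges the two projections $\pi\leftrightarrow\pi'$, hence $H_1\leftrightarrow H_2$, giving $\mu_a(V)=\int_{\CQ(W)}H_1^{N-a}H_2^a=\int_{\CQ(W)}H_1^a H_2^b=\mu_b(V)$; geometrically this is the self-duality exchanging the roles of points and tangent hyperplanes, which is why the count is symmetric in $a$ and $b$.
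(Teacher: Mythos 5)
Your proof is correct, and it follows the same overall strategy as the paper -- interpret the point and tangency conditions as divisors in the classes $H_1$, $H_2$, apply Kleiman's transversality theorem on the locus of smooth quadrics where $PGL(W)$ acts transitively, and then exclude boundary solutions by a stratum-by-stratum dimension count -- but it is executed on a different space. The paper never leaves the graph $\Gamma\subset\PP(S^2W)\times\PP(S^2W^*)$, consisting of pairs $(C,D)$ with $CD=\lambda I_d$: there the rank strata $\Gamma_{x,y}=\{(C,D)\in\Gamma \mid \rk C=x,\ \rk D=y\}$ are each homogeneous under $GL(W)$, so Kleiman applies verbatim on every stratum and the boundary intersection is empty purely for dimension reasons; no resolution and no appeal to \eqref{eq:muChow} is needed, since $\mu_a(V)$ is by definition $\int_\Gamma H_1^aH_2^b$ via \eqref{eq:muIntersectionProduct}. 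You instead work on $\CQ(W)$, which costs you one technical point: the degeneration divisors $S_r$ are \emph{not} $PGL(W)$-homogeneous, so ``applying Kleiman stratum by stratum'' to them is not literally licensed -- Kleiman requires a transitive action on the ambient stratum. Fortunately the argument you actually write down does not need Kleiman there: for any irreducible $Z\subset S_r$, a general point-condition $\{A\mid p^TAp=0\}$ (resp.\ tangency condition) cannot contain $\pi(Z)$ (resp.\ $\pi'(Z)$), since containment for general $p$ would force $p^TAp=0$ for all $p$ and all $A\in\pi(Z)$, i.e.\ $A=0$; hence each of the $N$ cuts either empties $Z$ or drops its dimension by one, and $\dim S_r=N-1$ finishes it. (Alternatively, since $\CQ(W)$ is a wonderful compactification, its boundary is a finite union of $PGL(W)$-orbits and Kleiman does apply orbit by orbit.) What your route buys is a smooth ambient space and an explicit mechanism for the symmetry $\mu_a(V)=\mu_b(V)$ via the duality involution reversing $(A_1,\dots,A_{d-1})$, which the paper leaves implicit in the symmetry of $\Gamma$ under swapping factors; what the paper's route buys is that the boundary-exclusion step is a one-line application of Kleiman, with no separate base-point-freeness or orbit-structure discussion, and no passage through the resolution at all.
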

\begin{proof}
	Let $A \subset \PP(S^2 W)$ be the subspace of all quadrics passing through our $a$ given points. This is a hyperplane of codimension $a$. Similarily, let $B \subset \PP(S^2 W^*)$ be the codimension $b$ hyperplane of all quadrics in dual space containing the $b$ given points in dual space.
	
	As usual, we will write $\Gamma$ for the graph of the inversion map $\PP(S^2 W) \dashrightarrow \PP(S^2 W^*)$, and $\pi_1$ and $\pi_2$ for the projections to $\PP(S^2 W)$ and $\PP(S^2 W^*)$ respectively. In this case, it is known that
	$\Gamma$ consists of all pairs $(C,D) \in \PP(S^2 W) \times \PP(S^2 W^*)$ of symmetric matrices for which $C\cdot D = \lambda I_d$ for some $\lambda \in \CC$. The smooth locus of $\Gamma$ are the pairs $(C,D)$ where $C$ and $D$ are invertible (and hence $D=C^{-1}$); the singular locus of $\Gamma$ consists of the pairs $(C,D) \in \Gamma$ where $C$ and $D$ are singular matrices (and hence $C\cdot D =0$).
	\begin{claim}
	The subvarieties $\pi_1^{-1}(A)$ and $\pi_2^{-1}(B)$ of $\Gamma$ intersect transversally, and all intersection points lie on the smooth locus of $\Gamma$.
	\end{claim}
	\begin{proof}
		We first show that the intersection is transverse on the smooth locus of $\Gamma$. We can rewrite our intersection as
		$$
		\pi_1^{-1}(H_1) \cap \ldots \cap \pi_1^{-1}(H_a) \cap \pi_2^{-1}(H'_1) \cap \ldots \cap \pi_2^{-1}(H'_b).
		$$
		Note that on the smooth locus of $\Gamma$, we have a transitive action of $GL(W)$, and moreover each $\pi_i^{-1}(H_j)$ is a general element of its orbit. Then the intersection is transverse by repeatedly applying Kleiman's transversality theorem (see \cite[Theorem 1.7]{eisenbud20163264}).
		
		To show that there are no intersection points in the singular locus, we observe that $\Gamma$ can be written as a union of locally closed subvarieties $\Gamma_{x,y}:=\{(C,D) \in \Gamma \mid \rk(C)=x, \rk(D)=y\}$, and $GL(W)$ acts transitively on each $\Gamma_{x,y}$. The smooth locus is exactly $\Gamma_{d,d}$; we fix $0< x,y <d$ and show there are no intersection points on $\Gamma_{x,y}$. Indeed: observe that every $\pi_i^{-1}(H_j) \cap \Gamma_{x,y}$ is a codimension one subvariety which is a general element of its $GL(W)$-orbit. Hence we can again apply Kleiman's transversality theorem, but this time our transverse intersection is empty for dimension reasons.
	\end{proof}
	Then by (\ref{eq:muIntersectionProduct}) and the claim:
	\begin{align*}	
	\mu_a(V) =& \int_{\Gamma}{H_1^a H_2^b} = \int_{\Gamma}{[\pi_1^{-1}(A)]\cdot[\pi_2^{-1}(B)]} \\ =& \int_{\Gamma}{[\pi_1^{-1}(A) \cap \pi_2^{-1}(B)]} = \# \left(\pi_1^{-1}(A) \cap \pi_2^{-1}(B)\right).
	\end{align*}
\end{proof}

\subsection{Complete skew-forms}
This section is completely analogous to \Cref{sec:CQ}, replacing symmetric matrices by skew-symmetric matrices. As the determinant of a skew-symmetric matrix of odd size is always zero, we now let $W$ be a vector space of even dimension $d=2e$, $V=\bigwedge^2W$, and $F = \pf \in S^e V^*$ the Pfaffian of a general skew-symmetric matrix.
\begin{remark}
	Since $\det = \pf^2 \in S^d V$, the maps $\grad(\det)$ and $\grad(\pf)$ agree as maps $\PP(V) \dashrightarrow \PP(V^*)$. So we could also just have taken $F=\det \in S^d V$.
\end{remark}
\begin{definition} \label{def:CS}
	The variety $\CS(W)$ of \emph{complete skew-forms} is the closure of the image of the set of invertible matrices under the map
	\begin{align*}
	\varphi: \PP(\bigwedge^2W) \dashrightarrow {\PP\left(\bigwedge^2W\right)} \times {\PP\left(\bigwedge^2(\bigwedge^2W)\right)} \times \cdots \times {\PP\left(\bigwedge^2(\bigwedge^{d-1}W)\right)},
	\end{align*}
	sending a matrix $A$ to $(A,\bigwedge^2 A, \ldots, \bigwedge^{d-1} A)$. It is a complete variety with respect to $F = \pf \in S^e V^*$.
\end{definition}

As in the symmetric case, the space of complete skew-forms can be constructed by blowing up $\PP(V)$
along the subvariety of rank two matrices, then the strict transform of the subvariety of matrices of
rank at most four, and so on.
As such, it admits a series $S_1,\ldots,S_{e-1}$ of special classes of divisors. %

For more on complete skew-forms, see \cite{Massarenti2}.

\subsection{General approach via partial derivatives} All of the examples in this section are a special case of the following construction: let $d=\deg F$, and for $k=1,\ldots,d-1$, let $D^k_1, \ldots,D^k_{m_k}$ be a basis of the span of all $k$th order partial derivatives of $F$, and let $\tilde{\Gamma}$ be the normalization of the image of the rational map
\begin{equation} \label{eq:partialDerivatives}
\Delta F: \PP(V) \to \PP(V) \times \PP(\bigwedge^2 V) \times \cdots \times \PP(\bigwedge^{d-1} V):
\end{equation}
\[
[x] \mapsto ([D^{d-1}_1(x):\ldots:D^{d-1}_{m_{d-1}}(x)],\ldots,[D^1_1(x):\ldots:D^{1}_{m_1}(x)]).
\]
In all our examples, it turned that $\tilde{\Gamma}$ is a complete variety. However, this is not the case for an arbitrary polynomial $F$. In \cite{stefana}, the question of when such a $\tilde{\Gamma}$ is smooth (and hence a complete variety) is considered, and some sufficient conditions are proven.

\begin{remark}
	Since $\PP(\bigwedge^k V) \cong \PP(\bigwedge^{d-k} V^*)$, we have an isomorphism
	\begin{align*}
	 \PP(V) \times \PP(\bigwedge^2 V) \times \cdots \times \PP(\bigwedge^{d-1} V) &\to \PP(V^*) \times \PP(\bigwedge^2 V^*) \times \cdots \times \PP(\bigwedge^{d-1} V^*) \\
	 (A_1,\ldots,A_{d-1}) & \mapsto (A_{d-1},\ldots,A_1).
	\end{align*}
	This map induces isomorphisms $\Per(V)\cong \Per(V^*)$, $\ComplCol(W) \cong \ComplCol(W^*)$, $\CQ(W) \cong \CQ(W^*)$, $\CS(W) \cong \CS(W^*)$.
\end{remark}

\section{Cohomology of the permutohedral variety}

\subsection{The permutohedral variety} \label{sec:perCohomology}
The aim of this subsection is to describe the Chow ring of the permutohedral variety $\Per_n$ introduced in Section \ref{subs:permutvar}. Our main tool will be toric geometry. Recall that an \emph{algebraic torus} is the multiplicative group $T_n:=(\CC^*)^n$. We will refer to $T_n$ simply as a torus.

A variety $X$ is called \emph{toric} if $T_n$ acts on $X$ and one of the orbits is dense. Depending on the context many authors also assume that $X$ is normal. We will not recall all the constructions of toric geometry as this is outside of the scope of this survey. The interested reader is referred to \cite{Cox, Fulton, michalek2018selected}, \cite[Chapter 8]{NonlinearAlgebra}. Here we just recall the most important facts about projective toric varieties $X$ associated to a lattice polytope $P\subset\RR^n$. We recall that a polytope is called a lattice polytope if all of its vertices belong to $\ZZ^n$.
\begin{enumerate}
\item The variety $X$ is the closure of the image of the monomial map, where the monomials correspond to lattice points in $P$. (Here there are two conventions, and following \cite{Fulton} one would first need to rescale $P$.)
\item There is a bijection between the $T_n$-orbits in $X$ and faces of $P$. The dimension of the orbit (as a complex algebraic variety) equals the dimension (as a real polytope) of the corresponding face.
\item The permutohedral variety $\Per_n$ is a smooth, projective, toric variety.
\item The polytope $\PPer_n$ associated to the permutohedral variety $\Per_n$ is known as the \emph{permutohedron}. It is the convex hull of the $(n+1)!$ lattice points $(\pi(1),\pi(2),\dots,\pi(n),\pi(n+1))\in\ZZ^{n+1}$ corresponding to the permutations $\pi\in S_{n+1}$.
\end{enumerate}
Our first aim is to provide a good understanding of the polytope $\PPer_n$. Let $[n+1]:=\{1,\dots,n+1\}$. A chain of strictly increasing subsets $\emptyset=A_0\subsetneq A_1\subsetneq\dots \subsetneq A_k=[n+1]$ where $A_i\subset [n+1]$ is called a \emph{flag} on $[n+1]$ of length $k$. A flag is called \emph{complete} if $k=n+1$.
\begin{example}\label{exm:permn=2}
For $n=2$ we have the following flags:
\begin{enumerate}
\item $\emptyset\subset [3]$, which corresponds to $k=1$.
\item Six distinct flags corresponding to $k=2$. For three of them $A_1$ is a singleton and for the other three $A_1$ has two elements.
\item Six complete flags, for example $\emptyset\subset \{2\}\subset \{2,3\}\subset \{1,2,3\}$.
\end{enumerate}
\end{example}
The faces of $\PPer_n$ are in bijection with flags on $[n+1]$, where a flag of length $k$ corresponds a face of dimension $n+1-k$. We first note that a complete flag $A_0\subsetneq A_1\subsetneq
\dots\subsetneq A_{n+1}$ corresponds to the permutation $\pi$ defined by $\{\pi(1)\}=A_1$, $\{\pi(1),\pi(2)\}=A_2,\dots,\{\pi(1),\dots,\pi(j)\}=A_j,\dots$.

A vertex corresponding to a complete flag $A_0\subsetneq A_1\subsetneq
\dots\subsetneq A_{n+1}$ belongs to a face corresponding to a flag $B_0\subsetneq B_1\subsetneq
\dots\subsetneq B_k$ if and only if the set $\{B_0,\dots,B_k\}$ is a subset of the set $\{A_0,\dots,A_{n+1}\}$, i.e.\ the flag $A_{\bullet}$ refines the flag $B_{\bullet}$.
\begin{example}
We continue Example~\ref{exm:permn=2}. Here $\PPer_2$ is a hexagon. The unique flag for $k=1$ in Example \ref{exm:permn=2} corresponds to the whole polytope. The six flags for $k=2$ are the six edges and the complete flags correspond to six vertices. This is presented in Figure \ref{fig:perm2}. %
\end{example}
\begin{figure}
\caption{Two dimensional permutohedron $\PPer_2$. The vertices are labelled by permutations, where the permuation $abc$ corresponds to the complete flag $\{a\}\subset\{a,b\}$. The edges are labelled by the set $A_1$ which uniquely defines the flag.}\label{fig:perm2}
\includegraphics[scale=0.3]{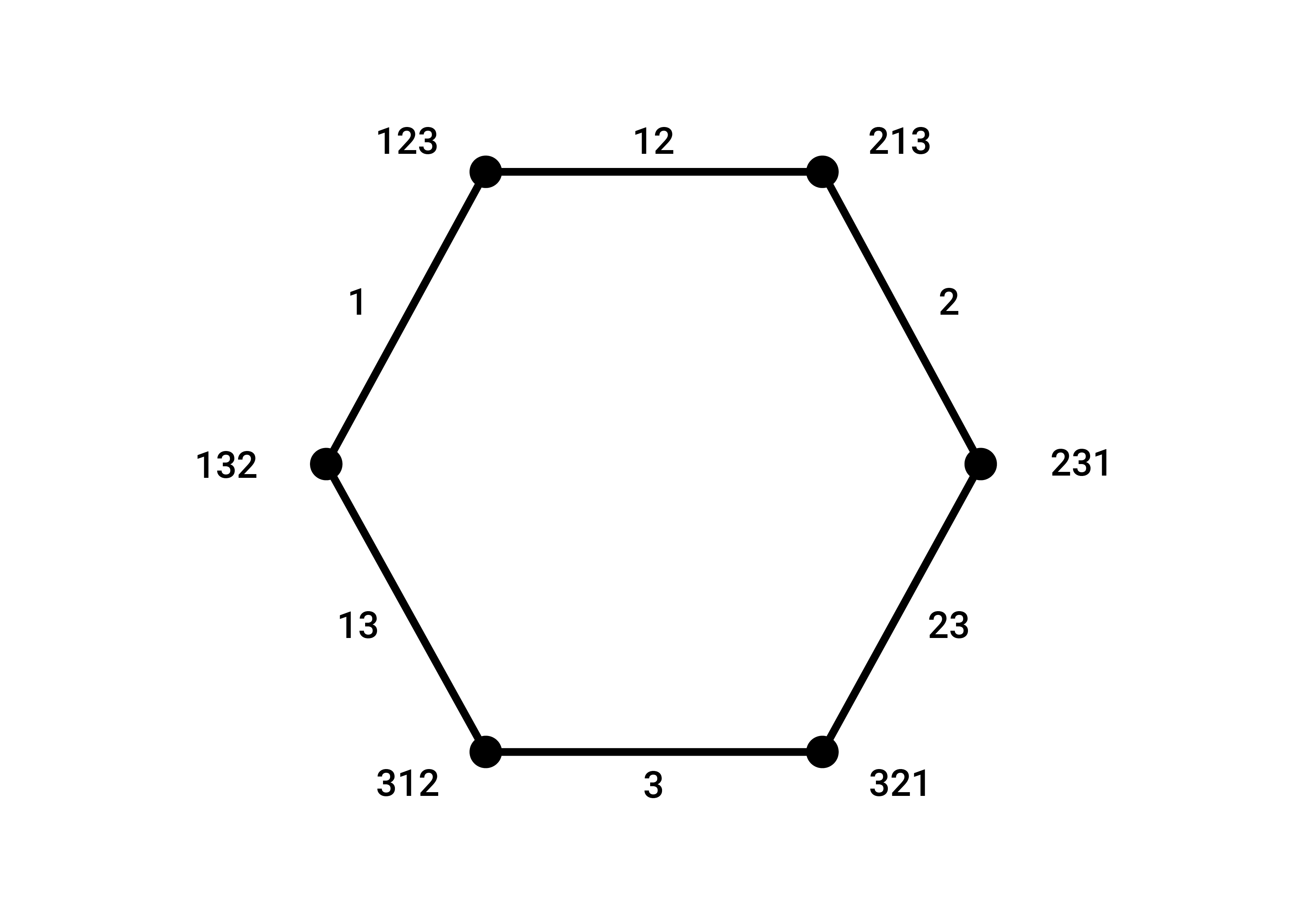}
\end{figure}

The big advantage of toric geometry is that in most cases it is enough to consider torus invariant structures. This is the case for the Chow ring. To avoid certain technicalities, as we are interested in $\Per_n$, we will assume that we deal with \emph{smooth}, projective, toric varieties and the Chow ring and cohomology ring are defined (i.e.~tensored) with $\QQ$.
As a vector space the Chow ring is linearly spanned by elements corresponding to torus orbit closures, i.e.~faces of the polytope. These are not linearly independent. Following \cite[Theorem 12.4.4]{Cox} we describe the ring structure.

The Chow ring is generated in degree one, i.e.~it is a polynomial ring $\QQ[x_1,\dots,x_s]$ where each $x_i$ corresponds to a given facet $F_i$ of the $n$ dimensional integral polytope $P\subset \RR^n$, modulo the ideal generated by the following relations:
\begin{enumerate}
\item $\prod_{i\in I} x_i=0$ whenever $\bigcap_{i\in I} F_i=\emptyset$,
\item \label{two} $\sum_{i=1}^s (n_{F_i})_j x_i=0$ for $j=1,\dots,n$ and $(n_{F_i})_j$ denotes the $j$-th coordinate of the primitive, integral normal vector $n_{F_i}\in\ZZ^n$ to the facet $F_i$.
\end{enumerate}
We note that our construction of $\PPer_n$ realizes it not as a full dimensional polytope, as the sum of coordinates is constant. For that reason, one often first shifts it to $0$ and considers it in the sublattice generated by the integral points inside $\PPer_n$.
\begin{example} \label{eg:Per2}
	We identify $\PPer_2$ with the hexagon that is the convex hull of $(0,0),(0,1),(1,2),(2,2),(2,1),(1,0)$. We have the six facets: $F_1=[(0,0),(0,1)]$, $F_2=[(0,1),(1,2)]$,$\dots$, $F_6=[(0,1),(0,0)]$ corresponding respectively to variables $x_1,x_{12},x_2,x_{23},x_3,x_{13}$.
	The generators of the ideal that are of the first type are: \[x_1x_2,x_1x_{23},x_1x_3,x_{12}x_{23},x_{12}x_3,x_{12}x_{13},x_2x_3,x_2x_{13},x_{23}x_{13}. \]
	The generators of the second type are:
	\[f=x_1+x_{12}-x_{23}-x_3,\quad g=-x_{12}-x_2+x_3+x_{13}.\]
	Hence the Chow ring equals:
	\[\frac{\QQ[x_1,x_{12},x_2,x_{23},x_3,x_{13}]}{(x_1x_2,x_1x_{23},x_1x_3,x_{12}x_{23},x_{12}x_3,x_{12}x_{13},x_2x_3,x_2x_{13},x_{23}x_{13}, f, g)}.\]
	Note that in this ring it holds that
	\begin{equation} \label{eq:topDegree}
	x_1x_{12}=x_2x_{12}=x_2x_{23}=x_{3}x_{23}=x_{3}x_{13}=x_1x_{13},
	\end{equation}
	and all other degree $2$ squarefree monomials are zero. So the map $\int_{\Per_2}$ identifies the top degree part of our Chow ring with $\QQ$ by sending each monomial in (\ref{eq:topDegree}) to $1$.
\end{example}
The first type of relations allows us to associate to any face $F$ a monomial $m_F:=\prod_{F\subset F_i} x_i$. The Chow ring, in degree $d$, is spanned by the monomials $m_F$ for $F$ of codimension $d$. The monomials $m_F$ are not independent in general, due to relations coming from point (\ref{two}). As one can see from the description above, the linear relations among $m_F$ are as follows:

 Fix a $(d+1)$-dimensional face $F$ and pass to the lattice spanned by $F$. We have $\sum_{F_i\subset F} (n_{F_i,F})_j m_{F_i}=0$, where the sum is over $d$-dimensional faces of $F$ and $n_{F_i,F}$ is the normal vector to $F_i$ in $F$.

\begin{example}
	Continuing \Cref{eg:Per2}: the permutohedral variety $\Per_2$ comes equipped with two maps $\pi, \pi': \Per_2 \to \PP^2$ as in (\ref{eq:diagramCompleteVariery}). The Chow ring of $\PP^2$ is isomorphic to $A^{\bullet}(\PP^2) = \QQ[\zeta]/(\zeta^3)$, where $\zeta$ is the hyperplane class. %
	The map $\pi$ of toric varieties corresponds to the map of polytopes that contracts the edges labeled $12$, $23$, $13$ to points, so we obtain the following triangle, which is the polytope associated to $\PP^2$:
	
	\begin{figure}[h]
		\begin{tikzpicture}[scale=2]
		\coordinate (a) at (0,0);
		\coordinate (b) at (1,0);
		\coordinate (c) at (0.5,1);
	
		\node (1) at (0.15,0.5) {1};
		\node (2) at (0.85,0.5) {2};
		\node (3) at (0.5,-0.15) {3};
		\node (4) at (1.2,0) {23};
		\node (5) at (-0.2,0) {13};
		\node (6) at (0.3,1) {12};
		\draw (a) -- (b) -- (c) --(a);
		\foreach \pt in {a,b,c} \fill[black] (\pt) circle (1pt);
		
		\end{tikzpicture}
	\end{figure}
	
	The hyperplane class in $\PP^2$ corresponds to an edge of this triangle, say the one labeled by $1$. The preimage of this edge is the union of the three edges in our hexagon labeled $1,12,13$. This means that the pullback of the hyperplane class is given by $H_1=\pi^*(\zeta)=x_1+x_{12}+x_{13}$.
	Analogously, $\pi'$ corresponds to contracting the edges $1,2,3$ of our hexagon, and we find that $H_2=\pi'^*(\zeta) = x_{23}+x_2+x_3$.
	
	We now compute the numbers $\mu_i := \mu_i(\PP^2)$ as in (\ref{eq:muChow}). For instance:
	$$
	\mu_1=\int_{\Per_2}{H_1H_2}=\int_{\Per_2}{(x_1+x_{12}+x_{13})(x_{23}+x_2+x_3)}=\int_{\Per_2}{x_2x_{12}+x_3x_{13}} =  2
	$$
	and similarly $\mu_0=\mu_2=1$. By \Cref{thm:HuhKatz}, these $\mu_i$ are the coefficients of the reduced characteristic polynomial of the matroid corresponding to $\CC^3 \subseteq \CC^3$, which is the uniform matroid of rank 3 on 3 elements.
\end{example}

\begin{remark}
 So far we have described elements of the Chow ring in the spirit of homology, that is as classes of subvarieties with coefficients. There is a dual description, in the spirit of cohomology, as functions on such classes. This was developed by Fulton and Sturmfels in \cite{fulton1997intersection}, through the theory of Minkowski weights. As this is the approach that was taken in e.g.~\cite{huh2012log} we next briefly describe it.
 We already know that the degree $d$ part $A^d(X)=F/R$ of the Chow ring is the vector space $F$ with basis given by (monomials associated to) codimension $d$-faces of the polytope, modulo the subspace $R$ generated by vectors corresponding to codimension $(d-1)$-faces.

 Let us now look at the dual vector space $(F/R)^*$. Canonically this is the kernel $K$ of the natural surjective restriction map $F^*\rightarrow R^*$. Hence, $\Hom_\QQ(A^d(X),\QQ)=K$. Clearly, the elements of $K$ are in particular elements of $F^*$, i.e.~linear functions on $F$. As a linear function is determined by its values on the basis, the elements of $F^*$ are naturally represented by functions from the set of codimension $d$-faces of the polytope to $\QQ$. Such a function $f$ belongs to $K$ if and only if for every codimension $(d-1)$-face $F$ we have: $\sum_{F_i\subset F} (n_{F_i,F}) f(F_i)=0$. Note that here we omit by the $j$-th coordinate. This is because a sum of each coordinate of given vectors equals zero if and only if the sum of those vectors is zero.

 The condition on the function $f$ described above is often referred to as the \emph{balancing condition} and a function satisfying it is called a \emph{Minkowski weight}. Hence one can also interpret the elements of the cohomology ring as Minkowski weights. The setting described above was probably first presented in \cite{mcmullen1996weights}. In current works it is most often expressed on the normal fan of the polytope.
\end{remark}
\section{Cohomology of complete quadrics}\label{completequad}
Let $V$ be an $n$-dimensional vector space.
The space of complete quadrics $\CQ(V)$ has two interesting series of divisors. The first series are the degeneracy classes $S_1, \ldots,S_{n-1}$ introduced in \Cref{rmk:degenCQ} (we slightly abuse notation and use $S_r$ both for the subvariety $S_r \subset \CQ(V)$ and for the corresponding class in $A^1(\CQ(V))$). The second series is denoted by $L_1, \ldots, L_{n-1}$, where $L_i := \pi_i^*([H]) \in A^1(\CQ(V))$, with $[H]$ the hyperplane class in $A^1(\PP(S^2\bigwedge^iV))$.

We will write $\phi(n,d)$ for the number $\mu_{d-1}(S^2V)$. In the light of \Cref{isgenericof,aresultof}, $\mu_{d-1}(S^2V)$ is actually the ML-degree of the linear concentration model associated to a generic $d$-dimensional subspace, as introduced in Section~\ref{section:linearConcentrationsModels}. In terms of the Chow ring:
\[
\phi(n,d) = \int_{CQ_n}{L_1^{\binom{n+1}{2} -d}L_2^{d-1}}.
\]

The next result was already know by Schubert~\cite{schubert1894allgemeine}, and for a modern approach, see \cite[Theorem 3.13]{Massarenti1}.
\begin{prop}\label{prop:relations}
	The classes $L_1,\ldots, L_{n-1}$ form a basis of $\Pic(\CQ(V))$, in which the classes
	$S_1, \ldots, S_{n-1}$ are given by the
	relations
	$$
	S_i= -L_{i-1}+2L_i-L_{i+1},
	$$
	with $L_0=L_n:=0$.
\end{prop}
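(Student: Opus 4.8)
The plan is to work entirely in $\Pic(\CQ(V))$ using the iterated blow-up of \Cref{def:CQ}, namely $\PP(S^2V) = X_0 \leftarrow X_1 \leftarrow \cdots \leftarrow X_{n-2} = \CQ(V)$, where $X_r \to X_{r-1}$ blows up the strict transform $Z_r$ of the locus of symmetric matrices of rank at most $r$. Each $Z_r$ has codimension $\binom{n-r+1}{2} \ge 3$ and is smooth at its generic point, so each blow-up increases the Picard rank by one. Writing $p \colon \CQ(V) \to \PP(S^2V)$ for the total contraction and $h := p^*[H]$ for the pulled-back hyperplane class (so that $h = L_1$, since $\pi_1 = p$), the exceptional prime divisors are exactly the degeneracy divisors $S_1,\dots,S_{n-2}$, and $\{h,S_1,\dots,S_{n-2}\}$ is a $\ZZ$-basis of the rank $n-1$ group $\Pic(\CQ(V))$. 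I would reduce the whole proposition to the two class identities
$$L_i = i\,h - \sum_{r=1}^{i-1}(i-r)\,S_r \quad(1\le i\le n-1), \qquad S_{n-1} = n\,h - \sum_{r=1}^{n-2}(n-r)\,S_r,$$
the second being the class of the strict transform of the determinant hypersurface, which is what $S_{n-1}$ is by the rank-$(n-1)$ description in \Cref{rmk:degenCQ}.

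The heart of the argument is a local multiplicity computation. On $X_0$ the map $\pi_i$ is given by the linear system $\Lambda_i \subseteq |\mathcal{O}(i)|$ spanned by the $i\times i$ minors, whose base locus is the rank $\le i-1$ locus, of codimension $\ge 2$; hence $L_i = \pi_i^*[H] = i\,h - \sum_r \mathrm{ord}_{S_r}(\Lambda_i)\,S_r$, the fixed divisor being supported only on the exceptional $S_r$. The key point is that the divisorial valuation $\mathrm{ord}_{S_r}$ restricts, on functions from $X_0$, to the order of vanishing along $Z_r$ already computed in $X_0$: the blow-ups with centers $Z_1,\dots,Z_{r-1}$ are isomorphisms over the generic point $\eta_r$ of the rank $\le r$ locus, since those centers lie in the rank $\le r-1$ locus, which does not contain $\eta_r$. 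I would then compute this order in Schur-complement coordinates: near $\eta_r$ a symmetric matrix is equivalent to a block form $\mathrm{diag}(B,\Sigma)$ with $B$ an invertible $r\times r$ block and $\Sigma$ the symmetric $(n-r)\times(n-r)$ Schur complement cutting out $Z_r$; an $i\times i$ minor of such a matrix is a product of an $a\times a$ minor of $B$ (a unit, with $a\le r$) and an $(i-a)\times(i-a)$ minor of $\Sigma$ (vanishing to order $i-a$), so the minimal order of vanishing, attained at $a=r$, is $i-r$. Thus $\mathrm{ord}_{S_r}(\Lambda_i)=i-r$ for $r<i$ and $0$ otherwise, and the identical computation with the single minor $\det$ (the case $i=n$) gives $\mathrm{ord}_{S_r}(\det)=n-r$, establishing both identities.

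It remains to assemble these. The matrix expressing $L_1,\dots,L_{n-1}$ in the basis $\{h,S_1,\dots,S_{n-2}\}$ is triangular with entries $\pm 1$ on the diagonal — indeed $L_1=h$ and the coefficient of $S_{i-1}$ in $L_i$ is $-1$ — hence unimodular, so $L_1,\dots,L_{n-1}$ is again a $\ZZ$-basis, which is the first assertion. For the relations, substituting $L_j = j\,h - \sum_{r<j}(j-r)S_r$ (with the conventions $L_0=L_n=0$) into $-L_{i-1}+2L_i-L_{i+1}$ makes the coefficient of $h$ and of every $S_r$ with $r\ne i$ vanish, leaving coefficient $1$ on $S_i$; for $i=n-1$ one sets $L_n=0$ and recognizes the outcome $n\,h-\sum_{r=1}^{n-2}(n-r)S_r$ as the class of $S_{n-1}$ found above. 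This gives $S_i=-L_{i-1}+2L_i-L_{i+1}$ for all $i$.

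The only non-formal step is the multiplicity computation, and I expect the main obstacle to be the clean justification that $\mathrm{ord}_{S_r}$ detects only the rank $\le r$ stratum of $X_0$ — that is, that the earlier blow-ups are local isomorphisms at $\eta_r$ and that $Z_r$ is smooth there. Once this is in place the Schur-complement reduction is routine, and the rest is linear algebra over $\ZZ$.
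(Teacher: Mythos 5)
The paper itself does not prove this proposition; it is attributed to Schubert with a modern reference to Massarenti's Theorem 3.13. Your argument reconstructs what is essentially that modern proof: describe $\Pic(\CQ(V))$ via the blow-up tower \eqref{eq:blowupCQ}, compute $L_i=\pi_i^*[H]$ as $i\,h-\sum_{r<i}\operatorname{ord}_{S_r}(\Lambda_i)\,S_r$ by finding the order of vanishing of the $i\times i$ minors along the rank strata, and finish with linear algebra. The heart of your computation is correct: the earlier centers lie in the rank $\le r-1$ locus and so miss the generic point $\eta_r$ of the rank $\le r$ locus, the rank $\le r$ locus is smooth at $\eta_r$ (your Schur-complement coordinates show this), hence $\operatorname{ord}_{S_r}$ of a function from $X_0$ is its order along $Z_r$, and the block computation correctly yields $\operatorname{ord}_{S_r}(\Lambda_i)=i-r$ and $\operatorname{ord}_{S_r}(\det)=n-r$. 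The identification of $S_{n-1}$ with the strict transform of $\{\det=0\}$ via \eqref{eq:Sr}, the unimodular triangular change of basis, and the verification that $-L_{i-1}+2L_i-L_{i+1}$ collapses to $S_i$ (including the boundary case $i=n-1$, where $-L_{n-2}+2L_{n-1}=n\,h-\sum_{r=1}^{n-2}(n-r)S_r$) all check out.

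One step is justified too weakly, though it is repairable. You write that each $Z_r$ is ``smooth at its generic point, so each blow-up increases the Picard rank by one.'' Generic smoothness of the center does not by itself give this: blowing up a center that is singular away from its generic point can produce a singular variety whose divisor theory is not simply $\Pic(X)\oplus\ZZ E$, and your subsequent appeal to $\{h,S_1,\dots,S_{n-2}\}$ being a $\ZZ$-basis needs the tower to stay smooth. The clean fix is either to invoke the classical (and nontrivial) fact that the strict transforms of the rank loci are smooth at every stage of \eqref{eq:blowupCQ} — this is due to Semple, Tyrrell and Vainsencher, and is also what makes \Cref{def:CQ} consistent with the blow-up description — or to bypass the induction entirely: since $\CQ(V)$ is smooth by the definition of a complete variety, and $p:\CQ(V)\to\PP(S^2V)$ is a birational morphism of smooth projective varieties whose exceptional locus is the union of the prime divisors $S_1,\dots,S_{n-2}$ lying over centers of codimension $\binom{n-r+1}{2}\ge 3$, the standard argument (push forward a relation $\sum a_rS_r+p^*D\sim 0$ to kill $D$, then note a rational function with divisor supported in codimension $\ge 2$ downstairs is constant) gives $\Pic(\CQ(V))=\ZZ h\oplus\bigoplus_{r=1}^{n-2}\ZZ S_r$ directly. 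With that substitution your proof is complete.
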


As an immediate consequence, we find that for every subset $X$ of $\{1, \ldots, n-1\}$, the set $\{S_i \mid i \in X\} \sqcup \{L_i \mid i \notin X\}$ forms a $\QQ$-basis of $A^1(\CQ_n)$.

 The Chow rings of other complete varieties presented in the previous section, the complete skew-forms and the complete collineations, behave similarly to the Chow ring of the complete quadrics, and for this reason we will not treat them separately. The reader can consult \cite{Massarenti1, Massarenti2}.

\subsection{A general algorithm for computing intersection products} De Concini and Procesi \cite{DeConciniProcesi1} introduced an algorithm to compute arbitrary intersection products of the form
\begin{equation} \label{eq:CQgeneralProduct}
\int_{\CQ_n}{S_1^{a_1}\cdots S_{n-1}^{a_{n-1}} \cdot L_1^{b_1} \cdots L_{n-1}^{b_{n-1}}}.
\end{equation}
Here we assume that $\sum_{i=1}^{n-1} a_i+b_i= \dim (\CQ_n)=\binom {n+1}{2}-1$.
The idea is as follows:
\begin{enumerate}
	\item We can reduce to the case $a_1=\ldots=a_{n-1}=1$.
	\item The variety $S_1 \cap \cdots \cap S_{n-1}$ is isomorphic to the complete flag variety $Fl_n$ of complete flags is $\CC^n$. Hence our intersection product can be rewritten as
\begin{equation}\label{algorithm}
	\int_{Fl_n}{L_1^{b_1} \cdots L_{n-1}^{b_{n-1}}}.
\end{equation}
	\item The Chow ring of $Fl_n$ is known explicitly.
\end{enumerate}

The first and third item above deserve some elaboration:

\addtocontents{toc}{\SkipTocEntry}
\subsection*{Reduction to the flag variety}

Our reduction relies on the following lemma:
\begin{lemma}
	Suppose that in the expression (\ref{eq:CQgeneralProduct}), we have that every $a_i$ is either $0$ or $1$  with at least one being $0$, and moreover for each $a_i=0$ we also have $b_i=0$. Then the intersection product is equal to $0$.
\end{lemma}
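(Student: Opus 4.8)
The plan is to pass from the ambient space $\CQ_n$ to the boundary stratum cut out by the $S_i$ that actually appear, and then to conclude by a dimension count on a flag variety. Write $X=\{i : a_i=1\}=\{i_1<\cdots<i_k\}$ and put $i_0:=0$, $i_{k+1}:=n$. By hypothesis $b_i=0$ for $i\notin X$ and $X\subsetneq\{1,\dots,n-1\}$, so the product (\ref{eq:CQgeneralProduct}) equals $\int_{\CQ_n}\prod_{i\in X}S_i\cdot\prod_{i\in X}L_i^{b_i}$, and the degree constraint $\sum_i a_i+b_i=\dim\CQ_n$ becomes $\sum_{i\in X}b_i=\dim\CQ_n-k$.

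First I would record that $\CQ_n$ is smooth and that the divisors $S_1,\dots,S_{n-1}$ form a simple normal crossings divisor (they are the successive exceptional loci of the iterated blow-up (\ref{eq:blowupCQ}); see \cite{DeConciniProcesi1}). Hence $Z_X:=\bigcap_{i\in X}S_i$ is smooth of pure codimension $k$ and $\prod_{i\in X}[S_i]=[Z_X]$. By the projection formula the problem reduces to showing $\int_{Z_X}\prod_{i\in X}(L_i|_{Z_X})^{b_i}=0$, where the integrand is now a class of degree $\sum_{i\in X}b_i=\dim Z_X$, i.e.\ a top class on $Z_X$.

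Next I would use the structure of the stratum $Z_X$. Generalizing the description of a general point of $S_r$ recalled in (\ref{eq:Sr}) and \Cref{rmk:degenCQ}, the stratum $Z_X$ is a fiber bundle $\rho:Z_X\to Fl_X$ over the partial flag variety $Fl_X=Fl(i_1,\dots,i_k;V)$, the fiber over a flag $V_{i_1}\subset\cdots\subset V_{i_k}$ being the product $\prod_{l=1}^{k+1}\CQ(V_{i_l}/V_{i_{l-1}})$ of complete-quadric spaces on the graded pieces. The key observation is that each class $L_{i_j}|_{Z_X}$ is pulled back from the base $Fl_X$: on $S_{i_j}$ the component $A_{i_j}$ has rank $1$, so $\pi_{i_j}$ lands in the Veronese of $\PP(\bigwedge^{i_j}V)$ and in fact factors through the Pl\"ucker point $[V_{i_j}]\in G(i_j,V)$; therefore $L_{i_j}|_{Z_X}$ is a scalar multiple of $\rho^*(h_{i_j})$, where $h_{i_j}$ is the pullback to $Fl_X$ of the hyperplane class of $G(i_j,V)$. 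Consequently $\prod_{i\in X}(L_i|_{Z_X})^{b_i}=\rho^*(\gamma)$ for some $\gamma\in A^{\sum_{i\in X}b_i}(Fl_X)$.

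The conclusion is then a dimension count, and this is exactly where the hypothesis that some $a_i=0$ is used. Since $X\neq\{1,\dots,n-1\}$, at least one gap satisfies $i_l-i_{l-1}\ge 2$, so the corresponding factor $\CQ(V_{i_l}/V_{i_{l-1}})$ has positive dimension; hence the fiber of $\rho$ is positive-dimensional and $\dim Fl_X<\dim Z_X=\sum_{i\in X}b_i$. Thus $A^{\sum_{i\in X}b_i}(Fl_X)=0$, forcing $\gamma=0$ and $\int_{Z_X}\rho^*\gamma=0$. (If $Z_X=\emptyset$ the statement is trivial.) I expect the only real work to lie in making the fiber-bundle description of $Z_X$ and the factoring of $\pi_{i_j}$ through the Grassmannian precise; these are standard structural features of the space of complete quadrics, but they carry the whole argument, after which the dimension count is immediate.
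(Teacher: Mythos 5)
Your proof is correct and takes essentially the same route as the paper's: the paper likewise pushes the product into $S_{i_1}\cap\cdots\cap S_{i_k}$, uses the map to the partial flag variety $Fl(i_1,\ldots,i_k;n)$ together with the fact that the surviving classes $L_i$ are pullbacks from it, and concludes for dimension reasons, citing De Concini--Procesi for the structural facts. Your write-up simply makes explicit what the paper leaves implicit --- the normal crossings of the $S_i$, the fibration of $Z_X$ over the partial flag variety, the rank-one/Veronese factoring of $\pi_{i_j}$ through the Grassmannian, and the check that a proper $X$ forces $\dim Fl_X<\dim Z_X$.
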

\begin{proof}
	 The idea is to use the map from $S_{i_1} \cap \cdots \cap S_{i_k} \to Fl(i_1, \ldots, i_k;n)$ and the fact that the occurring classes $L_i$ are pullbacks of classes on $Fl(i_1, \ldots, i_k;n)$. The corresponding product on $Fl(i_1, \ldots, i_k;n)$ is zero for dimension reasons. See also \cite[Section 9.2]{DeConciniProcesi1}.
\end{proof}
From now on, we will (without loss of generality) only consider expressions (\ref{eq:CQgeneralProduct}) with all $a_i$ either $0$ or $1$.
Assume we have given such an expression, we may assume that not all $a_i$ are equal to $1$ (else we are done) and that there is an $i$ with $a_i=0$ and $b_i \neq 0$ (else the product is $0$ by the lemma). Pick such an $i$, and rewrite one factor $L_i$ in terms of the basis $\{S_j \mid a_i=0\} \sqcup \{L_j \mid a_j=1\}$.
If we partially order all tuples $T=(a_1, \ldots, a_{n-1}, b_1, \ldots, b_{n-1})$ by $T \leq T'$ if and only if either $\sum{a_i} > \sum{a_i'}$, or $\sum{a_i}=\sum{a_i'}$ and $\sum_{i: a_i=0}{b_i} \leq \sum_{i: a_i'=0}{b_i'}$, then this expands our given intersection product into expressions that are smaller with respect to our partial order.

\addtocontents{toc}{\SkipTocEntry}
\subsection*{Intersection products on flag varieties}

We here present only the bare minimum necessary for our purposes, for more see \cite{manivel2001symmetric}, or the survey article \cite{Gillespie}.

The Chow ring of $Fl_n$ has a basis indexed by permutations of $\{1,\ldots,n\}$. Fixing a complete flag $0=F_0 \subset \cdots \subset F_n = \CC^n$, for such a permutation $w$, we define the \emph{Schubert class} $\sigma_w$ as the class of the \emph{closed Schubert cell}
\[
Y_w :=\overline{\{V_{\bullet} \mid \dim(V_p \cap F_q) = \#\{i \leq p \mid n+1-w(i) \leq q\} \quad \forall p,q \}}.
\]
One checks that the codimension of $Y_w$ is equal to the inversion number $inv(w)$ of $w$ (the number of pairs $i < j$ with $w(i) > w(j)$, or equivalently the minimal number of elementary transpositions $s_i=(i, i+1)$ needed to express $w$). So $\sigma_w \in A^{inv(w)}(Fl_n)$.
In the case $w = s_i$ is an elementary transposition, our Schubert cell becomes
\[
Y_{s_i} = \{V_{\bullet} \mid \dim(V_i \cap F_{n-i}) \geq 1 \}.
\]
So identifying $S_1 \cap \cdots \cap S_{n-1}$ with $Fl_n$, our class $L_i$ corresponds to the Schubert class $\sigma_{s_{n-i}}$. Products of such classes in $A^\bullet(Fl_n)$ can be computed using \emph{Monk's rule}:
\begin{theorem}
	\[
	\sigma_{s_i}\cdot\sigma_w= \sum{\sigma_v},
	\]
\end{theorem}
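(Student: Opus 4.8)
The plan is to prove Monk's rule by computing the Schubert structure constants $c^{s_i,w}_v$ in the expansion $\sigma_{s_i}\cdot\sigma_w=\sum_v c^{s_i,w}_v\,\sigma_v$ directly as intersection numbers, and then identifying geometrically which permutations $v$ occur. Since $\sigma_{s_i}$ is a divisor class ($inv(s_i)=1$), degree reasons force the sum to range only over $v$ with $inv(v)=inv(w)+1$. Using the Poincar\'e pairing on $Fl_n$, under which $\int_{Fl_n}\sigma_u\cdot\sigma_{w_0 u'}=\delta_{u,u'}$ (so that $\{\sigma_u\}$ and $\{\sigma_{w_0 u}\}$ are dual bases), each coefficient is the triple intersection number $c^{s_i,w}_v=\int_{Fl_n}\sigma_{s_i}\cdot\sigma_w\cdot\sigma_{w_0 v}$. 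The total codimension is $1+inv(w)+inv(w_0 v)=\binom{n}{2}=\dim Fl_n$, so this is a genuine point count.

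Next I would make the count transverse. Choosing the three Schubert varieties $Y_{s_i}$, $Y_w$, $Y_{w_0 v}$ with respect to three generic complete flags and invoking Kleiman's transversality theorem (exactly as in the proof of \Cref{prop:muGivesNumberOfQuadrics}, since $GL_n$ acts transitively on $Fl_n$), the intersection becomes a reduced finite set of flags $V_\bullet$, so $c^{s_i,w}_v$ is literally the number of such flags. In particular every occurring coefficient is a nonnegative integer, and the task reduces to showing that it equals $1$ precisely when $v=w\,t_{ab}$ for a transposition $t_{ab}$ with $a\le i<b$ and $inv(wt_{ab})=inv(w)+1$, and $0$ otherwise.

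To pin this down I would exploit that $\sigma_{s_i}$ is the first Chern class of a fundamental-weight line bundle: writing $x_1,\dots,x_n$ for the Chern roots of the tautological quotient bundles on $Fl_n$, one has (up to the standard sign conventions) $\sigma_{s_i}=c_1\big(\det(V_i)^\vee\big)=x_1+\cdots+x_i$, the divisor class attached to the $i$-th fundamental weight $\omega_i$. Multiplication by such a class is governed by the Chevalley formula, which expands $c_1(L_{\omega_i})\cdot\sigma_w$ as $\sum_{\alpha>0}\langle\omega_i,\alpha^\vee\rangle\,\sigma_{w s_\alpha}$ over positive roots $\alpha=e_a-e_b$ with $inv(w s_\alpha)=inv(w)+1$. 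A direct weight computation gives $\langle\omega_i,(e_a-e_b)^\vee\rangle=1$ if $a\le i<b$ and $0$ otherwise, the reflection $s_\alpha$ being the transposition $t_{ab}$ and $w s_\alpha=w t_{ab}$; this is exactly the indexing set in Monk's rule.

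The hard part will be the Chevalley formula itself, i.e.\ establishing that each contributing Schubert class appears with multiplicity exactly one. I would prove this within the transverse point count above: the locus $Y_w(F_\bullet)\cap Y_{w_0 v}(G_\bullet)$ is one-dimensional and is (the closure of) a single rational Schubert curve joining the relevant torus-fixed points, and its degree against the divisor $\sigma_{s_i}=c_1(L_{\omega_i})$ is precisely the weight pairing $\langle\omega_i,\alpha^\vee\rangle\in\{0,1\}$. The subtlety is to verify that this curve is reduced and meets the generic translate $Y_{s_i}(E_\bullet)$ transversally in the predicted number of points, and that no permutation $v$ outside the family $\{wt_{ab}\}$ contributes; this is where one must combine the precise incidence conditions defining the Schubert cells with the genericity of the three flags. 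An alternative, purely algebraic route that avoids the curve analysis is to transport everything to Schubert polynomials via $A^\bullet(Fl_n)\cong\ZZ[x_1,\dots,x_n]/I$, use $\mathfrak{S}_{s_i}=x_1+\cdots+x_i$, and apply the Leibniz rule for the divided difference operators $\partial_j$ to $(x_1+\cdots+x_i)\,\mathfrak{S}_w$; there the main obstacle becomes bookkeeping, namely controlling which terms survive the recursion $\partial_j\mathfrak{S}_w=\mathfrak{S}_{ws_j}$.
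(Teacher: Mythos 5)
The paper offers no proof of this theorem: Monk's rule is quoted as a classical result, with the reader referred to \cite{manivel2001symmetric} and the survey \cite{Gillespie}. So there is no internal argument to compare yours against, and your proposal must be judged on its own terms.

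Your reductions are correct and are exactly the standard geometric route: the degree count forcing $inv(v)=inv(w)+1$; the duality $\int_{Fl_n}\sigma_u\cdot\sigma_{w_0u'}=\delta_{u,u'}$, giving $c^{s_i,w}_v=\int_{Fl_n}\sigma_{s_i}\cdot\sigma_w\cdot\sigma_{w_0v}$; Kleiman transversality (legitimately invoked, as in the proof of \Cref{prop:muGivesNumberOfQuadrics}) turning this into a reduced point count; and the weight computation $\langle\omega_i,(e_a-e_b)^\vee\rangle=1$ iff $a\le i<b$, which matches the paper's indexing set, since the condition that no $w(k)$ with $p<k<q$ lies between $w(p)$ and $w(q)$ is precisely $inv(wt_{pq})=inv(w)+1$. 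The genuine gap is that the proof stops exactly where the theorem begins: that each coefficient equals $\langle\omega_i,\alpha^\vee\rangle\in\{0,1\}$ --- the Chevalley formula --- carries the entire content of Monk's rule, and you defer it in both routes you offer. The curve argument needs three facts you assert but do not establish: (i) $Y_w(F_\bullet)\cap Y_{w_0v}(G_\bullet)$ is nonempty iff $w\le v$ in Bruhat order, and the covers of $w$ are exactly the $wt_{ab}$ with $inv(wt_{ab})=inv(w)+1$, which gives the vanishing for all other $v$; (ii) for a cover, this Richardson variety is a reduced irreducible rational curve, which after moving the generic flag pair into standard/opposite position by the $GL_n$-action becomes the $T$-stable curve through the fixed points $e_w$ and $e_v$ associated to the reflection $t_{ab}$; (iii) the degree of that curve against $c_1(L_{\omega_i})$, computed on the corresponding $SL_2$-orbit, is $\langle\omega_i,\alpha^\vee\rangle$. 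None of these steps would fail --- they are how the cited references prove Chevalley's formula --- but as written they are promissory notes. The same holds for your algebraic alternative: applying the twisted Leibniz rule $\partial_j(fg)=(\partial_jf)g+(s_jf)(\partial_jg)$ to $(x_1+\cdots+x_i)\mathfrak{S}_w$, together with $\partial_j\mathfrak{S}_w=0$ when $inv(ws_j)>inv(w)$, is precisely where the multiplicity-one statement is actually proved, and you leave it as bookkeeping. (A minor additional point: your identification $\sigma_{s_i}=x_1+\cdots+x_i$ mixes the dual of the tautological subbundle with Chern roots of quotient bundles; the hedge ``up to sign conventions'' should be replaced by one fixed convention, especially since the paper's own convention, with $n+1-w(i)$ in the definition of $Y_w$ and $L_i$ corresponding to $\sigma_{s_{n-i}}$, already involves a dualization.) In short: right approach, correct reductions, but the decisive step is missing from both proposed routes; executing either one would complete the proof.
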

where the sum is over all permutations $v$ obtained from $w$ by:
\begin{itemize}
	\item Choosing a pair $p,q$ of indices with $p \leq i < q$ for which $w(p)<w(q)$ and for any $k$ between $p$ and $q$, $w(k)$ is not between $w(p)$ and $w(q)$.
	\item Defining $v(p)=w(q)$, $v(q)=w(p)$, and for all other $k$, $v(k)=w(k)$.
\end{itemize}
By repeatedly applying Monk's Rule, we can compute our product (\ref{algorithm}), finishing the algorithm.

\subsection{Polynomiality results}\label{sec:polynomiality}
In this section we will outline a proof of the following polynomiality result, which was conjectured by Sturmfels and Uhler in \cite{sturmfels2010multivariate}.
\begin{theorem}[{\cite[Theorem 4.2]{MMMSV}}] \label{thm:polyPhi}
	For fixed $d$, $\phi(n,d)$ is a polynomial in $n$, of degree $d-1$.
\end{theorem}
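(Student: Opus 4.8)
Write $N=\binom{n+1}{2}=\dim\CQ_n+1$, so that by the definition of $\mu_{d-1}(S^2V)$ together with (\ref{eq:muChow}) we have
\[
\phi(n,d)=\int_{\CQ_n}L_1^{\,N-d}\,L_{n-1}^{\,d-1},
\]
where $L_1=\pi_1^*H$ and $L_{n-1}=\pi_{n-1}^*H$ are the pullbacks of the hyperplane classes under the two projections (so $L_{n-1}$ is the ``dual'' hyperplane class $H_2$). The plan is to expand the factor $L_{n-1}^{\,d-1}$ into degeneracy divisors, push everything forward to $\PP(S^2V)$ along $\pi_1$, and observe that for fixed $d$ only finitely many terms survive (independently of $n$), each contributing a polynomial in $n$.

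First I would invert \Cref{prop:relations}. Summing the second-difference relations $S_j=-L_{j-1}+2L_j-L_{j+1}$ with $L_0=0$ yields the closed form $L_k=kL_1-\sum_{l=1}^{k-1}(k-l)S_l$, and in particular
\[
L_{n-1}=(n-1)L_1-\sum_{l=1}^{n-2}(n-1-l)\,S_l .
\]
Raising this to the $(d-1)$st power and multiplying by $L_1^{\,N-d}$ writes $\phi(n,d)$ as a sum over monomials $L_1^{a_0}S_{i_1}\cdots S_{i_k}$ (with $a_0+k=d-1$) of intersection numbers $\int_{\CQ_n}L_1^{\,N-1-k}S_{i_1}\cdots S_{i_k}$, each weighted by the coefficient $(n-1)^{a_0}\prod_j\bigl(-(n-1-i_j)\bigr)$, which is a polynomial in $n$. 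Because $d$ is fixed, every such monomial uses at most $d-1$ degeneracy divisors.

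Next I would control each building block by pushing forward along $\pi_1\colon\CQ_n\to\PP(S^2V)$, under which $S_r=\overline{\{\rk A_1=r\}}$ and $L_1=\pi_1^*H$. The image $\pi_1(S_{i_1}\cap\cdots\cap S_{i_k})$ is the symmetric determinantal locus $R_{i_1}=\overline{\{\rk A_1\le i_1\}}$, of projective dimension $N-1-\binom{n-i_1+1}{2}$, so by the projection formula the block \emph{vanishes} unless $\binom{n-i_1+1}{2}\le k\le d-1$. For large $n$ this forces $i_1\ge n-O(\sqrt d)$: all indices concentrate in a bounded window below the top rank (note that $S_{n-1}$ does not even occur, so survivors first appear at $k=3$, $i_1=n-2$). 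On the algorithmic side this is exactly the content of the reduction lemma, which annihilates the blocks supported at small indices; the surviving blocks are then evaluated by the De Concini--Procesi reduction to Schubert calculus on the fixed-size corank flag variety, producing an $n$-independent multiplicity $m_{\mathbf i}$. In the generic case one obtains $\int_{\CQ_n}L_1^{\,N-1-k}S_{i_1}\cdots S_{i_k}=m_{\mathbf i}\cdot\deg R_{i_1}$, and the degree of a symmetric determinantal variety of bounded corank is a classical quantity that is polynomial in $n$, of degree equal to its codimension $\binom{n-i_1+1}{2}=k$. Since the number of admissible index configurations is bounded in terms of $d$ alone, $\phi(n,d)$ is a finite sum of products of polynomials in $n$, hence a polynomial.

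For the degree, the pure term $a_0=d-1$ contributes $(n-1)^{d-1}\int_{\CQ_n}L_1^{\,N-1}=(n-1)^{d-1}$, since $\pi_1$ is birational. The hard part is the combined bookkeeping: each correcting term (those with $k\ge1$) carries a coefficient of degree $a_0=d-1-k$ multiplied by an intersection number of degree $k$ in $n$, so \emph{every} surviving term lands in degree $d-1$. One must therefore combine, uniformly in $n$, the Giambelli--Harris--Tu degree formula for $\deg R_{i_1}$ with the flag-variety multiplicities $m_{\mathbf i}$ coming from Monk's rule, and check that the weighted sum of these degree-$(d-1)$ contributions neither exceeds degree $d-1$ (in the boundary configurations where $\binom{n-i_1+1}{2}<k$ and $\pi_1$ fails to be generically finite) nor cancels the $(n-1)^{d-1}$ from the pure term. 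This interplay between the localisation of the indices near the top rank and the growth of the determinantal degrees is the heart of the argument and the step I expect to be the most delicate.
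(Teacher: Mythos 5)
Your first half is sound and runs parallel to the paper's own reduction. The inversion $L_k=kL_1-\sum_{l=1}^{k-1}(k-l)S_l$ of \Cref{prop:relations} is correct (and your $L_{n-1}$ is the right dual hyperplane class; the paper's displayed $L_2$ in the definition of $\phi(n,d)$ is a slip), and your support-and-dimension argument --- $\pi_{1*}$ of a class of dimension $\binom{n+1}{2}-1-k$ supported over $R_{i_1}$ must vanish when $\binom{n-i_1+1}{2}>k$ --- is a legitimate substitute for the Pataki inequalities, which the paper uses for exactly this localisation after expanding a \emph{single} factor of $L_1$ to get $\phi(n,d)=\frac1n\sum_{1\leq\binom{s+1}{2}\leq d}s\,\delta(d,n,n-s)$. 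Your observation that $S_{n-1}$ never occurs, so that survivors first appear at $k=3$ with corank $2$, is a good consistency check and shows the localisation near top rank is working as intended.

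The genuine gap is the evaluation of the surviving blocks, which is where the theorem actually lives. Your identity $\int_{\CQ_n}L_1^{\binom{n+1}{2}-1-k}S_{i_1}\cdots S_{i_k}=m_{\mathbf i}\cdot\deg R_{i_1}$ is only even meaningful in the extremal case $\binom{n-i_1+1}{2}=k$, where the pushforward is forced to be a multiple of $[R_{i_1}]$; for $\binom{n-i_1+1}{2}<k$ (e.g.\ the block $\int L_1^{\binom{n+1}{2}-1-k}S_{n-2}^{\,k}$ with $k>3$) the pushforward is an arbitrary class of the right dimension on $R_{i_1}$, and nothing you have said controls its degree as a function of $n$, let alone proves it is polynomial of degree $k$. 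The mechanism you invoke for $m_{\mathbf i}$ --- ``De Concini--Procesi reduction to Schubert calculus on the fixed-size corank flag variety'' --- does not exist as stated: the De Concini--Procesi algorithm reduces to $Fl_n$, whose dimension grows with $n$, its reduction lemma concerns squarefree products $S_{i_1}\cdots S_{i_k}$ with distinct indices (your expansion produces powers $S_l^{a}$), and Monk's rule on $Fl_n$ gives no visibly $n$-independent constant. In the paper, precisely this missing step is \Cref{thm:polyDelta}: each block is pushed forward to the Grassmannian $G(r,n)$, rewritten as $\int_{G(r,n)}s_{(\cdot)}(S^2\fU)\,s_{(\cdot)}(S^2\fQ^*)$, expanded via the Bothmer--Ranestad formula into Lascoux coefficients, and the polynomiality of those coefficients (with the degree bound) is proved combinatorially. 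Your proposal has no counterpart to this ingredient, and you yourself defer the degree bookkeeping --- including non-cancellation of the leading terms against $(n-1)^{d-1}$ --- as ``the most delicate step'' without carrying it out. As written, neither the polynomiality of $\phi(\cdot,d)$ nor the assertion that its degree is $d-1$ is established.
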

Although the algorithm from the previous section can in principle be used to compute each number $\phi(n,d)$, it is not clear how to obtain polynomiality using this approach. We will use a different strategy instead: in the previous section we pushed forward our intersection product to $\bigcap_{i}{S_i} \cong Fl_n$, here we will push it forward to the individual $S_r$, and then further to the Grassmannians $G(r,n)$.
\begin{definition}
	Let $0 < m < \binom{n+1}{2}$ and $0 < r < n$, then we define
	$$
	\delta(m,n,r) = \int_{CQ_n}S_rL_1^{\binom{n+1}{2}-m-1}L_{n-1}^{m-1}.
	$$
\end{definition}
\begin{remark}
	The number $\delta(m,n,r)$ is known as the \emph{algebraic degree of semidefinite programming} \cite{NRS,BothmerRanestad}. It is equal to the degree of the variety dual to the locus of rank $\leq r$ matrices in a general $m$-dimensional subspace of $\PP(S^2 \CC^n)$. The sum $\sum_{r}{\delta(m,n,r)}$ measures the algebraic complexity of the semidefinite programming problem
	\begin{equation}
		\text{maximize} \tr(B \cdot Y) \text{ subject to } Y \in L \text{ and } Y \text { is semipositive definite,}
	\end{equation}
	where $B \in S^2(\RR^n)$ is a generic $n \times n$-matrix, and $L \subseteq S^2(\RR^n)$ is a generic linear subspace of dimension $m$.
\end{remark}
\begin{prop}[{Pataki's inequalities, see \cite[Proposition 5]{NRS}}]
	$\delta(m,n,r) \neq 0$ if and only if
	\[
	\binom{n-r+1}{2} \leq m \leq \binom{n+1}{2} - \binom{r+1}{2}.
	\]
\end{prop}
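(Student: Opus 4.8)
The plan is to push the whole product onto the divisor $S_r$ and then recognise the resulting number as a \emph{bidegree of the conormal variety} of a symmetric determinantal locus, where a general nonvanishing principle for multidegrees settles the matter at once.

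\emph{Reduction to $S_r$.} Writing $N := \binom{n+1}{2}-1 = \dim CQ_n$, we have $\dim S_r = N-1$ and $\binom{n+1}{2}-m-1 = N-m$, so by the projection formula
\[
\delta(m,n,r) = \int_{CQ_n} S_r \cdot L_1^{N-m} L_{n-1}^{m-1} = \int_{S_r} (L_1|_{S_r})^{N-m}(L_{n-1}|_{S_r})^{m-1}.
\]
Since $L_1 = \pi_1^*[H]$ and $L_{n-1} = \pi_{n-1}^*[H]$, both restricted classes are pullbacks of hyperplane classes under $\psi := (\pi_1,\pi_{n-1})|_{S_r}\colon S_r \to \PP(S^2V)\times\PP(S^2V^*) = \PP^N\times(\PP^N)^*$.

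\emph{Identifying the image.} Next I would identify $Y := \psi(S_r)$. From the description of a general point of $S_r$ recorded when the degeneration spaces were introduced, on $S_r$ one has $\rk A_1 = r$, $\rk A_{n-1} = n-r$, with the image of $A_1$ equal to the kernel of $A_{n-1}$, i.e. $A_1\cdot A_{n-1}=0$; these are precisely the general conormal pairs of the symmetric determinantal variety $\mathcal{D}_r := \{[A]\in\PP^N : \rk A \le r\}$. Thus $Y$ is the conormal variety of $\mathcal{D}_r$, irreducible of dimension $N-1$, and $\psi$ is generically finite onto $Y$ (in fact birational, since the whole complete quadric is generically reconstructed from $(A_1,A_{n-1})$ together with $U=\im A_1$). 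Hence $\delta(m,n,r)=\deg(\psi)\cdot\mu_{m-1}(Y)$ with $\mu_i(Y):=\int_Y h_1^{(N-1)-i}h_2^{i}$ and $\deg(\psi)\ge 1$, so $\delta(m,n,r)\neq 0$ if and only if $\mu_{m-1}(Y)\neq 0$.

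\emph{The multidegree principle and conclusion.} I would then invoke the following standard fact: for an irreducible $Y\subseteq\PP^a\times\PP^b$ of dimension $e$ with projection images of dimensions $d_1=\dim\pi_1(Y)$, $d_2=\dim\pi_2(Y)$, one has $\mu_i(Y)\neq 0$ if and only if $e-d_1\le i\le d_2$. (Outside this range $\mu_i=0$ because $h_1^{e-i}$ vanishes on $\pi_1(Y)$ once $e-i>d_1$ and $h_2^{i}$ vanishes on $\pi_2(Y)$ once $i>d_2$; inside the range one cuts $\pi_2(Y)$ by $i\le d_2$ and $\pi_1(Y)$ by $e-i\le d_1$ general hyperplanes, and genericity plus irreducibility of $Y$ keep the intersection nonempty of expected dimension, giving a positive count.) Here $\pi_1(Y)=\mathcal{D}_r$ has dimension $N-\binom{n-r+1}{2}$ (the symmetric rank $\le r$ locus has codimension $\binom{n-r+1}{2}$), and $\pi_2(Y)=\mathcal{D}_r^{\vee}=\mathcal{D}_{n-r}$ by projective duality of symmetric determinantal loci, of dimension $N-\binom{r+1}{2}$. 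With $e=N-1$ and $i=m-1$, the condition $e-d_1\le i\le d_2$ reads
\[
\binom{n-r+1}{2}-1 \;\le\; m-1 \;\le\; N-\binom{r+1}{2},
\]
that is $\binom{n-r+1}{2}\le m\le \binom{n+1}{2}-\binom{r+1}{2}$, as claimed.

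\emph{Main obstacle.} The delicate step is the identification of $\psi(S_r)$ with the conormal variety of $\mathcal{D}_r$ and the generic finiteness of $\psi$: this rests on the fibration of $S_r$ over $G(r,n)$ with fibre $CQ_r\times CQ_{n-r}$ and on the classical Lagrangian description of the conormal variety of a symmetric determinantal locus as pairs $(A,B)$ of complementary ranks with $AB=0$. Everything after that is a dimension count. As a cross-check, and to shorten the argument, one could prove only the lower bound and obtain the upper bound from the duality $\CQ(V)\cong\CQ(V^*)$, which exchanges $L_1\leftrightarrow L_{n-1}$ and $S_r\leftrightarrow S_{n-r}$, hence sends $(r,m)$ to $(n-r,\binom{n+1}{2}-m)$ and swaps the two inequalities.
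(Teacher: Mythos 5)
Your proof is correct in substance, but note first that the paper does not actually prove this proposition: it is quoted directly from \cite[Proposition 5]{NRS}, so there is no internal proof to compare against. What you have reconstructed is essentially the argument behind the cited result: pushing the product onto $S_r$ identifies $\delta(m,n,r)$, up to the degree of $\psi=(\pi_1,\pi_{n-1})|_{S_r}$, with the $(m-1)$-st bidegree (polar degree) of the conormal variety of the symmetric determinantal locus $\mathcal{D}_r$, whose two projections are $\mathcal{D}_r$ (codimension $\binom{n-r+1}{2}$) and its projective dual $\mathcal{D}_{n-r}$ (codimension $\binom{r+1}{2}$); the interval of nonvanishing of bidegrees then yields exactly Pataki's range, and your arithmetic translating $e-d_1\le m-1\le d_2$ into the stated inequalities checks out. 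Two steps deserve stronger support than you give them. First, your ``standard fact'' on multidegrees is true for irreducible $Y\subseteq \PP^a\times\PP^b$, but the parenthetical argument for the positive direction is not a proof as written: after cutting with general hyperplanes from one factor, $Y$ need not remain irreducible and its components can have projections of smaller dimension, so you should cite the classical theorem of van der Waerden, or the general positivity criterion for multidegrees of Castillo--Cid-Ruiz--Li--Monta\~no--Zhang, rather than appeal to ``genericity plus irreducibility.'' Second, the identification of $\psi(S_r)$ with the conormal variety together with generic finiteness of $\psi$ --- which you rightly flag as the delicate point --- is in fact supported by the paper's own description of a general point of $S_r$: such a point is determined by a rank-$r$ matrix $A_1$ and a rank-$(n-r)$ matrix $A_{n-1}$ vanishing on $\im A_1$, which is precisely a general conormal pair and shows $\psi$ is generically injective; combined with irreducibility of $S_r$ and the equality of dimensions (both sides have dimension $\binom{n+1}{2}-2$), this forces the image to be the full conormal variety. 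Your closing self-duality cross-check, exchanging $L_i\leftrightarrow L_{n-i}$ and $S_r\leftrightarrow S_{n-r}$ so that $\delta(m,n,r)=\delta\bigl(\binom{n+1}{2}-m,\,n,\,n-r\bigr)$ and the two inequalities swap, is also correct and is a genuine economy: it lets one prove a single bound and deduce the other.
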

Using the relations from \Cref{prop:relations} and the Pataki inequalities, we can write
\begin{equation}
\phi(n,d)=\frac{1}{n}\sum_{1 \leq \binom{s+1}{2} \leq d}{s\delta(d,n,n-s)}.
\end{equation}
So our \Cref{thm:polyPhi} follows immediately once we can prove the following:
\begin{theorem}[{\cite[Theorem 4.1]{MMMSV}}] \label{thm:polyDelta}
	For any fixed $m,s >0$, the function $\delta(m,n,n-s)$ is a polynomial in $n$, of degree $m$. Moreover this polynomial  vanishes at $n=0$. %
\end{theorem}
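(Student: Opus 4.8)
The plan is to evaluate $\delta(m,n,n-s)$ by pushing the defining intersection product down the tower $\CQ_n \supset S_{n-s} \xrightarrow{\,p\,} G(s,n)$ and then recognizing the resulting Grassmannian integral as a coefficient extraction from a fixed rational generating function, which is manifestly polynomial in $n$. First, since $S_{n-s}$ is an effective divisor, the projection formula gives
\[
\delta(m,n,n-s) = \int_{\CQ_n} S_{n-s}\, L_1^{\binom{n+1}{2}-m-1} L_{n-1}^{m-1} = \int_{S_{n-s}} \big(\restr{L_1}{S_{n-s}}\big)^{\binom{n+1}{2}-m-1} \big(\restr{L_{n-1}}{S_{n-s}}\big)^{m-1},
\]
so the entire computation takes place on the single boundary divisor $S_{n-s}$. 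As recalled in \Cref{rmk:degenCQ}, $S_{n-s}$ is the strict transform of the exceptional divisor of the blow-up \eqref{eq:blowupCQ} along the rank-$(n-s)$ locus, and it carries a natural morphism $p$ sending a boundary quadric to $U := \im A_1 \in G(n-s,n) \cong G(s,n)$; the orthogonality isomorphism lets us work on the Grassmannian $G(s,n)$ of $s$-planes, whose tautological subbundle $\mathcal S$ has \emph{fixed} rank $s$ as $n$ grows. The fibre of $p$ is a product $\CQ(U) \times \CQ(V/U) \cong \CQ_{n-s} \times \CQ_s$ of complete-quadric spaces, recording the smooth quadric on the image together with its transverse degeneration.

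The heart of the argument is to rewrite the two restricted classes in coordinates adapted to this fibration. Tracking the hyperplane classes $L_i = \pi_i^*[H]$ through the blow-up \eqref{eq:blowupCQ} and using the relations of \Cref{prop:relations}, one finds that the leading-quadric class $\restr{L_1}{S_{n-s}}$ is supported on the base $G(s,n)$ together with the growing factor $\CQ_{n-s}$, whereas the dual-quadric class $\restr{L_{n-1}}{S_{n-s}}$ is supported on $G(s,n)$ together with the fixed-dimensional factor $\CQ_s$. Integrating first over the fibre, the contribution of $\CQ_s$ is a constant independent of $n$ (a fixed intersection number on a space of bounded dimension), while the large power of $L_1$ is absorbed by integration over $\CQ_{n-s}$. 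What I want to emerge from this is an integral over $G(s,n)$ of a \emph{universal} expression $\Phi$ in the Chern classes $c_1(\mathcal S),\ldots,c_s(\mathcal S)$: universal in that its shape is independent of $n$, its only $n$-dependence entering through the Whitney relation $c(\mathcal S)c(\mathcal Q)=1$, which expresses the Chern classes of the growing quotient $\mathcal Q$ polynomially in those of the fixed-rank bundle $\mathcal S$.

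Finally, I would evaluate $\int_{G(s,n)} \Phi\big(c_1(\mathcal S),\ldots,c_s(\mathcal S)\big)$ by the classical Grassmannian integration (residue) formula, which writes such an integral as the coefficient of a fixed monomial in a Laurent expansion whose sole $n$-dependence is the factor $(x_1\cdots x_s)^{-n}$ in the Chern roots. Since $\Phi$ is a fixed rational/polynomial expression, this coefficient extraction yields a \emph{polynomial} in $n$; a bookkeeping of degrees—the $m-1$ factors of $L_{n-1}$ and the single factor $S_{n-s}$ comprising the $n$-sensitive part—pins the degree at exactly $m$, and evaluating the residue at $n=0$ (where $G(s,0)$ is empty for $s>0$) forces the vanishing there. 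The main obstacle is the \textbf{second step}: establishing the explicit restriction formulas for $\restr{L_1}{S_{n-s}}$ and $\restr{L_{n-1}}{S_{n-s}}$ and carrying out the fibre integral over the \emph{growing} factor $\CQ_{n-s}$ \emph{directly}, so as to produce a genuinely universal integrand on $G(s,n)$. One must resist the temptation to evaluate this fibre integral as a $\phi$-type number on $\CQ_{n-s}$ and invoke its polynomiality, since \Cref{thm:polyDelta} is precisely what is used to prove the polynomiality of $\phi$; the argument must therefore treat $S_{n-s}$ uniformly as an iterated bundle and reduce to a single residue computation on $G(s,n)$, from which polynomiality, the degree $m$, and the vanishing at $n=0$ all follow at once.
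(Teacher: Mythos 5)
Your opening moves coincide with the paper's: the projection formula onto the divisor $S_{n-s}$ and the pushforward to the Grassmannian are exactly the first steps of the paper's sketch, which lands on the formula (due to \cite[Theorem 1.1]{BothmerRanestad}) $\delta(m,n,r)=\int_{G(r,n)}s_{\binom{n+1}{2}-m-\binom{r+1}{2}}(S^2\fU)\,s_{m-\binom{n-r+1}{2}}(S^2\fQ^*)$ with $r=n-s$. The genuine gap is in your ``heart of the argument'': the universal, $n$-independent integrand $\Phi(c_1(\mathcal S),\ldots,c_s(\mathcal S))$ you want cannot exist. After the fibre integration, the class on the Grassmannian must have cohomological degree $\dim G(s,n)=s(n-s)$, which grows with $n$. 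Concretely, pushing $L_1^{\binom{n+1}{2}-m-1}$ down the $\PP(S^2\fU)$-bundle absorbs only $\binom{n-s+1}{2}-1$ of those powers, leaving the Segre class $s_{\frac{s(2n-s+1)}{2}-m}(S^2\fU)$ of the symmetric square of the \emph{growing-rank} bundle on the base; only the $L_{n-1}$-factor yields the fixed-degree class $s_{m-\binom{s+1}{2}}(S^2\fQ^*)$. When you express the growing factor through the fixed rank-$s$ bundle, the $n$-dependence does \emph{not} enter only via the Whitney relation: even using the filtration of the trivial bundle $S^2\CC^n$, one gets $s(S^2\fU)=c(\fU\ot\fQ)\,c(S^2\fQ)$, and the Schur expansion of $c(\fU\ot\fQ)$ has binomial-determinant coefficients depending on $\rk\fU=n-s$. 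These rank-dependent coefficients are precisely the Lascoux coefficients (\cite{LaksovLascouxThorup,Pragacz}), and proving that \emph{they} are polynomial in $n$ is the actual content of the theorem; the paper does this combinatorially in \cite[Theorem 4.3]{MMMSV}. Your residue formula does not bypass this: once the integrand itself carries $n$-dependent coefficients, the claim that coefficient extraction with $n$ appearing only in $(x_1\cdots x_s)^{-n}$ yields a polynomial no longer follows, and repairing it amounts to re-proving the Lascoux-coefficient polynomiality you omitted.

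The two side claims also do not follow from your sketch. The degree-$m$ statement cannot be obtained by ``bookkeeping of degrees'' of the factors $L_{n-1}^{m-1}$ and $S_{n-s}$; in the paper it is read off from the explicit Lascoux-coefficient expansion. For the vanishing at $n=0$: the geometric identity defining $\delta(m,n,n-s)$ only makes sense for $n$ in the geometrically meaningful range, so the value of the interpolating polynomial at $n=0$ is not computed by the (empty) Grassmannian $G(s,0)$ — one must evaluate the explicit algebraic formula at $n=0$, which again goes through the coefficient analysis. In short, you correctly flagged your second step as the obstacle, but it is not a technical fibre-integration issue: the reduction to a fixed $\Phi$ is impossible as stated, and the missing combinatorial lemma about rank-dependent coefficients is the heart of the proof of \Cref{thm:polyDelta}.
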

\begin{proof}[Sketch of proof]
	We have
	$$
	\delta(m,n,r) = \int_{S_r}L_1^{\binom{n+1}{2}-m-1}L_{n-1}^{m-1}.
	$$
	We can pushforward this computation to the Grassmannian $G(r,n)$, and get the following expression in terms of Segre classes:
	\[
	\delta(m,n,r) = \int_{G(r,n)}s_{\left(\binom{n+1}{2}-m-\binom{r+1}{2}\right)}(S^2\fU)s_{\left(m-\binom{n-r+1}{2}\right)}(S^2\fQ^*).
	\]
	Expanding these Segre classes yields a formula for $\delta$ in terms of the so-called \emph{Lascoux coefficients} (\cite[Theorem 1.1]{BothmerRanestad}, see also \cite{LaksovLascouxThorup,Pragacz}). So we are left with showing a certain polynomiality result for these Lascoux coefficients, which can be done combinatorially \cite[Theorem 4.3]{MMMSV}.
\end{proof}

We conclude this section with a slight generalization of the above polynomiality result.
\begin{definition}
	For $\binom{n+1}{2} >d$, we define
	$$
	\phi_c(n,d) = \int_{CQ_n}L_cL_1^{\binom{n+1}{2}-d-1}L_{n-1}^{d-1}.
	$$
\end{definition}
\begin{corollary}
	If $\binom{n-c+2}{2} > d$, then $\phi_c(n,d)=c \cdot \phi(n,d)$. In particular, in this range $\phi_c(n,d)$ is given by a polynomial in $n$.
\end{corollary}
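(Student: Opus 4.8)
The plan is to rewrite the single factor $L_c$ occurring in $\phi_c(n,d)$ as a combination of $L_1$ and the degeneracy classes $S_j$, and then to show that under the hypothesis $\binom{n-c+2}{2}>d$ every $S_j$-contribution vanishes by Pataki's inequalities. Everything takes place in $A^\bullet(CQ_n)$, and the only structural input needed is the relation $S_i=-L_{i-1}+2L_i-L_{i+1}$ (with $L_0=L_n=0$) from \Cref{prop:relations}.

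First I would establish, by induction on $c$, the identity
\begin{equation*}
L_c = c\,L_1 - \sum_{j=1}^{c-1}(c-j)\,S_j \in A^1(CQ_n).
\end{equation*}
The base case $c=1$ is immediate, and for the inductive step one solves the three-term relation $S_{c-1}=-L_{c-2}+2L_{c-1}-L_c$ of \Cref{prop:relations} for $L_c$, namely $L_c=2L_{c-1}-L_{c-2}-S_{c-1}$, and substitutes the formulas for $L_{c-1}$ and $L_{c-2}$; the coefficients telescope to exactly $c\,L_1-\sum_{j=1}^{c-1}(c-j)S_j$. Conceptually this is just the observation that $S_i$ is the negative discrete second difference of the sequence $L_0,\dots,L_n$ with Dirichlet boundary values, so that $L_c$ is recovered from the $S_j$ through the corresponding discrete Green's function.

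Next I would substitute this identity into the definition of $\phi_c$ and expand using linearity of $\int_{CQ_n}$:
\begin{align*}
\phi_c(n,d) &= \int_{CQ_n} L_c\, L_1^{\binom{n+1}{2}-d-1} L_{n-1}^{d-1} \\
&= c\int_{CQ_n} L_1^{\binom{n+1}{2}-d} L_{n-1}^{d-1} - \sum_{j=1}^{c-1}(c-j)\int_{CQ_n} S_j\, L_1^{\binom{n+1}{2}-d-1} L_{n-1}^{d-1} \\
&= c\,\phi(n,d) - \sum_{j=1}^{c-1}(c-j)\,\delta(d,n,j),
\end{align*}
where each correction term is precisely one of the semidefinite-programming degrees $\delta(d,n,j)$. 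The final step is to kill these terms: by Pataki's inequalities, $\delta(d,n,j)\neq 0$ forces $\binom{n-j+1}{2}\leq d$, whereas for every $j\leq c-1$ one has $n-j+1\geq n-c+2$ and hence $\binom{n-j+1}{2}\geq\binom{n-c+2}{2}>d$ by hypothesis. Thus $\delta(d,n,j)=0$ for all $1\leq j\leq c-1$, the entire sum vanishes, and $\phi_c(n,d)=c\,\phi(n,d)$. The polynomiality of $\phi_c(n,d)$ in $n$ throughout this range is then immediate from \Cref{thm:polyPhi}.

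I expect the only delicate point to be the bookkeeping in the induction, in particular using the boundary convention $L_0=L_n=0$ consistently when solving the recurrence; the vanishing argument itself is a clean application of Pataki's inequalities, relying only on the fact that $\binom{n-j+1}{2}$ increases as $j$ decreases.
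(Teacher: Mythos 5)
Your proof is correct, and it takes a genuinely different (and arguably cleaner) route than the paper's, though both rest on \Cref{prop:relations} and Pataki's inequalities. The paper inverts the full tridiagonal system, writing $L_c$ purely in terms of degeneracy classes, $L_c=\sum_{j=1}^{n-1}M_{c,j}S_j$ with $M_{c,j}=\min\{c,j\}-\frac{cj}{n}$ (the complete discrete Green's function you allude to); it then expands $\phi_c(n,d)$ as a sum of $\delta(d,n,j)$'s over \emph{all} $j$, kills the terms with $j\leq c-1$ by Pataki, and finally must recognize the surviving sum as $c\cdot\phi(n,d)$ by invoking the identity $\phi(n,d)=\frac{1}{n}\sum_{1\leq\binom{s+1}{2}\leq d}s\,\delta(d,n,n-s)$ established earlier in the section (which in turn requires checking, again via the hypothesis and Pataki, that the truncation of the second sum at $j=c$ drops no nonzero terms). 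You instead solve the recurrence only ``up to $c$'', obtaining $L_c=cL_1-\sum_{j=1}^{c-1}(c-j)S_j$; your identity and induction are correct, and in fact only use the boundary value $L_0=0$, never $L_n=0$. This makes the term $c\,\phi(n,d)$ fall out directly from the definition, with integer coefficients throughout, and leaves only the finitely many corrections $\delta(d,n,j)$ for $1\leq j\leq c-1$ --- exactly those annihilated by the hypothesis $\binom{n-c+2}{2}>d$ through the lower Pataki bound $\binom{n-j+1}{2}\leq d$. So your argument is more self-contained (it does not need the $\delta$-expansion of $\phi$) and makes the role of the hypothesis transparent, while the paper's version is natural in context because the matrix $M_{c,j}$ and the $\delta$-expansion of $\phi$ were already at hand. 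One small remark supporting your bookkeeping: you correctly worked with $\phi(n,d)=\int_{CQ_n}L_1^{\binom{n+1}{2}-d}L_{n-1}^{d-1}$; the displayed formula in the paper has $L_2^{d-1}$, which is a typo, as forced by $\phi(n,d)=\mu_{d-1}(S^2V)$ and the fact that $H_2$ pulls back to $L_{n-1}$.
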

\begin{proof}
	We can use \Cref{prop:relations} to write the class $L_c$ in terms of $S_i$'s:
	\begin{align*}
	L_c &= \sum_{j=1}^{n-1}{M_{c,j}S_j}, \text{ where } M_{c,j}=\min\{c,j\} - \frac{cj}{n} \\
	&= \sum_{j=1}^{c-1}{\frac{j(n-c)}{n}S_j} + \sum_{j=c}^{n-1}{\frac{c(n-j)}{n}S_j}.
	\end{align*}
	So we find
	\begin{align*}
	\phi_c(n,d) &= \sum_{j=1}^{c-1}{\frac{j(n-c)}{n}\delta(d,n,j)} + \sum_{j=c}^{n-1}{\frac{c(n-j)}{n}\delta(d,n,j)}\\
	&= \sum_{i=n-c+1}^{n-1}{\frac{(n-i)(n-c)}{n}\delta(d,n,n-i)} + \sum_{i=1}^{n-c}{\frac{ci}{n}\delta(d,n,n-i)}.
	\end{align*}
	By our assumption $\binom{n-c+2}{2} > d$ and Pataki's inequalities, every term in the first sum vanishes, and we obtain
	\[
	\phi_c(n,d):=c \cdot \sum_{\binom{i+1}{2} \leq d}{\frac{i}{n}\delta(d,n,n-i)} = c \cdot \phi(n,d).
	\]
\end{proof}

\begin{remark} \label{rmk:manivel2}
	In \cite{STZ}, it was conjectured that the analogue of \Cref{thm:polyPhi} also holds for the ML-degree of the linear \emph{covariance} model (see \Cref{rmk:manivel1}), together with explicit formulas in the cases $d=2,3,4$. This conjecture and formulas have been proven for $d=2$ in \cite{CoonsMariglianoRuddy}, and very recently for $d=3,4$ in \cite{manivel2021proof}. The latter proof also uses intersection theory on the space of complete quadrics, but needs some additional intricate geometrical arguments that do not obviously generalize to higher $d$.
\end{remark}

\subsection{The whole Chow ring}

So far, we have only dealt with classes in $A^\bullet(\CQ_n)$ that can be expressed in terms of $L_i$ and $S_i$, i.e.\ that are in the subring generated by $A^1(\CQ_n)$. If we want to describe the whole Chow ring, one approach is to construct an affine stratification of $\CQ_n$. The classes of the closed strata then form a basis of $A^\bullet(\CQ_n)$. One such stratification is given in \cite{Strickland}. We will now give an alternative description of this stratification, as a Bia{\l}ynicki-Birula decomposition.

If we choose a torus $(\CC^*)^n \cong T \subset S^2 V$, this gives rise to a $T$-action on $\CQ_n$. The $T$-fixed points can be indexed by \emph{$2$-permutations}, see \cite[Proposition 4.7]{mateusz1}.
\begin{definition}
	A \emph{$2$-permutation} of $[n]$ is a map $\sigma: [n] \to [k]$ for some $k \leq n$, such that for every $j \in [k]$, we have $\#\sigma^{-1}(j) \in \{1,2\}$.
\end{definition}

If we fix a general enough $\CC^*\subset T$, we get a decomposition of $\CQ_n$ into cells: two points belong to the same cell if and only if the limits (for $t \to 0$) of their $\CC^*$-orbits are the same. Theorem~4.4 of Bia{\l}ynicki-Birula \cite{BB} states that these cells give an affine stratification of $\CQ_n$. In particular, the classes of the closed strata form a basis of the Chow group. We make a particular choice of $\CC^* \subset T$, namely given by the inclusion $t \mapsto (t^{d_1}, \ldots, t^{d_n})$, where $d_1, \ldots, d_n$ are fixed integers satisfying
$$
2d_1 < d_1+d_2 < 2d_2 < d_1+d_3 < d_2+d_3 < 2d_3 < d_1 + d_4 < \ldots < 2d_n.
$$
 After making this choice, we can describe the dimensions of the cells, using the description of the $T$-action in \cite[Proposition 4.9]{mateusz1}. %
\begin{definition} \label{def:weights}
	The \emph{weight} of a $2$-permutation $\sigma: [n] \to [k]$ is equal to
	\begin{align*}
	w(\sigma) = & \#\{(i,j) \in [n] \times [n] \mid i < j \text{ and } \sigma(i) \leq \sigma(j) \}\\ & + \#\{j \in [k-1] \mid \max \sigma^{-1}(j) < \max \sigma^{-1}(j+1)\}.
	\end{align*}
\end{definition}
Then the dimension of the cell corresponding to a $2$-permutation $\sigma$ is equal to $w(\sigma)$. In particular, the dimension of the Chow group $A_m(\CQ_n)$ is equal to the number of $2$-permutations of weight $m$. This is \cite[Theorem 2.7]{Strickland}.

All of the cells admit an affine parametrization, which we will now describe. Our parametrization is based on the one in \cite{LaksovCQ}.
To a $2$-permutation $\sigma: [n] \to [k]$, we associate a symmetric $n \times n$ matrix $Y_{\sigma}$ whose entries are monomials in $k$ variables $y_1, \ldots, y_k$. The construction is as follows: for every $i \in [k]$, there are 2 possibilities: either $\sigma^{-1}(\{i\}) = \{j\}$, in which case we put $Y_{j,j}=y_1 \cdots y_i$, or $\sigma^{-1}(\{i\}) = \{j_1,j_2\}$, in which case we put $Y_{j_1,j_2}=Y_{j_2,j_1}=y_1 \cdots y_i$.
Furthermore, we let $X$ be the matrix
$$
\begin{pmatrix}
1 & 0 & 0 & \cdots & 0 \\
x_{12} & 1 & 0 & \cdots & 0 \\
x_{13} & x_{23} & 1 & \cdots & 0 \\
\vdots & \vdots & \vdots & \ddots & \vdots \\
x_{1n} & x_{2n} & x_{3n}  & \ldots & 1
\end{pmatrix}.
$$

We then consider the following map, which a priori looks like a rational map, but upon closer inspection turns out to be defined everywhere, i.e.\ is a morphism:
\begin{align*}
\rho_{\sigma}: \CC^{\binom{n}{2}} \times \CC^{k} &\to \CQ_n \\
(X,Y_\sigma) &\mapsto \left(X\cdot Y_{\sigma}\cdot X^T, \bigwedge^2 (X\cdot Y_{\sigma}\cdot X^T), \ldots, \bigwedge^{n-1} (X\cdot Y_{\sigma}\cdot X^T)\right).
\end{align*}
Our parametrization of the cell corresponding to $\sigma$ is a restriction of $\rho_{\sigma}$, obtained by putting some of the variables equal to $0$. More precisely, we put $x_{ij}=0$ if $i<j$ but $\sigma(i) > \sigma(j)$, and we put $y_j=0$ if $\max \sigma^{-1}(j) > \max \sigma^{-1}(j+1)$.
\begin{example}
	Recall that points in $\CQ_3$ are given by pairs of matrices $(A,B) \in \PP(S^2 \CC^3) \times \PP(S^2 \CC^3)$ satisfying $AB=\lambda\cdot I_3$ for some $\lambda \in \CC$. The torus action is given by $$\boldsymbol{t} \cdot (A,B) = (\begin{pmatrix}
	t_1 & 0 & 0 \\ 	0 & t_2 & 0 \\ 0 & 0 & t_3
	\end{pmatrix} \cdot A \cdot \begin{pmatrix}
	t_1 & 0 & 0 \\ 	0 & t_2 & 0 \\ 0 & 0 & t_3
	\end{pmatrix}, \begin{pmatrix}
	t_1^{-1} & 0 & 0 \\ 	0 & t_2^{-1} & 0 \\ 0 & 0 & t_3^{-1}
	\end{pmatrix} \cdot B \cdot \begin{pmatrix}
	t_1^{-1} & 0 & 0 \\ 	0 & t_2^{-1} & 0 \\ 0 & 0 & t_3^{-1}
	\end{pmatrix}).$$
	There are 12 $T$-fixed points, which are listed in \Cref{TheTable}. The corresponding $2$-permutations are also listed: here for instance $13|2$ means the map %
	\begin{align*}
	\{1,2,3\} &\to \{1,2\}\\ 1,3 &\mapsto 1\\ 2 &\mapsto 2.
	\end{align*}
	Next, we fix a $\CC^* \hookrightarrow T$ via $t \mapsto (1,t,t^3)$. Then we have
	$$
	t \cdot (\begin{pmatrix}
	a_{11} & a_{12} & a_{13} \\ a_{12} & a_{22} & a_{23} \\ a_{13} & a_{23} & a_{33}
	\end{pmatrix}, \begin{pmatrix}
	b_{11} & b_{12} & b_{13} \\ b_{12} & b_{22} & b_{23} \\ b_{13} & b_{23} & b_{33}
	\end{pmatrix}) = (\begin{pmatrix}
	a_{11} & ta_{12} & t^3a_{13} \\ ta_{12} & t^2a_{22} & t^4a_{23} \\ t^3a_{13} & t^4a_{23} & t^6a_{33}
	\end{pmatrix}, \begin{pmatrix}
	b_{11} & t^{-1}b_{12} & t^{-3}b_{13} \\ t^{-1}b_{12} & t^{-2}b_{22} & t^{-4}b_{23} \\ t^{-3}b_{13} & t^{-4}b_{23} & t^{-6}b_{33}
	\end{pmatrix}).
	$$
	We can now compute for each point $(A,B) \in \CQ_3$ the limit of the corresponding $\CC^*$, and hence we obtain the Bia{\l}ynicki-Birula cells. See \Cref{TheTable}. Note that the dimensions of these cells agree with the weights in \Cref{def:weights}. See also \cite[Example on p. 247]{Strickland}.
	
	All of the cells admit an affine parametrization. We will illustrate this for the $5$-dimensional cell, which corresponds to $\sigma = 1 | 2 | 3$. Define
	\[
	X= \begin{pmatrix}
	1 &0& 0 \\ x_{12} & 1 & 0 \\ x_{13} & x_{23} & 1
	\end{pmatrix}, \quad
	 Y= \begin{pmatrix}
	 1 &0& 0 \\ 0 & y_1 & 0 \\ 0 & 0 & y_1y_2
	 \end{pmatrix}, \quad
	 \tilde{Y} = \begin{pmatrix}
	 y_1y_2 &0& 0 \\ 0 & y_2 & 0 \\ 0 & 0 & 1
	 \end{pmatrix}.
	\]
	Then the parametrization of the cell is given by
	\begin{align*}
	\left(X \cdot Y \cdot X^T, \adj(X)^T \cdot \tilde{Y} \cdot \adj(X)\right)=(A,B)
	\end{align*}
	where
	\begin{align*}
	A= \begin{pmatrix}
	 1 &  x_{12} &   x_{13} \\
	x_{12} & x_{12}^2+y_1 &     x_{12}x_{13}+x_{23}y_1      \\
	x_{13} & x_{12}x_{13}+x_{23}y_1 &x_{23}^2y_1+x_{13}^2+y_1y_2
	\end{pmatrix}
	\end{align*}
	and
	\begin{align*}
	B=\begin{pmatrix}  x_{12}^2x_{23}^2-2x_{12}x_{13}x_{23}+x_{12}^2y_2+x_{13}^2+y_1y_2 & -x_{12}x_{23}^2+x_{13}x_{23}-x_{12}y_2 & x_{12}x_{23}-x_{13} \\
	-x_{12}x_{23}^2+x_{13}x_{23}-x_{12}y_2 &                   x_{23}^2+y_2 & -x_{23} \\
	x_{12}x_{23}-x_{13}  & -x_{23} &  1 \end{pmatrix}.
	\end{align*}
	For the 3-dimensional cell corresponding to $\sigma=2|13$, we write
	\[
	X= \begin{pmatrix}
	1 &0& 0 \\ x_{12} & 1 & 0 \\ x_{13} & x_{23} & 1
	\end{pmatrix}, \quad
	Y= \begin{pmatrix}
	0 &0& y_1 \\ 0 & 1 & 0 \\ y_1 & 0 & 0
	\end{pmatrix}, \quad
	\tilde{Y} = \begin{pmatrix}
	0 &0& 1 \\ 0 & y_1 & 0 \\ 1 & 0 & 0
	\end{pmatrix}.
	\]
	Then the parametrization of the cell is given by
	\begin{align*}
	\left(X \cdot Y \cdot X^T, \adj(X)^T \cdot \tilde{Y} \cdot \adj(X)\right)=(\begin{pmatrix}
	 0 & 0 &  y_1           \\
	 0 & 1 &  x_{23}          \\
	 y_1 & x_{23} & x_{23}^2+2x_{13}y_1 \\
	\end{pmatrix},\begin{pmatrix}
	 -2x_{13} & -x_{23} & 1 \\
	 -x_{23} & y_1  & 0 \\
	 1  &   0  &  0 \\
	\end{pmatrix}).
	\end{align*}
\begin{table}
	\centering
	\begin{tabular}{|l|l|l|l|l|}
		\hline
		$T$-fixed point & $2$-permutation & Cell & Dimension \\ \hline
		$(\begin{pmatrix}
			1 & 0 & 0 \\ 0& 0& 0 \\ 0 & 0 & 0
		\end{pmatrix}, \begin{pmatrix}
		0 & 0 & 0 \\ 0& 0& 0 \\ 0 & 0 & 1
		\end{pmatrix}) $ & $1|2|3$
		& $(\begin{pmatrix}
			1 & a_{12} & a_{13} \\ a_{12} & a_{22} & a_{23} \\ a_{13} & a_{23} & a_{33}
		\end{pmatrix}, \begin{pmatrix}
			b_{11} & b_{12} & b_{13} \\ b_{12} & b_{22} & b_{23} \\ b_{13} & b_{23} & 1
		\end{pmatrix})$ & 5
		\\ \hline
		$(\begin{pmatrix}
		1 & 0 & 0 \\ 0& 0& 0 \\ 0 & 0 & 0
		\end{pmatrix}, \begin{pmatrix}
		0 & 0 & 0 \\ 0& 1& 0 \\ 0 & 0 & 0
		\end{pmatrix})$ & $1|3|2$
		& $(\begin{pmatrix}
		1 & a_{12} & a_{13} \\ a_{12} & a_{22} & a_{23} \\ a_{13} & a_{23} & a_{33}
		\end{pmatrix}, \begin{pmatrix}
		b_{11} & b_{12} & 0 \\ b_{12} & 1 & 0 \\ 0 & 0 & 0
		\end{pmatrix})$ & 3
		\\ \hline
		$(\begin{pmatrix}
		0 & 0 & 0 \\ 0& 1 & 0 \\ 0 & 0 & 0
		\end{pmatrix}, \begin{pmatrix}
		0 & 0 & 0 \\ 0& 0 & 0 \\ 0 & 0 & 1
		\end{pmatrix})$ & $2|1|3$
		& $(\begin{pmatrix}
		0 & 0 & 0 \\ 0 & 1 & a_{23} \\ 0 & a_{23} & a_{33}
		\end{pmatrix}, \begin{pmatrix}
		b_{11} & b_{12} & b_{13} \\ b_{12} & b_{22} & b_{23} \\ b_{13} & b_{23} & 1
		\end{pmatrix})$ &3
		\\ \hline
		$(\begin{pmatrix}
		0 & 0 & 0 \\ 0& 1& 0 \\ 0 & 0 & 0
		\end{pmatrix}, \begin{pmatrix}
		1 & 0 & 0 \\ 0& 0& 0 \\ 0 & 0 & 0
		\end{pmatrix})$ & $2|3|1$
		& $(\begin{pmatrix}
		0 & 0 & 0 \\ 0 & 1 & a_{23} \\ 0 & a_{23} & a_{33}
		\end{pmatrix}, \begin{pmatrix}
		1 & 0 & 0 \\ 0& 0 & 0 \\ 0 & 0 & 0
		\end{pmatrix})$ & 2
		 \\ \hline
		$(\begin{pmatrix}
		0 & 0 & 0 \\ 0& 0& 0 \\ 0 & 0 & 1
		\end{pmatrix}, \begin{pmatrix}
		0 & 0 & 0 \\ 0& 1& 0 \\ 0 & 0 & 0
		\end{pmatrix})$ & $3|1|2$
		& $(\begin{pmatrix}
		0 & 0 & 0 \\ 0& 0& 0 \\ 0 & 0 & 1
		\end{pmatrix}, \begin{pmatrix}
		b_{11} & b_{12} & 0 \\ b_{12} & 1 & 0 \\ 0 & 0 & 0
		\end{pmatrix}
		)$ & 2
		 \\ \hline
		$(\begin{pmatrix}
		0 & 0 & 0 \\ 0& 0& 0 \\ 0 & 0 & 1
		\end{pmatrix}, \begin{pmatrix}
		1 & 0 & 0 \\ 0& 0& 0 \\ 0 & 0 & 0
		\end{pmatrix})$ & $3|2|1$ &
		$(\begin{pmatrix}
			0 & 0 & 0 \\ 0& 0& 0 \\ 0 & 0 & 1
		\end{pmatrix}, \begin{pmatrix}
			1 & 0 & 0 \\ 0& 0& 0 \\ 0 & 0 & 0
		\end{pmatrix})$ &0
		 \\ \hline
		$(\begin{pmatrix}
		0 & 1 & 0 \\ 1& 0& 0 \\ 0 & 0 & 0
		\end{pmatrix}, \begin{pmatrix}
		0 & 0 & 0 \\ 0& 0& 0 \\ 0 & 0 & 1
		\end{pmatrix})$ & $12|3$ &
		$(\begin{pmatrix}
		0 & 1 & a_{13} \\ 1 & a_{22} & a_{23} \\ a_{13} & a_{23} & a_{33}
		\end{pmatrix}, \begin{pmatrix}
		b_{11} & b_{12} & b_{13} \\ b_{12} & b_{22} & b_{23} \\ b_{13} & b_{23} & 1
		\end{pmatrix})$ &4
		\\ \hline
		$(\begin{pmatrix}
		0 & 0 & 1 \\ 0& 0& 0 \\ 1 & 0 & 0
		\end{pmatrix}, \begin{pmatrix}
		0 & 0 & 0 \\ 0& 1& 0 \\ 0 & 0 & 0
		\end{pmatrix})$ & $13|2$ &
		$(\begin{pmatrix}
		0 & 0 & 1 \\ 0 & 0 & a_{23} \\ 1 & a_{23} & a_{33}
		\end{pmatrix}, \begin{pmatrix}
		b_{11} & b_{12} & 0 \\ b_{12} & 1 & 0 \\ 0 & 0 & 0
		\end{pmatrix})$ & 2
		 \\ \hline
		$(\begin{pmatrix}
		0 & 0 & 0 \\ 0& 0 & 1 \\ 0 & 1 & 0
		\end{pmatrix}, \begin{pmatrix}
		1 & 0 & 0 \\ 0& 0 & 0 \\ 0 & 0 & 0
		\end{pmatrix})$ & $23|1$ &
		$(\begin{pmatrix}
			0 & 0 & 0 \\ 0& 0 & 1 \\ 0 & 1 & a_{33}
		\end{pmatrix}, \begin{pmatrix}
			1 & 0 & 0 \\ 0& 0 & 0 \\ 0 & 0 & 0
		\end{pmatrix})$ &1
		 \\ \hline
		$(\begin{pmatrix}
		1 & 0 & 0 \\ 0& 0& 0 \\ 0 & 0 & 0
		\end{pmatrix}, \begin{pmatrix}
		0 & 0 & 0 \\ 0& 0& 1 \\ 0 & 1 & 0
		\end{pmatrix})$ & $1|23$ &
		 $(\begin{pmatrix}
		1 & a_{12} & a_{13} \\ a_{12} & a_{22} & a_{23} \\ a_{13} & a_{23} & a_{33}
		\end{pmatrix}, \begin{pmatrix}
		b_{11} & b_{12} & b_{13} \\ b_{12} & b_{22} & 1 \\ b_{13} & 1 & 0
		\end{pmatrix})$ & 4\\ \hline
		$(\begin{pmatrix}
		0 & 0 & 0 \\ 0& 1& 0 \\ 0 & 0 & 0
		\end{pmatrix}, \begin{pmatrix}
		0 & 0 & 1 \\ 0& 0& 0 \\ 1 & 0 & 0
		\end{pmatrix})$ & $2|13$ 	&
		 $(\begin{pmatrix}
		0 & 0 & 0 \\ 0 & 1 & a_{23} \\ 0 & a_{23} & a_{33}
		\end{pmatrix}, \begin{pmatrix}
		b_{11} & b_{12} & 1 \\ b_{12} & b_{22} & 0 \\ 1 & 0 & 0
		\end{pmatrix})$ & 3
		 \\ \hline
		$(\begin{pmatrix}
		0 & 0 & 0 \\ 0& 0& 0 \\ 0 & 0 & 1
		\end{pmatrix}, \begin{pmatrix}
		0 & 1 & 0 \\ 1& 0& 0 \\ 0 & 0 & 0
		\end{pmatrix})$ & $3|12$ &
		$(\begin{pmatrix}
			0 & 0 & 0 \\ 0& 0& 0 \\ 0 & 0 & 1
		\end{pmatrix}, \begin{pmatrix}
			b_{11} & 1 & 0 \\ 1& 0& 0 \\ 0 & 0 & 0
		\end{pmatrix})$ & 1 \\ \hline
	\end{tabular}
\vspace{0.4em}
	\caption{Bia{\l}ynicki-Birula decomposition of $\CQ_3$.}
\label{TheTable}
\end{table}
\end{example}

\begin{remark}
Describing the ring structure on $A^\bullet(\CQ_n)$ is considerably more difficult, but can be done. See for instance \cite{DeConciniGoreskyMacPhersonProcesi}.
\end{remark}

 \section{Bodensee Program}\label{sec:Bodensee}

The answers to classical problems in enumerative geometry are simply integers, e.g.~there are \emph{two} plane quadrics that pass through four general points in $\PP^2$ and are tangent to one general line. However, such problems often come in discrete families. Above, we may differ the dimension of the space $\PP^{n-1}$ and the number of points $a$. Note that if we want the answer to be different from zero and infinity, then the number of general tangent hyperplanes needs to be equal to $\binom{n+1}{2}-1-a$. Thus, we may ask about properties of the function $\bar{\phi}(n,a)$ that counts quadrics in $\PP^{n-1}$ that pass through $a$ general points and are tangent to $\binom{n+1}{2}-1-a$ general hyperplanes. 
Note that by \Cref{prop:muGivesNumberOfQuadrics}, we have $\bar{\phi}(n,a)=\mu_a(S^2\CC^n)=\phi(n,a+1)$.

Here, we would like to underline the first shift of interest: instead of asking ``how many" we ask for ``properties of the function that answers the how many question". In the example above, due to the nature of the tools used to compute $\bar{\phi}(n,a)$ it is natural to consider $n$ fixed. Indeed, then we obtain one variety of complete quadrics $\CQ(\CC^{n})$. The Chow ring of this variety encodes all of the numbers $\bar{\phi}(n,a)$. This perspective is classical, but less interesting from our point of view. Indeed, going to the more sophisticated theorems about cohomology rings, we may deduce properties of the numbers $\bar{\phi}(n,\cdot)$, like log-concavity. Still, it is only a finite sequence of integers, for each fixed $n$, as we must have $0\leq a\leq \binom{n+1}{2}-1$.

Inspired, not by algebraic geometry, but by other disciplines, like algebraic statistics, we change the question and ask for properties of $\bar{\phi}(n,a)$, as a function of $n$ for any fixed $a$. This seems a bad idea, as now we do not have one variety on which we may do intersection theory. Instead, the variety changes, when the argument $n$ does. Surprisingly, $\bar{\phi}(\cdot, a)$ is always a polynomial!

Let us present two easy examples.
\begin{example}\label{exm:diag}
Let $\CC^a$ be a general linear subspace of $\CC^n$. What is the degree of the reciprocal variety obtained as the image of $\CC^a$ by the rational map inverting all the coordinates?

We note that after identifying $\CC^n$ with the space of $n\times n$ diagonal matrices, this is precisely the setting of matrix inversion described in \Cref{subs:permutvar,sec:perCohomology}.

The answer is now a function $d(a,n)$, where $1\leq a\leq n$. We have $d(a,n)=\binom{n-1}{a-1}$. Of course, for fixed $n$ this is just a finite sequence of numbers. For fixed $a$ we obtain a polynomial. Clearly, for all $n$ and $a$ we obtain the Pascal triangle. These numbers are also the beta invariants of uniform matroids, see~\cite{sturmfels2010multivariate}.
\end{example}
\begin{example}\label{exm:sym}
	
Passing from diagonal matrices in \Cref{exm:diag} to symmetric matrices we have the following question:

Let $\CC^a$ be a general linear subspace of the space of $n\times n$ symmetric matrices. What is the degree of the variety obtained as the image of $\CC^a$ by the rational map inverting the matrices?

The answer is precisely $\phi(n,a)$, where $\phi$ is the function introduced in Section~\ref{sec:polynomiality} and recalled at the beginning of this section. Evaluating at different  $n$'s and $a$'s we obtain:

\begin{tabular}{llllllllll}
1 & 2 &4& 4 &2 & 1 &0& 0& 0 & $\cdots$\\
1 & 3 &9& 17 &21 & 21 &17& 9 & 3 & $\cdots$\\
1 & 4 &16& 44 &86 & 137 &188& 212 &188& $\cdots$
\end{tabular}

This is exactly \cite[Table 1]{sturmfels2010multivariate}.
\end{example}

With this review, we would like to initiate the \emph{Bodensee program}. Its aim is precisely to understand the functions, that are answers to natural enumerative problems, that come in sequences. In particular, we would like to understand characteristic numbers, as functions of parameters. For a general introduction of chromatic numbers for tensors we refer to the recent article \cite{jaAustin}.

Let us present below a few recent theorems that are in this spirit.
\begin{theorem}[{\cite{REACT}}]
Let $d=5$ or $d\geq 7$ and $b<n(d-2)+3$. The number of degree $d$ (smooth) hypersurfaces in $\PP^n$ tangent to $b$ general hyperplanes and going through $\binom{n+d}{n}-1-b$ general points equals $(n(d-1)^{n-1})^b$.
\end{theorem}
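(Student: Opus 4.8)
The plan is to realise the count as an intersection number on the space $\PP^N$ of degree $d$ hypersurfaces, where $N=\binom{n+d}{n}-1$, and then to show that in the stated range this number receives no correction from degenerate loci, so that it equals the naive B\'ezout value.

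\textbf{Setting up the intersection number.} First I would let $\PP^N=\PP(S^d(\CC^{n+1})^*)$ parametrise degree $d$ hypersurfaces $X\subset\PP^n$. Passing through a general point $q$ is the linear condition $f(q)=0$, i.e.\ a hyperplane $P_q\subset\PP^N$ of class $h$. For a general hyperplane $H\subset\PP^n$, tangency is the condition that $X\cap H$ be singular, which cuts out a hypersurface $T_H\subset\PP^N$. Its degree is computed by restricting to a general pencil and counting its singular members: after restriction to $H\cong\PP^{n-1}$ this is the number of singular elements of a general pencil of degree $d$ forms in $n$ variables, i.e.\ the degree of the discriminant, namely $n(d-1)^{n-1}$. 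Thus $\deg T_H=n(d-1)^{n-1}$, and since $a+b=N$ with $a=\binom{n+d}{n}-1-b$, B\'ezout predicts
\[
\int_{\PP^N}h^{a}\,[T_H]^{b}=\bigl(n(d-1)^{n-1}\bigr)^{b}.
\]

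\textbf{Reducing to a transversality and dimension statement.} The remaining task is to show that this intersection number actually counts the smooth solutions with multiplicity one. I would encode the solutions by the incidence variety $\Psi$ of tuples $(X,p_1,\dots,p_b)$ with $X$ smooth, $p_i\in H_i$, and $T_{p_i}X=H_i$, subject to $X\ni q_1,\dots,q_a$; a direct parameter count (using Euler's relation to see that $p_i\in X$ is automatic) gives $\dim\Psi=N-a-b=0$, and $\Psi\to\PP^N$ is generically injective onto the smooth solutions. Transversality of the $b$ tangency and $a$ point conditions along the smooth locus would follow from Kleiman's theorem applied to the $\mathrm{PGL}_{n+1}$-translates of the $T_{H_i}$ and $P_{q_j}$, exactly as in the proof of \Cref{prop:muGivesNumberOfQuadrics}. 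It then remains to prove that $\bigcap_i T_{H_i}\cap\bigcap_j P_{q_j}$ has no component supported on the singular hypersurfaces.

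\textbf{The main obstacle: excluding degenerate excess.} The crucial and hardest step is controlling the locus of singular hypersurfaces tangent to all the $H_i$. The binding locus is $D_1:=\{X\mid X\text{ is singular along a line}\}$: any line meets any hyperplane, so for every such $X$ and every $H$ the intersection $X\cap H$ is non-reduced, whence $D_1\subseteq\bigcap_H T_H$. Writing ``singular along a fixed line $m$'' as $f\in I_m^2$ and letting $m$ vary over the Grassmannian of lines, one computes $\codim D_1=n(d-2)+3$. Imposing the $a=N-b$ general point conditions therefore leaves a locus of expected dimension $b-n(d-2)-3$, which is negative precisely when $b<n(d-2)+3$; so in this range $D_1$ contributes nothing. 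I would then verify that every other ``totally tangent'' locus---hypersurfaces singular along a curve of higher degree, or along a linear space of dimension $\geq 2$---has strictly larger codimension, and that hypersurfaces singular only at isolated points have expected dimension $-1$ (an isolated singular point lying on $k$ of the $H_i$ forces exactly $k$ of the tangencies while costing $k$ moduli, so the contributions balance). Combining these dimension counts shows the entire excess vanishes, and hence the characteristic number equals $\bigl(n(d-1)^{n-1}\bigr)^{b}$. The degree hypotheses $d=5$ or $d\geq 7$ enter here to rule out the low-degree coincidences (at $d=2,3,4,6$) where an additional degenerate stratum drops to codimension $\leq n(d-2)+3$ or where the transversality above degenerates; these cases would be handled separately, following \cite{REACT}.
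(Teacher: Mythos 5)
The survey does not prove this theorem; it is quoted verbatim from \cite{REACT} in the Bodensee section, so your proposal can only be measured against the cited literature and its own internal soundness. Your skeleton is the right one, and your numerology is correct: $\deg T_H$ is the degree of the discriminant of degree-$d$ forms in $n$ variables, namely $n(d-1)^{n-1}$; and the locus of hypersurfaces singular along a line has codimension $(nd+1)-2(n-1)=n(d-2)+3$ (the $nd+1$ conditions $f\in I_m^2$ minus $\dim G(2,n+1)$), which is contained in every $T_H$ and exactly explains the hypothesis $b<n(d-2)+3$. The Euler-relation observation and the count $\dim\Psi=N-a-b$ are also fine.

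There are, however, two genuine gaps. First, transversality cannot be obtained ``exactly as in the proof of \Cref{prop:muGivesNumberOfQuadrics}'': that argument works because $GL(W)$ acts \emph{transitively} on the smooth locus of the graph, a feature special to $d=2$. For $d\geq 3$ the group $\mathrm{PGL}_{n+1}$ has dimension $(n+1)^2-1\ll N$, acts on $\PP^N$ with moduli, and the $T_{H_i}$ are not general translates inside any homogeneous space, so Kleiman's theorem simply does not apply; multiplicity one at the smooth solutions needs a different mechanism (e.g.\ generic smoothness of the incidence correspondence over $((\PP^n)^{\vee})^b\times(\PP^n)^a$, or a tangent-space computation showing the $b$ tangency conditions and $a$ point conditions impose independent conditions at each smooth solution). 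Note that excluding degenerate excess set-theoretically does not by itself give the equality with the B\'ezout number: one must also rule out excess \emph{multiplicity} at the honest solutions. Second, the core of the theorem --- that the intersection is proper on \emph{every} stratum of the discriminant (isolated singularities of arbitrary type, singular loci that are curves of higher degree or higher-dimensional, non-reduced hypersurfaces such as $f=g^2h$) --- is asserted rather than proved, and the distinguishing hypotheses $d=5$ or $d\geq 7$ are never used except by deferring to \cite{REACT}. Since for instance the formula's failure mode at $d=6$ is exactly what those hypotheses encode, and your argument gives no account of it, what you have established is the correct heuristic and the source of the bound on $b$, not the theorem as stated; the stratified properness analysis and the smooth-locus transversality are precisely where the real work of \cite{REACT} lies.
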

\begin{theorem}[{\cite{MMMSV}}]\label{thm:polygen}
Consider a general $b$-dimensional subspace $L$ of general\slash symmetric\slash skew-symmetric $n\times n$ matrices. Let $f(b,n)$ be the degree of the variety dual to $L\cap(\det=0)$. Then $f(b,\cdot)$ is a polynomial.
\end{theorem}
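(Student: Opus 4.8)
The plan is to reduce the degree of the dual variety, in each of the three cases, to a single top intersection number on the appropriate space of complete forms, and then to run the Grassmannian-pushforward argument already used for \Cref{thm:polyDelta}. First I would fix the dictionary: since $L$ is generic, the two gradient constructions agree and all invariants of $L$ may be computed upstairs, on the resolution $\tilde\Gamma_{\det}$, by \Cref{isgenericof,aresultof}. According to whether we take general, symmetric, or skew matrices, this resolution is $\ComplCol(W)$, $\CQ(W)$, or $\CS(W)$, with $V=W\ot W$, $S^2W$, or $\bigwedge^2W$ and $F=\det$ or $\pf$. The variety $L\cap(\det=0)$ is exactly the corank-$\geq 1$ (in the skew case, corank-$\geq 2$) degeneracy locus inside $\PP(L)$, so the degree of its projective dual is the top polar class of this degeneracy hypersurface.

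Second, I would identify $f(b,n)$ with an algebraic-degree-of-semidefinite-programming number. Exactly as in the definition of $\delta(m,n,r)$ and the remark following \Cref{prop:relations}, that polar class is the integral, against the codimension-one degeneracy divisor $S_r$, of a product of the hyperplane classes $L_1,\ldots,L_{n-1}$; here $r=n-1$ in the general and symmetric cases and $r$ is the top (corank-two) divisor in the skew case. In particular the symmetric case gives literally $f(b,n)=\delta(b,n,n-1)$, so \Cref{thm:polyDelta} applied with $m=b$ and $s=1$ settles it at once (and even shows $f(b,\cdot)$ has degree $b$). The content of the theorem is therefore the remaining two matrix types, where I must build the corresponding machinery by hand.

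Third, for complete collineations and complete skew-forms I would establish the exact analogues of the three ingredients used for complete quadrics: (a) a basis of $\Pic$ by the classes $L_i$ together with the relations expressing each $S_r$ in terms of the $L_i$ (the analogue of \Cref{prop:relations}); (b) Pataki-type vanishing for the corresponding $\delta$; and (c) the pushforward of $\int_{S_r}L_1^{\bullet}L_{n-1}^{\bullet}$ from the degeneracy divisor $S_r$ to the Grassmannian $G(r,n)$. Under this pushforward the relevant $\delta$ becomes an integral over $G(r,n)$ of a product of Segre classes of bundles built from the tautological sub- and quotient bundles $\fU,\fQ$, namely their second exterior power or full tensor square in place of the $S^2\fU,\,S^2\fQ^*$ appearing in the symmetric case. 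Expanding these Segre classes in Schur polynomials yields a formula for $\delta$ in terms of Lascoux-type coefficients, precisely as in \cite{MMMSV,BothmerRanestad}.

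Finally, the crux is to show that for fixed $b$ (and fixed corank) this sum is a polynomial in $n$. As in \Cref{thm:polyDelta} the $n$-dependence enters only through the rank $n-r$ of the quotient bundle and the shape of $G(r,n)$, both of which can be absorbed into binomial and Schur-integral expressions that are manifestly polynomial once a purely combinatorial polynomiality statement for the governing coefficients is in hand. I expect this last combinatorial step to be the main obstacle: the symmetric case rests on the classical polynomiality of the Lascoux coefficients for $S^2\fU$ \cite{LaksovLascouxThorup,Pragacz,BothmerRanestad}, and one must prove the analogue for the $\bigwedge^2\fU$ and $\fU\ot\fU$ expansions. Merely setting up the two additional spaces of complete forms with enough of their intersection theory to phrase this statement, items (a)--(c) above, is most of the remaining work; once phrased, the polynomiality should follow by the same generating-function and Giambelli manipulations as in the complete-quadric case.
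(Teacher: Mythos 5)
Your overall architecture---reduce $f(b,n)$ to top intersection numbers against the degeneration divisors $S_r$ on the appropriate space of complete forms, push forward to Grassmannians, and invoke polynomiality of Lascoux-type coefficients---is exactly the route of \cite{MMMSV}; note that the survey itself offers no proof of \Cref{thm:polygen} beyond this citation and the remark that in the symmetric case $f(b,n)=\sum_r \delta(b,n,r)$, which then follows from \Cref{thm:polyDelta}. That remark exposes the one genuine gap in your proposal: your claim that ``the symmetric case gives literally $f(b,n)=\delta(b,n,n-1)$,'' settled by the single term $s=1$. Once $b$ is large enough for $\PP(L)$ to meet the deeper rank strata (precisely when Pataki's inequalities permit), the hypersurface $L\cap(\det=0)$ is singular along those strata, and the degree in question picks up a component from \emph{every} stratum in the Pataki range: one must take $f(b,n)=\sum_s \delta(b,n,n-s)$, since $\delta(b,n,n-1)$ accounts only for hyperplanes tangent at smooth points. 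A concrete check: for $n=3$ and $\PP(L)=\PP^3$, the section $L\cap(\det=0)$ is the four-nodal cubic symmetroid; the hyperplanes tangent at smooth points form a quartic surface ($\delta(3,3,2)=4$), while the four planes dual to the nodes contribute $\delta(3,3,1)=4$ more, giving $f=8$, consistent with the identity $3\,\phi(3,d)=\delta(d,3,2)+2\,\delta(d,3,1)$ and the values $\phi(3,d)=1,2,4,4,2,1$ from \Cref{exm:sym}. Your parenthetical ``even shows $f(b,\cdot)$ has degree $b$'' survives, but only via the corrected sum.

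The good news is that the gap is repairable without new ideas: by Pataki, the contributing $s$ satisfy $\binom{s+1}{2}\leq b$, a range independent of $n$, so the corrected $f(b,\cdot)$ is a finite sum of the polynomials furnished by \Cref{thm:polyDelta}, and the symmetric case closes as you intended. For the general and skew-symmetric cases, your items (a)--(c)---Picard bases with $S_r$-relations analogous to \Cref{prop:relations}, Pataki-type vanishing, and pushforward from $S_r$ to Grassmannians (for complete collineations the degeneration divisor naturally involves both the image and kernel data, so the base is of product-of-Grassmannians type), followed by Schur expansions of Segre classes of $\bigwedge^2\fU$- and $\fU\ot\fU$-type bundles---do mirror what \cite{MMMSV} carries out, and you are right that the combinatorial polynomiality of the resulting Lascoux coefficients is the crux; but the same sum-over-strata correction is needed there as well (with, as you correctly note, the reduced hypersurface $\{\pf=0\}$ replacing $\{\det=0\}$ in the skew case). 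As written, then, your proposal is the right program with one wrong identification at its only fully explicit step.
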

The formulas for the leading coefficient are provided in \cite{borzi2021leading}. In the case of symmetric matrices, the number $f(b,n)$ is equal to the algebraic degree of semidefinite programming $\sum_r \delta(b,n,r)$ and Theorem~\ref{thm:polygen} agrees with Theorem~\ref{thm:polyDelta}.

Below we present open problems that we consider part of Bodensee Program and find particularly interesting.
\subsection{Open Problems}
\begin{problem} Let $f(d,b,n)$ be the function that counts the number of degree $d$ (smooth) hypersurfaces in $\PP^n$ going through $b$ general points and tangent to $\binom{n+d}{n}-1-b$ general hyperplanes. Describe the function $f(d,b,\cdot)$. Can we do it at least for small $b$?
\end{problem}
For plane cubics, i.e. $d=3$ and $n=2$, the numbers $f(3,b,2)$ were first computed by Maillard~\cite{Maillard} and Zeuthen~\cite{Zeuthen}, and for a more modern treatment, see \cite{AluffiCubics}. For plane quartics, the characteristic numbers $f(4,b,2)$ were computed by Vakil~\cite{VakilCharacteristic}.
\begin{problem}
 Let $\phi(n,b)$ be the function that counts quadrics in $\PP^{n-1}$ that pass through $b$ general points and are tangent to $ \binom{n+1}{2}-1-b$ general hyperplanes. Are the coefficients of the polynomial $\phi(\cdot,b)$ log-concave? Can we provide a better understanding for the evaluations of $\phi(\cdot,b)$ at negative arguments?
 \end{problem}
\begin{problem}
	For any class in $Y \in A^d(\CQ_n)$, we can define a function
	\[
	\phi_Y(n) = \int_{\CQ_n}{Y \cdot L_1^{\binom{n+1}{2}-d-1}}.
	\]
	Is this always a polynomial in $n$, for large enough $n$?
\end{problem}
\begin{problem}
	\Cref{prop:EulerChar} states that the alternating sum of the numbers $\nu_i(L)$ has a geometric interpretation as an Euler characteristic. Is there a similar geometric interpretation of the alternating sum of the $\nu_i(L)$?
\end{problem}
\bibliography{MatroidsBib}{}
\bibliographystyle{plain}
\end{document}